\newcommand{\C}{\mathbb{C}}
\newcommand{\R}{\mathbb{R}}
\newcommand{\Z}{\mathbb{Z}}
\newcommand{\K}{\mathbb{K}}
\newcommand{\Ha}{\mathbb{H}}
\newcommand{\rank}{\operatorname{rank}}
\newcommand{\mfp}{\frak{p}}
\newcommand{\Herm}{\operatorname{Herm}}
\newcommand{\Sym}{\operatorname{Sym}}
\newcommand{\trace}{\operatorname{trace}}
\newcommand{\Ad}{\operatorname{Ad}}
\newcommand{\diag}{\operatorname{diag}}
\newcommand{\A}{{}^\forall}
\newcommand{\trans}{{}^t\!}
\newcommand{\cb}{\makebox[0pt][l]{$\square$}\raisebox{.15ex}{\hspace{0.1em}$\checkmark$}}
\newcommand{\sq}{$\square$}
\newtheorem{theorem}{Theorem}[section]
\newtheorem{lemma}[theorem]{Lemma}
\newtheorem{corollary}[theorem]{Corollary}
\newtheorem{proposition}[theorem]{Proposition}
\newtheorem{definition}[theorem]{Definition}
\newtheorem{problem}[theorem]{Problem}
\newtheorem{remark}[theorem]{Remark}
\newtheorem{fact}[theorem]{Fact}
\newtheorem{openproblem}[theorem]{Open Problem}
\begin{document}



\title[]{Obstructions to the existence of compact Clifford--Klein forms for tangential symmetric spaces}

\address{Graduate School of Mathematical Science, The University of Tokyo, 3-8-1 Komaba, Meguro-ku, Tokyo 153-8914, Japan,\footnote{The present affiliation is RIKEN Center for Advanced Intelligence Project, Tokyo, Japan, koichi.tojo@riken.jp} }
\email{koichi.tojo@riken.jp}
\author{Koichi Tojo}
\subjclass[2010]{Primary 57S30; Secondary 53C35, 53C30}

\keywords{compact Clifford--Klein form, tangential homogeneous spaces, Hurwitz--Radon number}

\maketitle

\begin{abstract}
For a homogeneous space $G/H$ of reductive type, we consider the tangential homogeneous space $G_\theta/H_\theta$. 
In this paper, we give obstructions to the existence of compact Clifford--Klein forms for such tangential symmetric spaces and 
obtain new tangential symmetric spaces which do not admit compact Clifford--Klein forms. 
As a result, in the class of irreducible classical semisimple symmetric spaces, we have only two types of symmetric spaces which are not proved not to admit compact Clifford--Klein forms.  


The existence problem of compact Clifford--Klein forms 
for homogeneous spaces of reductive type, which was initiated by T.~Kobayashi in 1980s, has been studied by various methods but is not completely solved yet. 
On the other hand, the one for tangential homogeneous spaces has been studied since 2000s and an analogous criterion was proved by T.~Kobayashi and T.~Yoshino. 
In concrete examples, further works are needed to verify Kobayashi--Yoshino's condition by direct calculations. 
In this paper, some easy-to-check necessary conditions(=obstructions) for the existence of compact quotients in the tangential setting are given, and they are applied to the case of symmetric spaces. 
The conditions are related to various fields of mathematics such as 
associated pair of symmetric space, 
Calabi--Markus phenomenon, 
trivializability of vector bundle (parallelizability, Pontrjagin class), 
Hurwitz--Radon number and 
Pfister's theorem (the existence problem of common zero points of polynomials of odd degree). 
\end{abstract}

\section{Introduction and main results}

In this paper, we give some obstructions to the existence of compact Clifford--Klein forms of tangential symmetric spaces and 
obtain new examples which do not admit compact Clifford--Klein forms. 

In Section~1, $G$ denotes a Lie group and $H$ denotes a closed subgroup of $G$. 
Geometry of Clifford--Klein forms has been enriched by the following: 
\begin{openproblem}[{\cite[Problem~1.7(2)]{kobayashi96}}]\label{openproblem1}
When does $G/H$ admit compact Clifford--Klein forms?
\end{openproblem}

This is still open 
even if we restrict the problem to irreducible semisimple symmetric spaces (see \S~\ref{setting} for the definition). 
A systematic study was initiated and Open Problem~\ref{openproblem1} was raised 
by T.~Kobayashi in 1980s. 
Most of the results are summarized in the papers \cite{kobayashi96,kob02,kobayoshi}. 

In this paper, we consider Problem~\ref{openproblem1} 
for tangential symmetric spaces $G_\theta/H_\theta$ (see Definition~\ref{def_of_tangential}) of irreducible classical semisimple symmetric spaces $G/H$. 
\begin{problem}[tangential case]\label{problem_tangential}
Classify irreducible classical semisimple symmetric spaces $G/H$ 
with regard to whether or not the tangential symmetric spaces 
$G_\theta/H_\theta$ admit compact Clifford--Klein forms. 
\end{problem}

For Problem~\ref{problem_tangential}, 
the following Facts~\ref{unpublished result}, \ref{intro_adams} and \ref{corollary from L} are known as  partial solutions: 
\begin{fact}[Kobayashi--Yoshino {\cite[Thm.~3]{kobayoshi2}}]\label{unpublished result}
Let $G/H$ be a classical irreducible symmetric space, and $G$ a complex reductive Lie group. 
$G_\theta/H_\theta$ has a compact Clifford--Klein form if and only if 
$G/H$ is locally isomorphic to one of the following list:
\begin{itemize}
\item Riemannian symmetric spaces $G/K$, 
\item Group manifolds $(G\times G)/(\diag_\tau G)$, where we put $\diag_\tau G:=\{(g,\tau(g)): g\in G\}\subset G\times G$ for each involution $\tau$ on $G$, 
\item $SO(8,\C)/SO(7,\C)$, 
\item $SO(8,\C)/SO(7,1)$. 
\end{itemize}
Here, the notion of irreducibility is in the sense of \cite[\S 16, Chapter III]{nomizu}.  
\end{fact}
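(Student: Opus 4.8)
The plan is to reduce the existence question to Kobayashi--Yoshino's algebraic criterion for tangential homogeneous spaces and then read it off the linear isotropy data, which in the complex case becomes especially transparent. Writing $\mathfrak{g}=\mathfrak{k}\oplus\mathfrak{p}$ for the Cartan decomposition and choosing the involution $\sigma$ defining $H$ to commute with $\theta$, the contracted group is $G_\theta=K\ltimes\mathfrak{p}$ with $\mathfrak{p}$ abelian, and $H_\theta=(H\cap K)\ltimes(\mathfrak{h}\cap\mathfrak{p})$. Hence, as a $G_\theta$-manifold, $G_\theta/H_\theta$ is the total space of the homogeneous vector bundle $K\times_{H\cap K}\bigl(\mathfrak{p}/(\mathfrak{h}\cap\mathfrak{p})\bigr)$ over the compact Riemannian symmetric space $K/(H\cap K)$. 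First I would record that a compact Clifford--Klein form is a discrete $\Gamma\subset G_\theta$ acting properly discontinuously, freely and cocompactly; by a syndetic-hull argument this is equivalent to the existence of a connected closed subgroup $L_\theta\subset G_\theta$ acting properly and cocompactly, which Kobayashi--Yoshino translate into a properness condition on the $\mathfrak{a}$-projections of $L_\theta$ and $H_\theta$ together with a matching-of-dimensions condition that, on the bundle above, amounts to a triviality (parallelizability) requirement for the fibre over $K/(H\cap K)$.

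Next I would exploit that $G$ is complex: then $K=U$ is a compact real form and $\mathfrak{p}=i\,\mathfrak{u}$, so the isotropy representation on $\mathfrak{p}$ is the adjoint representation of $U$, and both the base $K/(H\cap K)$ and the fibre are computed directly from $\sigma$. Running through the classification of irreducible classical complex symmetric spaces, the properness condition already eliminates the families of higher real rank by a Calabi--Markus type count, while the remaining borderline families are decided by the triviality condition. For the positive direction the two large families are immediate: when $H=K=U$ the space is the vector space $i\,\mathfrak{u}$ and a lattice yields a torus quotient, and for the group manifolds $(G\times G)/\diag_\tau G$ the underlying complex Lie group is parallelizable, so the bundle is trivial and a cocompact lattice produces a compact form.

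The decisive cases are $SO(8,\C)/SO(7,\C)$ and $SO(8,\C)/SO(7,1)$. In both one finds $K/(H\cap K)=SO(8)/SO(7)=S^7$ with fibre the standard $7$-dimensional $SO(7)$-module, so that $G_\theta/H_\theta$ is exactly the tangent bundle $TS^{7}$ (the two cases, one from a holomorphic and one from an antiholomorphic involution, being interchanged by the outer triality automorphism of $\mathfrak{so}(8)$). The point is that $S^{7}$ is parallelizable, so $TS^{7}\cong S^{7}\times\R^{7}$ is trivial and one can build an explicit complementary proper cocompact $L_\theta$ by feeding the triality realisation of $\mathfrak{so}(7)\hookrightarrow\mathfrak{so}(8)$ into the translation directions; this is precisely the input governed by the Hurwitz--Radon number and by Adams' theorem on linearly independent vector fields on spheres. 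Conversely, for $SO(n,\C)/SO(n-1,\C)$ one has $G_\theta/H_\theta\cong TS^{n-1}$, which fails to be trivial whenever $n-1\notin\{1,3,7\}$; the low values $n-1\in\{1,3\}$ degenerate into abelian or non-irreducible cases (or reappear as group manifolds), so among genuinely irreducible classical spaces of this type only $n=8$ survives, and the analogous non-parallelizability obstruction disposes of the remaining classical families.

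The main obstacle is the $SO(8)$ pair: one must check that triality genuinely yields a subgroup of $G_\theta$ acting properly \emph{and} cocompactly, not merely that the bundle $TS^7$ is topologically trivial, and one must confirm that no sporadic family escapes the combination of the rank (Calabi--Markus) test and the parallelizability test. The proof therefore hinges on matching Kobayashi--Yoshino's criterion exactly against Adams' vector-field bound, so that the arithmetic of the Hurwitz--Radon number singles out $n-1=7$ and hence the two exceptional $SO(8,\C)$ spaces.
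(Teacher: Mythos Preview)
The paper does not give its own proof of this statement; it is cited as a result of Kobayashi--Yoshino from the unpublished manuscript~\cite{kobayoshi2}. What the paper \emph{does} provide is a collection of obstructions (Sections~3--7) together with the existence list in Table~1, which, when restricted to the complex case, assemble into a proof of the Fact. So the relevant comparison is between your outline and this scattered argument.

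Your overall architecture---reduce to Fact~\ref{yoshino's criterion}, treat the positive cases via Table~1, and use Adams' theorem to isolate $S^7$ among the spheres $SO(n,\C)/SO(n-1,\C)$---is correct and matches the paper. However, your claim that ``the properness condition already eliminates the families of higher real rank by a Calabi--Markus type count, while the remaining borderline families are decided by the triviality condition'' is a genuine gap. Two complex families escape both tests:
\begin{itemize}
\item $(SL(2n,\C),Sp(n,\C))$ and its associate $(SL(2n,\C),SU^*(2n))$ have $\rank_\R G=2n-1>n=\rank_\R H$ (resp.\ $n-1$), so the Calabi--Markus obstruction (Fact~\ref{Calabi-Markus}) does not apply to either member of the associated pair. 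The paper handles this with Pfister's theorem (Theorem~\ref{application_of_pfister}, Proposition~\ref{example_pfister}), which you do not invoke.
\item $(SO(2p+2q,\C),SO(2p-1,\C)\times SO(2q-1,\C))$ with $p,q\geq 2$ also has a rank drop of one, so Calabi--Markus fails; nor is the bundle obstruction used here. The paper uses Kobayashi's comparison theorem (Fact~\ref{corollary_of_kob92}, Proposition~\ref{example_maximality}) by producing a reductive $H'$ with $\mathfrak{a}_{H'}\subset W_G\cdot\mathfrak{a}_H$ and $d(H')>d(H)$.
\end{itemize}
Without these two additional obstructions your case analysis is incomplete.

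Two smaller points. First, bundle triviality (Fact~\ref{non-triviality}) is only a \emph{necessary} condition for existence, not equivalent to Fact~\ref{yoshino's criterion}; you should not phrase the criterion as ``amounts to'' triviality. Second, the two $SO(8,\C)$ spaces are related as an \emph{associated pair} in the sense of Definition~\ref{def of associated pair} (so Proposition~\ref{associated duality} transports existence from one to the other); triality enters only in exhibiting the proper cocompact $L$ (namely $Spin(1,7)$ and $Spin(7,\C)$ in Table~1), not in identifying the two homogeneous spaces with each other.
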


\begin{fact}[Kobayashi--Yoshino {\cite[Prop.~5.5.1]{kobayoshi}}]\label{intro_adams}
The following conditions on the pair $(p,q)$ of positive integers are equivalent:
\begin{enumerate}
\item The tangential symmetric space of $SO_0(p,q+1)/SO_0(p,q)$ admits a compact Clifford--Klein form.
\item $q<\rho(p,\R)$. 
\end{enumerate}
Here, $\rho(p,\R)$ is the Hurwitz--Radon number (see Definition~\ref{Adams thm}). 
\end{fact}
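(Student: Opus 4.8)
The plan is to make the tangential space concrete, run the tangential analogue of Kobayashi's properness criterion, and thereby reduce the existence of a compact Clifford--Klein form to the existence of a Clifford-module structure on $\R^p$, which the Hurwitz--Radon theorem (Definition~\ref{Adams thm}) controls exactly. Let $\theta$ be the Cartan involution of $G=SO_0(p,q+1)$, with $K=SO(p)\times SO(q+1)$ and Cartan decomposition $\mathfrak{g}=\mathfrak{k}\oplus\mathfrak{p}$, where $\mathfrak{p}\cong M_{p,q+1}(\R)$ and $K$ acts by $(k_1,k_2)\cdot B=k_1Bk_2^{-1}$. Taking $H=SO_0(p,q)$ to be $\theta$-stable gives $H\cap K=SO(p)\times SO(q)$ and $\mathfrak{h}\cap\mathfrak{p}\cong M_{p,q}(\R)$ (the first $q$ columns). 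Following Definition~\ref{def_of_tangential}, $G_\theta=K\ltimes\mathfrak{p}$ and $H_\theta=(H\cap K)\ltimes(\mathfrak{h}\cap\mathfrak{p})$, and I would first record that $G_\theta/H_\theta$ is a flat pseudo-Riemannian manifold of dimension $p+q$ fibering over $K/(H\cap K)=S^q$ with fibre $\mathfrak{p}/(\mathfrak{h}\cap\mathfrak{p})\cong\R^p$. Since the $K$-orbit of a point of $\mathfrak{p}$ is recorded by its singular values, I would write $a\colon G_\theta\to\mathfrak{a}_+$ for the resulting Cartan projection, sending $(k,B)$ to the ordered tuple of singular values of $B$.

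Next I would invoke the tangential analogue of Kobayashi's properness criterion in the Kobayashi--Yoshino framework: a discrete $\Gamma\subset G_\theta$ acts properly discontinuously on $G_\theta/H_\theta$ if and only if $a(\Gamma)$ is properly separated from $a(H_\theta)$. Because the $\mathfrak{p}$-parts of $H_\theta$ are supported on the first $q$ columns, $a(H_\theta)$ is the explicit subcone of tuples with at most $\min(p,q)$ nonzero entries, so properness forces the translation parts of $\Gamma$ to grow in the ``last column'' direction while being twisted by the linear parts in $K$. Cocompactness then forces $\Gamma$ to be, up to finite index, a crystallographic-type group of rank $p+q$. Combining the two and normalizing the linear parts, I would extract from $\Gamma$ a family of $q$ orthogonal transformations $J_1,\dots,J_q$ of $\R^p$ coming from the $SO(q+1)$-twists needed to push $a(\gamma)$ off $a(H_\theta)$; the escape-and-tiling conditions translate precisely into $J_i^2=-I$ and $J_iJ_j=-J_jJ_i$ for $i\neq j$, i.e.\ a representation of the Clifford algebra with $q$ anticommuting orthogonal complex structures on $\R^p$.

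The Hurwitz--Radon theorem (Definition~\ref{Adams thm}) then closes the equivalence: $\R^p$ carries $q$ mutually anticommuting orthogonal complex structures if and only if $q\le\rho(p,\R)-1$, that is $q<\rho(p,\R)$, equivalently there is a nonsingular normed bilinear map $\R^{q+1}\times\R^p\to\R^p$. This yields (ii)$\Rightarrow$(i): given such $J_1,\dots,J_q$ when $q<\rho(p,\R)$, I would build $\Gamma$ from $p$ pure translations in the fibre $\R^p$ together with $q$ elements whose linear parts rotate by the $J_i$ and whose translation parts move along the base, and then check directly that $a(\Gamma)$ escapes $a(H_\theta)$ (properness), that $\Gamma$ is torsion-free (freeness), and that the quotient is compact of dimension $p+q$.

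The genuinely hard direction is (i)$\Rightarrow$(ii), that an \emph{arbitrary} compact Clifford--Klein form forces $q<\rho(p,\R)$. There are two difficulties. First, a proper cocompact $\Gamma$ need not be presented in the clean form above, so a Bieberbach-type rigidity and normalization step is required to conjugate its linear parts into $K$ and show they realize the anticommuting system $J_1,\dots,J_q$; this is where I expect the main obstacle to lie. Second, the nonexistence of $q$ anticommuting orthogonal complex structures on $\R^p$ when $q\ge\rho(p,\R)$ is exactly Adams' theorem on vector fields on $S^{p-1}$, so that deep topological input is unavoidable. I anticipate that the normalization step, rather than the Hurwitz--Radon bookkeeping, will be the crux of the argument.
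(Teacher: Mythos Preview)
Your proposal takes an unnecessary detour that creates exactly the gap you yourself flag. The paper does not reprove this Fact (it is quoted from Kobayashi--Yoshino), but it does state the tool that makes the argument clean: Fact~\ref{yoshino's criterion}. That criterion replaces \emph{every} question about a discrete $\Gamma\subset G_\theta$ by a purely linear one: $G_\theta/H_\theta$ has a compact Clifford--Klein form iff there is a subspace $V\subset\mathfrak p$ with $\dim V=d(G)-d(H)$ and $V\cap\Ad(K)\mathfrak p_H=\{0\}$. No Bieberbach-type rigidity, no normalization of an arbitrary cocompact lattice, is required; all of that is already packaged inside Fact~\ref{yoshino's criterion}. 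You invoke the properness half of the Kobayashi--Yoshino framework but then ignore its subspace reformulation, which is precisely what eliminates the step you call the ``main obstacle.''

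Concretely, for $(G,H)=(SO_0(p,q+1),SO_0(p,q))$ one has $\mathfrak p\cong M_{p,q+1}(\R)$, $d(G)-d(H)=p$, and $\Ad(K)\mathfrak p_H$ is the set of $p\times(q+1)$ matrices of rank at most $q$ (a matrix lies in the $SO(p)\times SO(q+1)$-orbit of one with vanishing last column iff it annihilates some nonzero vector in $\R^{q+1}$). Thus the criterion asks for a $p$-dimensional subspace $V\subset M_{p,q+1}(\R)$ whose nonzero elements all have full column rank $q+1$. Parametrizing $V$ by $\R^p$ gives a nonsingular bilinear map $\R^p\times\R^{q+1}\to\R^p$; fixing the second variable then produces a $(q+1)$-dimensional linear subspace of $M_{p,p}(\R)$ consisting (apart from $0$) of invertible matrices, and conversely any such subspace yields a suitable $V$. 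By Hurwitz--Radon (construction) and Adams/Adams--Lax--Phillips (upper bound) such a subspace exists iff $q+1\le\rho(p,\R)$, i.e.\ $q<\rho(p,\R)$. Both directions come out at once; Adams' theorem is indeed the deep ingredient, but the extraction of anticommuting $J_i$ from an arbitrary $\Gamma$ that you worry about never has to be carried out.
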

The main parts of the above two facts are non-existence results for tangential symmetric spaces. 
On the other hand, existence results are also known. 
To state it, 
we introduce the notion of standard Clifford--Klein form: 
\begin{definition}[Kassel--Kobayashi {\cite[Def.~1.4]{koba-kassel}}]
Let $G$ be a linear reductive Lie group.  
A Clifford--Klein form $\Gamma\backslash G/H$ of $G/H$ is \emph{standard} 
if $\Gamma$ is contained in some reductive subgroup $L$ of $G$ acting properly on $G/H$. 
\end{definition}

\begin{remark}\label{stand_and_tang}
If there exists a standard compact Clifford--Klein form of $G/H$ of reductive type, 
then its tangential homogeneous space $G_\theta/H_\theta$ admits a compact Clifford--Klein form. 
That is, non-existence of compact Clifford--Klein forms of $G_\theta/H_\theta$ implies non-existence of standard compact Clifford--Klein forms of homogeneous space $G/H$ of reductive type. 
See Section~\ref{sec:preliminary} and {\cite[Sect. 5]{kobayoshi}} for more details. 
\end{remark}

\begin{remark}
Any tangential symmetric spaces associated with Riemannian symmetric spaces $G/K$ or group manifolds $(G\times G)/(\diag_\tau G)$ admit standard compact Clifford--Klein forms. 
Therefore, for Problem~\ref{problem_tangential}, we focus on the case where $G$ is simple and $H$ is not compact. 
\end{remark}

\begin{fact}[Kobayashi--Yoshino {\cite[Cor.~3.3.7]{kobayoshi}}]\label{corollary from L}
Let $(G, H)$ be a symmetric pair which is locally isomorphic to one in Table~1 and suppose that $G$ is connected. 
Then the tangential symmetric space $G_\theta/H_\theta$ of the symmetric space $G/H$ admits compact Clifford--Klein forms. 
\begin{center}
\stepcounter{table}
Table \thetable : Symmetric pairs $(G,H)$ which admit standard compact Clifford--Klein forms. 
  \scalebox{0.87}{
	\begin{tabular}{c|c|c||c|c|c}
 $G$ & $H$ & $L$ & $G$ & $H$ & $L$\\
\hline
  $SO_0(2,2n)$&$SO_0(1,2n)$ & $U(1,n)$&
  $SU(2,2n)$&$U(1,2n)$ & $Sp(1,n)$\\
\hline
 $SO_0(4,4n)$&$SO_0(3,4n)$ & $Sp(1,n)$&
 $SU(2,2n)$&$Sp(1,n)$ & $U(1,2n)$\\
\hline
 $SO_0(4,4)$&$SO_0(4,1)\times SO(3)$ & $Spin(4,3)$&
 $SO(8,\C)$&$SO(7,\C)$ & $Spin(1,7)$\\
\hline
 $SO(4,3)_0$&$SO_0(4,1)\times SO(2)$ & $G_{2(2)}$&
 $SO(8,\C)$&$SO(7,1)$ & $Spin(7,\C)$\\
\hline
 $SO_0(8,8)$&$SO_0(7,8)$ & $Spin(1,8)$&
 $SO^*(8)$&$SO^*(6)\times SO^*(2) $ & $Spin(1,6)$\\
\hline
 $SO_0(2,2n)$&$U(1,n)$ & $SO_0(1,2n)$&
 $SO^*(8)$&$U(3,1)$ & $Spin(1,6)$\\
	\end{tabular}
  }
\end{center}
Here, $L$ is a reductive subgroup of $G$ acting on $G/H$ properly and cocompactly. 
\end{fact}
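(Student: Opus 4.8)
The plan is to reduce the assertion to the existence of a \emph{standard} compact Clifford--Klein form of $G/H$ itself and then transfer it to the tangential space via Remark~\ref{stand_and_tang}. Concretely, for each row $(G,H,L)$ of Table~1 I would produce a torsion-free cocompact lattice $\Gamma\subset L$ acting properly discontinuously, freely, and cocompactly on $G/H$; the quotient $\Gamma\backslash G/H$ is then a standard compact Clifford--Klein form of $G/H$, whence Remark~\ref{stand_and_tang} yields at once a compact Clifford--Klein form of $G_\theta/H_\theta$. Since $L$ is asserted to be a reductive subgroup acting properly and cocompactly, the only points needing verification are (a) that such an $L$ does act properly and cocompactly, and (b) that a suitable $\Gamma$ exists; the latter is furnished by Borel's theorem on the existence of cocompact lattices in any linear reductive Lie group, after passing to a torsion-free finite-index subgroup by Selberg's lemma (properness makes each stabilizer $\Gamma\cap gHg^{-1}$ finite, so torsion-freeness upgrades proper discontinuity to a free action).

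For step (a) I would invoke Kobayashi's criteria for reductive triples. Fix a Cartan involution of $G$ compatible with $H$ and $L$, a maximal split abelian subspace $\mathfrak{a}\subset\mathfrak{g}$, and the little Weyl group $W$; write $\mathfrak{a}_H,\mathfrak{a}_L$ for the split parts of $\mathfrak{h},\mathfrak{l}$ realized inside $\mathfrak{a}$. Properness of the $L$-action on $G/H$ is then equivalent to the transversality condition $\mathfrak{a}_L\cap w\cdot\mathfrak{a}_H=\{0\}$ for every $w\in W$. Given properness, cocompactness is equivalent to the numerical identity $d(L)+d(H)=d(G)$, where $d(\,\cdot\,)=\dim(\,\cdot\,)-\dim K_{(\cdot)}$ denotes the dimension of the associated Riemannian symmetric space. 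Thus the whole verification reduces to two finite computations per row: a comparison of the split tori $\mathfrak{a}_L,\mathfrak{a}_H$ inside $\mathfrak{a}$ under $W$, and an arithmetic check of the dimension equality.

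For the classical families (e.g.\ $SO_0(2,2n)\supset U(1,n)$ in both directions, $SO_0(4,4n)\supset Sp(1,n)$, $SU(2,2n)\supset Sp(1,n)$ and $SU(2,2n)\supset U(1,2n)$) these computations are routine: the embeddings are explicit, $\mathfrak{a}_H$ and $\mathfrak{a}_L$ sit inside an $\mathfrak{a}\cong\R$ or $\R^2$, and both transversality and the real-rank bookkeeping follow from the standard restricted-root data of $\mathfrak{so}(p,q)$, $\mathfrak{su}(p,q)$, $\mathfrak{sp}(p,q)$. I expect the genuine obstacle to be the rows built from triality and exceptional isogenies — $SO_0(4,4)\supset Spin(4,3)$, $SO_0(8,8)\supset Spin(1,8)$, $SO(4,3)_0\supset G_{2(2)}$, $SO^*(8)\supset Spin(1,6)$, and the complex rows $SO(8,\C)\supset Spin(1,7),Spin(7,\C)$ — where $L$ is not a standard block subgroup of $G$. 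For these I would first make the embedding $\mathfrak{l}\hookrightarrow\mathfrak{g}$ explicit through the spin representation and the octonionic model (using triality for the $D_4$ cases, and the $7$-dimensional representation of $G_{2(2)}$ for the $B_3$ case), and then track precisely how $\mathfrak{a}_L$ lands inside $\mathfrak{a}$ so as to confirm $\mathfrak{a}_L\cap W\mathfrak{a}_H=\{0\}$. It is this transversality check for the non-split embeddings, rather than the dimension count, that is the most delicate, and it is exactly where the triality symmetry of $D_4$ must be used in an essential way.
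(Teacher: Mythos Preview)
Your plan is correct, but it makes an unnecessary detour through discrete groups. The paper treats this statement as a cited Fact from Kobayashi--Yoshino, and the mechanism it isolates (Remark~\ref{stand_and_tang}, justified in Section~\ref{sec:preliminary}) is more direct than your outline: once one knows that $L$ acts properly and cocompactly on $G/H$, one simply takes $V=\mathfrak{p}_L$ in the Kobayashi--Yoshino criterion (Fact~\ref{yoshino's criterion}). Properness of $L$ on $G/H$ is, by Fact~\ref{criterion_of_properness}, exactly the condition $\mathfrak{p}_L\cap\Ad(K)\mathfrak{p}_H=\{0\}$, which is condition~(a)$'$; cocompactness is, by Fact~\ref{criterion_of_cocompactness}, the dimension identity $d(L)+d(H)=d(G)$, which is condition~(b). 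So no lattice $\Gamma$, no Borel theorem, and no Selberg lemma are needed: the existence of a compact Clifford--Klein form of $G_\theta/H_\theta$ follows immediately from the two reductive-level conditions you set out to verify in your step~(a). Your step~(b) is redundant.

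As for step~(a) itself, your description of how to verify it (transversality of $\mathfrak{a}_L$ and $W\cdot\mathfrak{a}_H$ plus the dimension count) is correct, and your identification of the triality/exceptional rows as the delicate cases is apt. The paper does not reproduce these checks; it simply cites \cite[Cor.~3.3.7]{kobayoshi} for the list in Table~1. So in effect you are proposing to redo part of the cited reference, which is fine, but not something the present paper undertakes.
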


\begin{remark}
For a symmetric pair $(G,H)$, 
both of implications between the conditions 
``Existence of compact Clifford--Klein forms for $G/H$'' and 
the condition ``Existence of compact Clifford--Klein forms for $G_\theta/H_\theta$'' 
have not been proved.
\end{remark}

In this paper, we give new examples which do not admit compact Clifford--Klein forms in the class of tangential symmetric spaces of irreducible semisimple symmetric spaces. 
To show the non-existence of compact Clifford--Klein forms of tangential symmetric spaces, 
it is enough to consider symmetric spaces up to associated pairs. 
This is one of the reasons why Problem~\ref{problem_tangential} is easier to deal with than the case where $G/H$ is of reductive type. 
We see it in Proposition~\ref{associated duality} in the next section.


We use the following five methods to give necessary conditions for the existence of compact Clifford--Klein forms of tangential symmetric spaces 
(Fact~\ref{Calabi-Markus}, Theorem~\ref{application_of_pfister}, Facts \ref{corollary_of_kob92} and \ref{non-triviality}): 
\begin{enumerate}
\item Calabi--Markus phenomenon \cite{koba:reductive,kobayoshi}, 
\item applications of Pfister's theorem, 
\item maximality of non-compactness \cite{koba:reductive,kob92,kobayoshi,kobayoshi2},
\item non-triviality of homogeneous spaces of reductive type as vector bundles \cite{kobayoshi2},
\item applications of Adams' theorem \cite{kobayoshi}. 
\end{enumerate}

The following tangential symmetric spaces $G_\theta/H_\theta$ are typical examples which are proved not to admit compact Clifford--Klein forms by each method: 
\begin{enumerate}
\item $SL(n,\C)_\theta/SL(n,\R)_\theta$ ($n\geq 2$), 
\item $SL(2n,\R)_\theta/Sp(n,\R)_\theta$ ($n\geq 2$), 
\item $SO_0(p_1+p_2,q_1+q_2)_\theta/(SO_0(p_1,q_1)\times SO_0(p_2,q_2))_\theta$ $(0<p_1\leq p_2,q_1,q_2)$, 
\item $SO_0(2p,2q)_\theta/U(p,q)_\theta$ ($2\leq p\leq q$), 
\item $SU(p,2)_\theta/U(p,1)_\theta$ ($p$ is odd). 
\end{enumerate}
Lists of non-existence results obtained by each method shall be given in Sections~\ref{section:CM}, \ref{section:Pfister}, \ref{section:maximality}, \ref{section:non-triviality} and \ref{section:adams}, respectively.  


\begin{theorem}\label{main_theorem1}
Let $(G, H)$ be a symmetric pair which is locally isomorphic to one in Tables~2 and 3 and suppose that $G$ is connected. 
Then the tangential symmetric space $G_\theta/H_\theta$ does not admit compact Clifford--Klein forms: \\
\begin{center}
\stepcounter{table}
Table \thetable : 
Symmetric pairs of which tangential symmetric spaces 
$G_\theta/H_\theta$ do not admit compact Clifford--Klein forms. \\
\begin{tabular}{c|c||c|c}
$G$ & $H$ & $G$ & $H$\\
\hline
$SL(p+q,\C)$&$S(GL(p,\C)\times GL(q,\C))$&$Sp(p+q,\C)$&$Sp(p,\C)\times Sp(q,\C)$   \\
$(p,q\geq 1)$&$SU(p,q)$                  &$(p,q\geq 1)$  &$Sp(p,q)$                   \\
\hline
$SL(n,\C)$  &$SL(n,\R)$                  &$Sp(n,\C)$  &$Sp(n,\R)$                  \\
$(n\geq 2)$  &$SO(n,\C)$                 & $(n\geq 1)$&$GL(n,\C)$                  \\
\hline
$SL(p+q,\R)$&$S(GL(p,\R)\times GL(q,\R))$&$Sp(p+q,\R)$&$Sp(p,\R)\times Sp(q,\R)$   \\
$(p,q\geq 1)$&$SO_0(p,q)$                 &$(p,q\geq 1)$&$U(p,q)$                    \\
\hline
$SU(p,q)$   &$SO_0(p,q)$                 &$Sp(p,q)$   &$U(p,q)$                    \\
$(p,q\geq 1)$ &                          &$(p,q\geq 1)$&                           \\
\hline
$SU(n,n)$   &$SL(n,\C)\times \R^\times$            &$SU(n,n)$   &$Sp(n,\R)$                  \\
$(n\geq 1)$ &                            &$(n\geq 2)$&$SO^*(2n)$                  \\
\hline
$SU^*(2n)$  &$SU^*(2n)\cap GL(2n,\R)$                 &$SO(2n,\C)$ &$GL(n,\C)$                  \\
$(n\geq 2)$ &$SO^*(2n)$                  &$(n\geq 2)$ &$SO^*(2n)$                  \\
\hline
$SU^*(2(p+q))$&$S(U^*(2p)\times U^*(2q))$&$Sp(n,\R)$  &$GL(n,\R)$                  \\
$(p,q\geq 1)$&$Sp(p,q)$                  &$(n\geq 1)$ &                            \\
\hline
$SO_0(n,n)$ &$GL(n,\R)$                  &$Sp(n,n)$   &$U^*(2n)$                   \\
$(n\geq 1)$ &$SO(n,\C)$                  &$(n\geq 1)$ &$Sp(n,\C)$                  \\
\hline
$SO^*(2n)$  &$SO(n,\C)$                  &$SO^*(4n)$  &$U^*(2n)$                   \\
$(n\geq 2)$ &                            &$(n\geq 1)$ &            
\end{tabular}
\end{center}

\newpage
\begin{center}
\stepcounter{table}
Table \thetable : 
Symmetric pairs $(G, H)$ of which tangential symmetric spaces $G_\theta/H_\theta$ do not admit compact Clifford--Klein forms. \\
\begin{tabular}{c|c|c}
$G$ & $H$ & condition \\
\hline
$SO^*(2(p+q))$ & $SO^*(2p)\times SO^*(2q)$  & $p\geq 2$ or $(q\neq 1$ and $q\neq 3)$\\
 & $U(p,q)$ $(1\leq p\leq q)$ & \\
\hline
$SO(p+q,\C)$ & $SO(p,\C)\times SO(q,\C)$  & $(p,q)\neq (1,1),(1,3),(1,7)$\\
 & $SO_0(p,q)$ $(1\leq p\leq q)$&\\
\hline
$SO_0(p,q)$ & $SO_0(p_1,q_1)\times SO_0(p_2,q_2)$  & $p_1\geq 1$ or $(q_1\geq 2$ and $q_2\geq 2)$ \\
&$(0\leq p_1\leq p_2, q_1, q_2\geq 1)$ & \\
\hline
$SU(p,q)$ & $S(U(p_1,q_1)\times U(p_2,q_2))$ & $p_1\geq 1$ or $q_1\geq 2$ or $q_2\geq 2$ or $p_2$ is odd. \\
&$(0\leq p_1\leq p_2, q_1, q_2\geq 1)$ &\\
\hline
$Sp(p,q)$ & $Sp(p_1,q_1)\times Sp(p_1,p_2)$  & $0\leq p_1\leq p_2, q_1, q_2\geq 1$\\
 &$(0\leq p_1\leq p_2, q_1, q_2\geq 1)$ & \\
\hline
$SL(2n,\C)$ & $Sp(n,\C)$ $(n\geq 2)$& $n\geq 2$ \\
            & $SU^*(2n)$ &\\
\hline
$SL(2n,\R)$ & $Sp(n,\R)$ $(n\geq 2)$ & $n\geq 2$ \\
 & $SU^*(2n)\cap GL(2n,\R)$ &\\
\hline
$SO_0(2p,2q)$ & $U(p,q)$ $(1\leq p\leq q)$ & $p\geq 2$
\end{tabular}
\end{center}

Here, for the three cases $SO_0(p,q)$, $SU(p,q)$ and $Sp(p,q)$, $p=p_1+p_2$ and $q=q_1+q_2$. 
\end{theorem}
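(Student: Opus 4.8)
The plan is to prove Theorem~\ref{main_theorem1} as a case-by-case dispatch: every symmetric pair $(G,H)$ appearing in Tables~2 and 3 will be shown to violate at least one of the five necessary conditions for the existence of a compact Clifford--Klein form of the tangential space $G_\theta/H_\theta$, namely the Calabi--Markus obstruction (Fact~\ref{Calabi-Markus}), the Pfister-type obstruction (Theorem~\ref{application_of_pfister}), the maximality-of-noncompactness obstruction (Fact~\ref{corollary_of_kob92}), the vector-bundle non-triviality obstruction (Fact~\ref{non-triviality}), and the Adams/Hurwitz--Radon obstruction built from Fact~\ref{intro_adams}. Each such violation immediately yields non-existence of a compact Clifford--Klein form, so it suffices to route each tabulated family to whichever of the five obstructions applies.

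The first step is to cut the problem down using the associated-pair duality of Proposition~\ref{associated duality}: since non-existence of compact Clifford--Klein forms for the tangential space is invariant under passage to the associated symmetric space, the two rows (in Table~2) or the two choices of $H$ (in Table~3) listed under each $G$ are \emph{associated} pairs, and it suffices to treat one representative from each associated class. This essentially halves the number of cases and explains why the tables are organized in stacked pairs; I would first record, for each $G$, that the two listed subgroups (for instance $S(GL(p,\C)\times GL(q,\C))$ and $SU(p,q)$ inside $SL(p+q,\C)$, or $SL(n,\R)$ and $SO(n,\C)$ inside $SL(n,\C)$) are genuinely an associated pair at the level of the infinitesimal data of $(G_\theta,H_\theta)$.

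The second step is the dispatch and the bookkeeping. For each associated class I would identify the governing obstruction and reduce the verification to the model example announced in the introduction: the Calabi--Markus cases reduce to the pattern of $SL(n,\C)_\theta/SL(n,\R)_\theta$; the rank/maximality cases to $SO_0(p_1+p_2,q_1+q_2)_\theta/(SO_0(p_1,q_1)\times SO_0(p_2,q_2))_\theta$; the Pfister cases to $SL(2n,\R)_\theta/Sp(n,\R)_\theta$; the vector-bundle cases to $SO_0(2p,2q)_\theta/U(p,q)_\theta$; and the Hurwitz--Radon cases to $SU(p,2)_\theta/U(p,1)_\theta$ with $p$ odd. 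For each family I would then compute the single numerical invariant that the relevant obstruction measures---a dimension inequality, a comparison of real ranks, a non-vanishing Pontrjagin class, or an inequality against $\rho(p,\R)$---and confirm that it lands on the non-existence side precisely when the condition column of Table~3 holds.

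The main obstacle I expect is completeness and sharpness at the boundary rather than any single computation. The condition columns in Table~3 carve out exactly the parameter ranges where an obstruction fires, and the excluded small cases---such as $(p,q)\in\{(1,1),(1,3),(1,7)\}$ for $SO(p+q,\C)/SO_0(p,q)$, or $q\in\{1,3\}$ for the $SO^*$ family---are precisely those that \emph{do} admit compact Clifford--Klein forms by Fact~\ref{corollary from L} and Fact~\ref{unpublished result}. The delicate part is therefore to verify that every remaining parameter value strictly violates at least one of the five conditions---in particular to handle the Hurwitz--Radon borderline, where $\rho(p,\R)$ separates existence from non-existence---and to check that no tabulated case escapes through all five necessary conditions simultaneously.
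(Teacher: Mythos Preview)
Your plan is correct and matches the paper's own proof exactly: the paper simply states that Theorem~\ref{main_theorem1} follows from Propositions~\ref{example_CM}, \ref{example_pfister}, \ref{example_maximality}, \ref{examples_non-triviality} and \ref{application_of_adams}, which is precisely the five-way dispatch you describe, with Proposition~\ref{associated duality} doing the associated-pair reduction. Your identification of which obstruction handles which family, and your remark that the excluded boundary parameters in Table~3 are exactly those covered by Facts~\ref{unpublished result} and \ref{corollary from L}, are both accurate.
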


Table~2 is a list of $(G,H)$ to which the method (i) is applicable (modulo the associated pair), 
and Table 3 is a list of $(G,H)$ whose non-existence of a compact quotient is proved by some other method. 

Theorem~\ref{main_theorem1} follows from Propositions~\ref{example_CM}, \ref{example_pfister}, \ref{example_maximality}, \ref{examples_non-triviality} and \ref{application_of_adams} which are stated in each section. 
\begin{remark}
As far as the author knows, Problem~\ref{problem_tangential} (including exceptional cases) is not solved for the following cases:
\begin{itemize}
\item ($G$, $H$) is locally isomorphic to ($Sp(2n,\R)$, $Sp(n,\C)$) for $n\geq 2$;
\item ($G$, $H$) is locally isomorphic to ($SU(2p,2q)$, $Sp(p,q)$) for $p,q\geq 2$; 
\item ($G$, $H$) is locally isomorphic to ($E_{6(-14)}, F_{4(-20)}$). 
\end{itemize}
In this paper, we do not discuss the cases where $G$ is exceptional. 
In those cases except for ($E_{6(-14)}, F_{4(-20)}$), by using the methods (i) and (iv), 
the papers \cite{tojo_excep} and \cite{tojo_dr} prove the non-existence of compact Clifford--Klein forms of tangential symmetric spaces. 
\end{remark}

\begin{remark}
$Sp(2,\R)_\theta/Sp(1,\C)_\theta$ does not admit compact Clifford--Klein forms. 
This comes from that symmetric pairs $(\frak{sp}(2,\R), \frak{sp}(1,\C))$ and $(\frak{so}(3,2), \frak{so}(3,1))$ are isomorphic to each other and that 
$SO_0(3,2)_\theta/SO_0(3,1)_\theta$ does not admit compact Clifford--Klein forms as a special case of Kobayashi--Yoshino's theorem (Fact~\ref{intro_adams}). 
\end{remark}

\section{Preliminary}\label{sec:preliminary}
In this paper, we consider tangential symmetric spaces $G_\theta/H_\theta$ associated with irreducible classical semisimple symmetric spaces $G/H$. 
We prepare the precise setting and notions in Subsection~\ref{setting}. 
In the next subsection, we review a criterion for the existence of compact Clifford--Klein forms for tangential homogeneous spaces given by Kobayashi--Yoshino \cite{kobayoshi}. 
In Subsection~\ref{subsec:associated_pair}, we see that it is enough to consider symmetric spaces up to associated pair for our problem.  
\subsection{Setting and notation}\label{setting}
Throughout this paper, we suppose that 
$G$ is a linear reductive and connected Lie group 
and $H$ is a reductive subgroup of $G$. 
If $H$ is an open subgroup of $G^\sigma:=\{g\in G : \sigma g=g\}$, 
where $\sigma$ is an involution of $G$, then such a pair $(G,H)$ is called a symmetric pair. 
Then the symmetric space $G/H$ is of reductive type ({\cite[Example~2.6.3]{kobayashi96}}). 
Moreover, if $G$ is semisimple, the pair $(G,H)$ and the space $G/H$ are said to be a semisimple symmetric pair and a semisimple symmetric space, respectively. 
We say that a semisimple symmetric space $G/H$ and a semisimple symmetric pair $(G,H)$ are irreducible if $G$ is simple or $G/H$ is locally isomorphic to a group manifold $(L\times L)/(\diag_\tau L)$, where $L$ is simple and $\diag_\tau L:=\{(\ell. \tau(\ell))\in L \times L\ |\ \ell \in L\}$ with an involution $\tau$ of $L$. 
We are mainly interested in the case of semisimple symmetric spaces in this paper. 

Now, we recall the definition of a tangential homogeneous space $G_\theta/H_\theta$ 
of a homogeneous space $G/H$ of reductive type.

\begin{definition}[Cartan motion group, Kobayashi--Yoshino {\cite[Sect.~5.1]{kobayoshi}}]\label{Cartan motion group} 
Let $\theta$ be a Cartan involution of $G$.
The Cartan motion group $G_\theta$ of $G$ is defined by
\[
G_\theta :=K\ltimes_{\Ad} \mfp.
\]
Here $K=G^\theta$ is a maximal compact Lie subgroup of $G$ 
and $\mfp=\mathfrak{g}^{-\theta}$. 
\end{definition}
Let $G/H$ be a homogeneous space of reductive type, then 
we can take a Cartan involution $\theta$ of $G$ such that 
$\theta|_H$ is also a Cartan involution of $H$. 
Then we get a closed subgroup $H_\theta:=K_H\ltimes \frak{p}_H$ of $G_\theta$ where $K_H=K\cap H$ and $\frak{p}_H=\frak{p}\cap \frak{h}$. 

\begin{definition}[Kobayashi--Yoshino {\cite[Def.~5.1.2]{kobayoshi}}]\label{def_of_tangential}
We call $(G/H)_\theta:=G_\theta/H_\theta$ the {\it tangential homogeneous space} of $G/H$. 
\end{definition}
\begin{remark}
If ($G$, $H$) is a symmetric pair, 
then so is ($G_\theta$, $H_\theta$). 
\end{remark}

\subsection{Tangential analogue of Kobayashi's criterion}\label{subsec:criterion}
By the following fact, 
the existence problem of compact Clifford--Klein forms 
for a tangential homogeneous space $G_\theta/H_\theta$ reduces to 
how large subspace of $\mfp$ satisfying condition Fact~\ref{yoshino's criterion}~(iii) we can take. 

\begin{fact}[Kobayashi--Yoshino {\cite[Thm.~5.3.2]{kobayoshi}}]\label{yoshino's criterion}
Let $G_\theta/H_\theta$ be a tangential homogeneous space of a homogeneous space $G/H$ of reductive type. 
Then, the following three conditions are equivalent:
\begin{enumerate}
\item The homogeneous space $G_\theta/H_\theta$ admits compact Clifford--Klein forms. 
\item There exists a subspace $V$ in $\frak{p}$ satisfying the following two conditions (a) and (b). 
\begin{itemize}
\item[(a)] $\frak{a}(V)\cap \frak{a}(H)=\{0\}$, 
\item[(b)] $\dim V+d(H)=d(G)$. 
\end{itemize}
\item There exists a subspace $V$ in $\frak{p}$ satisfying the following two conditions (a)' and (b)
\begin{itemize}
\item[(a)'] $V \cap \Ad(K)\frak{p}_H=\{0\}$,
\item[(b)] $\dim V+d(H)=d(G)$. 
\end{itemize}
\end{enumerate}
Here, $\frak{a}$ is a fixed maximal abelian subspace of $\frak{p}$ and 
$d(G)=\dim \frak{p}$, $d(H)=\dim \frak{p}_H$ are non-compact dimensions of $G$, $H$, respectively (\cite{koba:reductive}). 
For a subset $L$ in the Cartan motion group $G=K\ltimes \frak{p}$, 
we put $\frak{a}(L):=KLK\cap \frak{a}$. 
\end{fact}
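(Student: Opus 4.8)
The plan is to prove the equivalences by treating (ii)$\Leftrightarrow$(iii) as linear algebra and (i)$\Leftrightarrow$(iii) as geometry, the common engine being a properness criterion for the Cartan motion group. Since $K$ is compact, escape to infinity in $G_\theta=K\ltimes_{\Ad}\mfp$ is detected entirely by the $\mfp$-component, and the relevant Cartan projection is the map $\mu\colon G_\theta\to\frak{a}/W$ sending $(k,v)$ to the $W$-orbit of the element of $\frak{a}\cap\Ad(K)v$ (which is a single $W$-orbit). Unwinding the definition $\frak{a}(L)=KLK\cap\frak{a}$ in the semidirect product gives $\frak{a}(V)=\Ad(K)V\cap\frak{a}$ and $\frak{a}(H)=\Ad(K)\frak{p}_H\cap\frak{a}$, so $\mu(V)$ and $\mu(H_\theta)$ are exactly $\frak{a}(V)$ and $\frak{a}(H)$ up to $W$. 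I would first record the tangential Kobayashi criterion: a closed subset $L\subset G_\theta$ acts properly on $G_\theta/H_\theta$ if and only if $\frak{a}(L)\cap\frak{a}(H)=\{0\}$, which holds because properness is $K$-biinvariant and reduces, via $\Ad(K)\frak{a}=\mfp$, to a separation statement in $\frak{a}$.

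For (ii)$\Leftrightarrow$(iii) only the disjointness conditions need comparing, since (b) is shared. Here I would use that $\Ad(K)\frak{p}_H$ is already $\Ad(K)$-invariant: if $V\cap\Ad(K)\frak{p}_H=\{0\}$ and $x=\Ad(k_1)v=\Ad(k_2)w\in\Ad(K)V\cap\Ad(K)\frak{p}_H$ with $v\in V$, $w\in\frak{p}_H$, then $v=\Ad(k_1^{-1}k_2)w\in V\cap\Ad(K)\frak{p}_H=\{0\}$, so the $K$-saturations $\Ad(K)V$ and $\Ad(K)\frak{p}_H$ meet only at $0$; intersecting with $\frak{a}$ (and using that every $\Ad(K)$-orbit in $\mfp$ meets $\frak{a}$) yields (a). The reverse implication (a)$\Rightarrow$(a)' is trivial. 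This identifies (a) with (a)' and settles (ii)$\Leftrightarrow$(iii).

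For (iii)$\Rightarrow$(i) I would construct an explicit compact quotient. Given $V\subset\mfp$ with (a)' and (b), I take $\Gamma\cong\Z^{\dim V}$ a lattice inside the abelian translation subgroup $V\subset\mfp\subset G_\theta$. The tangential Kobayashi criterion together with $\frak{a}(V)\cap\frak{a}(H)=\{0\}$ shows $V$, hence its cocompact lattice $\Gamma$, acts properly discontinuously on $G_\theta/H_\theta$; a generic choice of lattice makes the action free. For compactness I would run the dimension count: because $K$ is compact, all non-compact directions of $G_\theta/H_\theta$ come from $\mfp$, and $H_\theta$ absorbs precisely the $\frak{p}_H$-directions, so the translation lattice $\Gamma\subset V$ yields a compact quotient exactly when $\dim V=\dim\mfp-\dim\frak{p}_H=d(G)-d(H)$, which is condition (b).

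Conversely, for (i)$\Rightarrow$(iii) I would start from an arbitrary compact Clifford--Klein form with discrete group $\Gamma$ and recover $V$ by a Bieberbach argument. Since $\Ad$ preserves an inner product on $\mfp$, the group $G_\theta$ embeds in the Euclidean isometry group $\mathrm{Isom}(\mfp)=O(\mfp)\ltimes\mfp$, so the discrete $\Gamma$ is virtually abelian and, by Bieberbach's theorem, contains a finite-index free abelian subgroup of translations spanning a subspace $V\subset\mfp$. Properness of the original action forces $\frak{a}(V)\cap\frak{a}(H)=\{0\}$, i.e.\ (a)' via the equivalence above, and compactness of $\Gamma\backslash G_\theta/H_\theta$ forces the sharp dimension equality (b). I expect the main obstacle to be exactly this last point: turning topological compactness of the quotient into the exact equality $\dim V+d(H)=d(G)$ (rather than a mere inequality), and checking that the translation subspace produced by Bieberbach's theorem is transversal to $\frak{a}(H)$ in the strong sense (a)'. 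The properness criterion and the linear-algebra step (ii)$\Leftrightarrow$(iii) are comparatively formal once the motion-group Cartan projection is set up.
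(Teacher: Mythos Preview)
The paper does not give its own proof of this statement: it is quoted as a Fact from Kobayashi--Yoshino \cite[Thm.~5.3.2]{kobayoshi}, so there is nothing to compare your argument against. (The source file does contain, inside a \texttt{comment} environment and hence not in the published text, a short proof that (a) and (a)$'$ are equivalent via the identity $\frak{a}(V)\cap\frak{a}(H)=\frak{a}(V\cap\Ad(K)\frak{p}_H)$, which is essentially your (ii)$\Leftrightarrow$(iii) step.)

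On the substance of your outline: the strategy is the standard one and matches the original Kobayashi--Yoshino argument in spirit. Two points to tighten. First, in (ii)$\Leftrightarrow$(iii) you call the implication (a)$\Rightarrow$(a)$'$ ``trivial'', but it still needs the observation that every nonzero element of $\mfp$ is $\Ad(K)$-conjugate to a nonzero element of $\frak{a}$; once you say that, both directions are equally short. Second, and more seriously, in (i)$\Rightarrow$(iii) you invoke Bieberbach by embedding $G_\theta$ into $\mathrm{Isom}(\mfp)$, but the classical first Bieberbach theorem applies to groups that are cocompact on the full Euclidean space, whereas your $\Gamma$ is only cocompact on $G_\theta/H_\theta$. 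You therefore need the more general fact that any discrete subgroup of a compact extension of an abelian group is virtually abelian (so that a finite-index subgroup lies in $\mfp$), and then a separate argument---not Bieberbach---to get the exact dimension count (b) from compactness of the quotient. You correctly flag this last step as the real work; the rest is indeed formal once the tangential Cartan projection and properness criterion are in place.
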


\begin{remark}
If $G$ is connected, 
the existence problem of compact Clifford--Klein forms 
for the tangential symmetric space of a homogeneous space of reductive type $G/H$ depends only on the local isomorphism class 
of $(G, H)$ because of Fact~\ref{yoshino's criterion}. 
\end{remark}

Remark~\ref{stand_and_tang} follows from 
the following Facts~\ref{criterion_of_properness} and 
\ref{criterion_of_cocompactness} by taking $\frak{p}_L$ as $V$ in Fact~\ref{yoshino's criterion}. 
\begin{fact}[Kobayashi {\cite[Thm.~4.1]{koba:reductive}}]\label{criterion_of_properness}
Let $H$ and $L$ be reductive subgroups of a linear reductive Lie group $G$. 
Then the following conditions on $H$ and $L$ are equivalent:
\begin{enumerate}
\item The $L$-action on $G/H$ is proper, 
\item $\Ad(K)\frak{p}_H\cap \frak{p}_L=\{0\}$. 
\end{enumerate}
\end{fact}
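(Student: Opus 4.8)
The plan is to prove both implications simultaneously by passing to the \emph{Cartan projection} and translating properness of the $L$-action into an asymptotic separation condition on cones in $\frak{a}$, which I will then identify with $\Ad(K)\frak{p}_H\cap\frak{p}_L=\{0\}$. First I would fix the Cartan involution $\theta$ compatible with both $H$ and $L$, write $\frak{g}=\frak{k}\oplus\frak{p}$ with maximal abelian $\frak{a}\subset\frak{p}$ and positive chamber $\frak{a}_+$, and use the Cartan decomposition $G=K(\exp\frak{a}_+)K$ to define $\mu\colon G\to\frak{a}_+$ by $g\in K\exp(\mu(g))K$. The three properties of $\mu$ I would record are: it is continuous and proper (a subset of $G$ is relatively compact iff its $\mu$-image is bounded); it is bi-$K$-invariant, i.e. $\mu(k_1gk_2)=\mu(g)$ for $k_i\in K$; and it is coarsely subadditive, $\|\mu(g_1g_2)-\mu(g_1)\|\leq\|\mu(g_2)\|$ for a $K$-invariant norm. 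The last property guarantees that multiplying by elements of a fixed compact set moves $\mu$ by only a bounded amount.

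Next I would reformulate properness: the $L$-action on $G/H$ is proper iff $L\cap SHS$ is relatively compact in $G$ for every compact $S\subset G$. Using properness and coarse subadditivity of $\mu$, this is equivalent to the asymptotic separation statement that for every $R>0$ the set $\{(\ell,h)\in L\times H:\|\mu(\ell)-\mu(h)\|\leq R\}$ has bounded image under $\mu$. The key computation is then that $\mu(H)=\frak{a}(H)\cap\frak{a}_+$ exactly, where $\frak{a}(H):=\frak{a}\cap\Ad(K)\frak{p}_H$. Indeed, since $\theta|_H$ is a Cartan involution of $H$, every $h\in H$ factors as $h=k_1(\exp X)k_2$ with $k_i\in K_H\subseteq K$ and $X\in\frak{p}_H$, so by bi-$K$-invariance $\mu(h)=\mu(\exp X)$ is the chamber representative of the $\Ad(K)$-orbit of $X$, lying in $\frak{a}_+\cap\Ad(K)\frak{p}_H$; conversely any $Y\in\frak{a}_+\cap\frak{a}(H)$ equals $\Ad(k)X$ for some $X\in\frak{p}_H$ with $\exp X\in H$, so $Y=\mu(\exp X)\in\mu(H)$. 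The same applies to $L$, giving $\mu(L)=\frak{a}(L)\cap\frak{a}_+$.

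I would then combine these. The cones $\frak{a}(H)$ and $\frak{a}(L)$ are closed and $W$-invariant, and for two closed cones $C_1,C_2$ through the origin the sets $\{(x,y)\in C_1\times C_2:\|x-y\|\leq R\}$ are bounded for all $R$ iff $C_1\cap C_2=\{0\}$ (otherwise, rescaling a divergent sequence produces a common nonzero unit vector). Hence properness is equivalent to $(\frak{a}(L)\cap\frak{a}_+)\cap(\frak{a}(H)\cap\frak{a}_+)=\{0\}$, and by $W$-invariance this holds iff $\frak{a}(L)\cap\frak{a}(H)=\{0\}$. Finally, since every $\Ad(K)$-orbit in $\frak{p}$ meets $\frak{a}$, one has $\frak{a}(L)\cap\frak{a}(H)=\frak{a}\cap\Ad(K)\frak{p}_H\cap\Ad(K)\frak{p}_L$, and for an $\Ad(K)$-invariant cone $X\ni 0$ one has $X=\{0\}$ iff $X\cap\frak{a}=\{0\}$; applying $\Ad(K)$ to remove it from the $\frak{p}_L$ factor yields the stated condition $\Ad(K)\frak{p}_H\cap\frak{p}_L=\{0\}$.

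The main obstacle I anticipate is the reformulation step linking genuine properness to the boundedness condition on $\mu$: one must show, in the nontrivial direction, that properness cannot fail purely ``at infinity,'' which requires the coarse subadditivity estimate together with the compactness of $K$ to absorb the perturbation coming from an arbitrary compact $S$ uniformly. The bi-$K$-invariance makes the computation of $\mu(H)$ itself clean, so the real work is the equivalence between the group-theoretic properness criterion and the metric condition on $\mu(L)$ and $\mu(H)$, which is exactly where the reductive structure (and the properness of $\mu$) is essential.
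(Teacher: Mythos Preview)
The paper does not give its own proof of this statement: it is recorded as a \emph{Fact} and simply cited from Kobayashi's original paper \cite[Thm.~4.1]{koba:reductive}. There is therefore no in-paper argument to compare against.

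Your sketch is essentially the strategy of Kobayashi's original proof: pass to the Cartan projection $\mu:G\to\frak{a}_+$, use bi-$K$-invariance and the Cartan decomposition of $H$ (resp.\ $L$) to identify $\mu(H)$ with the closed $W$-invariant cone $\frak{a}(H)\cap\frak{a}_+$, and then reduce properness of the $L$-action to the cone-separation condition $\frak{a}(H)\cap\frak{a}(L)=\{0\}$, which is equivalent to $\Ad(K)\frak{p}_H\cap\frak{p}_L=\{0\}$. The only place to be careful is the equivalence you flag yourself: showing that failure of properness produces unbounded sequences $\ell_n\in L$, $h_n\in H$ with $\|\mu(\ell_n)-\mu(h_n)\|$ bounded relies on the coarse Lipschitz estimate $\|\mu(sgt)-\mu(g)\|\le C(S)$ for $s,t$ in a fixed compact $S$, together with properness of $\mu$ itself. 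With that estimate in hand the rest of your outline goes through.
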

\begin{fact}[Kobayashi {\cite[Thm.~4.7]{koba:reductive}}]\label{criterion_of_cocompactness}
Let $H$ and $L$ be reductive subgroups of a linear reductive Lie group $G$. 
Under the conditions in Fact~\ref{criterion_of_properness}, 
the following conditions are equivalent: 
\begin{enumerate}
\item The double coset space $L\backslash G/H$ is compact, 
\item $d(G)=d(H)+d(L)$. 
\end{enumerate}
\end{fact}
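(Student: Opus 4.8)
The plan is to deduce this cocompactness criterion from the properness criterion (Fact~\ref{criterion_of_properness}) together with the Cartan decomposition of $G$ and the vector-bundle structure of reductive homogeneous spaces. After conjugating by an element of $G$ we may assume that $H$ and $L$ are $\theta$-stable, so that we have compatible Cartan decompositions $G=K\exp\frak{p}$, $H=K_H\exp\frak{p}_H$ and $L=K_L\exp\frak{p}_L$ with $d(G)=\dim\frak{p}$, $d(H)=\dim\frak{p}_H$, $d(L)=\dim\frak{p}_L$. I fix a maximal abelian $\frak{a}\subset\frak{p}$ with closed Weyl chamber $\frak{a}_+$, the Cartan (KAK) decomposition $G=K(\exp\frak{a}_+)K$, and the associated Cartan projection $\mu\colon G\to\frak{a}_+$, which is continuous and proper (the preimage of a bounded set is $K\exp(\text{bounded})K$, hence compact). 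The standing hypothesis, via Fact~\ref{criterion_of_properness}, is that $\Ad(K)\frak{p}_H\cap\frak{p}_L=\{0\}$; equivalently the cones $\frak{a}\cap\Ad(K)\frak{p}_H$ and $\frak{a}\cap\Ad(K)\frak{p}_L$ are transverse (meet only at the origin).

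First I would record the geometric meaning of the two invariants. By Mostow's decomposition, $G/H$ is $K$-equivariantly diffeomorphic to the total space of the vector bundle $K\times_{K_H}\frak{q}$ over the compact base $K/K_H$, where $\frak{q}$ is a $K_H$-invariant complement of $\frak{p}_H$ in $\frak{p}$ with $\dim\frak{q}=d(G)-d(H)$. Thus $G/H$ carries exactly $d(G)-d(H)$ non-compact directions, and the question becomes whether a proper $L$-action, whose non-compact part is $\exp\frak{p}_L$ of dimension $d(L)$, exhausts them. This makes $d(G)=d(H)+d(L)$ the exact balance condition for compactness of $L\backslash G/H$, the compact base $K/K_H$ playing no role in compactness.

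For the implication $d(G)=d(H)+d(L)\Rightarrow L\backslash G/H$ compact, I would translate compactness into the covering statement $G=L\,\Omega\,H$ for some compact $\Omega\subset G$ and verify it through $\mu$: properness forces $\overline{\mu(L)}$ and $\overline{\mu(H)}$ to be transverse cones, while the dimension balance forces them to span, so that $\frak{a}_+$ is covered up to bounded error by $\overline{\mu(L)}+\overline{\mu(H)}$; pulling this back through the proper map $\mu$ and absorbing the compact base yields a compact $\Omega$. For the converse, compactness of $L\backslash G/H$ provides such an $\Omega$, hence the cone splitting $\frak{a}=(\frak{a}\cap\Ad(K)\frak{p}_L)\oplus(\frak{a}\cap\Ad(K)\frak{p}_H)$ at the level of $\frak{a}$; the remaining task is to upgrade this splitting of $\frak{a}$ to the equality of the full non-compact dimensions in $\frak{p}$.

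The main obstacle is precisely this last upgrade. In the Cartan motion group the analogous statement (Fact~\ref{yoshino's criterion}) is linear: one literally chooses a subspace $V=\frak{p}_L$ of $\frak{p}$ and the balance $\dim V+d(H)=d(G)$ is a dimension count in a single vector space. In $G$ itself the cocompactness is not detected by the $\frak{a}$-cones alone, because passing from $\frak{a}$ back to $\frak{p}$ introduces the restricted-root multiplicities, so one must show that the fibers of $\mu$ over $\frak{a}\cap\Ad(K)\frak{p}_L$ and over $\frak{a}\cap\Ad(K)\frak{p}_H$ have dimensions that add up to $\dim\frak{p}$. Establishing that the proper, cocompact $L$-action organizes these fibers without wasting any non-compact direction, so that the multiplicities bookkeep correctly, is the genuinely nonlinear heart of the argument, and it is what forces the use of the curved Cartan decomposition rather than the purely linear reasoning available for $G_\theta/H_\theta$.
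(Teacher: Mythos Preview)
The paper does not prove this statement: it is recorded as a \emph{Fact} with a citation to \cite[Thm.~4.7]{koba:reductive} and no argument is supplied, so there is no proof in the paper to compare your proposal against.

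On its own merits your proposal is a plan rather than a proof, and you explicitly name the missing step yourself. For (ii)$\Rightarrow$(i) you assert that ``the dimension balance forces [the cones] to span'', i.e.\ that $d(H)+d(L)=d(G)$ makes $\frak a_+$ lie in $\overline{\mu(L)}+\overline{\mu(H)}$ up to a bounded set; but $d(H)$ and $d(L)$ are dimensions inside $\frak p$, whereas the cones $\mu(H),\mu(L)$ live in $\frak a$ and their sizes are controlled by the real ranks $\rank_\R H$, $\rank_\R L$, not by $d(\cdot)$. Passing from the $\frak p$-dimension count to a covering statement in $\frak a$ already requires the same root-multiplicity bookkeeping that you postpone in the converse direction. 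For (i)$\Rightarrow$(ii) you obtain an $\frak a$-level splitting from $G=L\,\Omega\,H$ and then say ``the remaining task is to upgrade this splitting of $\frak a$ to the equality of the full non-compact dimensions in $\frak p$'', calling this ``the main obstacle''. So both implications rest on the same unresolved step, and the proposal does not yet supply the idea that closes it.
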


\subsection{Associated pair}\label{subsec:associated_pair}
In this subsection,  
we review that the existence problem of compact Clifford--Klein forms 
for the tangential symmetric space of a symmetric space $G/H$ is equivalent to 
that for the tangential symmetric space of $G/H^a$, which is due to Kobayashi--Yoshino \cite{kobayoshi2},
where $(G,H^a)$ is the associated pair of $(G, H)$ defined as follows: 

Let $\sigma$ be an involution of $G$ 
which defines the symmetric pair $(G, H)$. 
We take a Cartan involution $\theta$ of $G$ satisfying $\theta \circ \sigma = \sigma \circ \theta$. 
Then $\theta|_H$ is also a Cartan involution of $H$ 
and $\theta \circ \sigma$ is also an involution of $G$. 

\begin{definition}[\cite{berger}, see also \cite{oshima_sekiguchi}]\label{def of associated pair}
We call the symmetric pair $(G,H^a)$ defined 
by $\theta \circ \sigma$ the {\it associated pair} of $(G,H)$. 
\end{definition}

\begin{remark}
For a given symmetric pair $(G,H)$, $H^a$ is not unique. 
However the space $G/H^a$ is unique as a symmetric space up to a local isomorphism. 
\end{remark}

By the definition of $\theta$ and $\sigma$, one can easily see that 
the associated pair of $(G, H^a)$ is $(G, H)$. 

\begin{proposition}[Kobayashi--Yoshino {\cite[Thm.~20]{kobayoshi2}}]\label{associated duality}
Let $(G,H)$ be a semisimple symmetric pair 
and $(G,H^a)$ the associated pair of $(G,H)$. 
Then $G_\theta/H_\theta$ admits compact Clifford--Klein forms 
if and only if $G_\theta/H^a_\theta$ admits compact Clifford--Klein forms. 
\end{proposition}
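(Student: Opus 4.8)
The plan is to deduce everything from the linear-algebra criterion of Fact~\ref{yoshino's criterion}(iii), after recording two structural identities relating $H$ and $H^a$. First I would fix the commuting involutions: by the construction in Definition~\ref{def of associated pair} we may take $\theta\circ\sigma=\sigma\circ\theta$, and $H^a$ is defined by $\sigma^a:=\theta\circ\sigma$, which again commutes with $\theta$, so $\theta$ is simultaneously a Cartan involution adapted to $(G,H)$ and to $(G,H^a)$. Decomposing $\frak{g}$ into the joint eigenspaces $\frak{g}^{\pm\pm}$ of $(\theta,\sigma)$, one reads off $\frak{p}_H=\frak{p}\cap\frak{h}=\frak{g}^{-+}$ and $\frak{p}_{H^a}=\frak{p}\cap\frak{h}^a=\frak{g}^{--}$, whence
\[ \frak{p}=\frak{p}_H\oplus\frak{p}_{H^a}, \qquad d(G)=d(H)+d(H^a). \]
Moreover, equipping $\frak{p}$ with the positive-definite $\Ad(K)$-invariant inner product $\langle X,Y\rangle=-B(X,\theta Y)=B(X,Y)$ (here $B$ is the Killing form), the two summands are orthogonal, since they are eigenspaces of the $B$-preserving involution $\sigma$ for distinct eigenvalues; thus $\frak{p}_H^\perp=\frak{p}_{H^a}$ inside $\frak{p}$.

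Next I would apply Fact~\ref{yoshino's criterion}: $G_\theta/H_\theta$ admits compact Clifford--Klein forms if and only if there is a subspace $V\subset\frak{p}$ with $V\cap\Ad(K)\frak{p}_H=\{0\}$ and $\dim V+d(H)=d(G)$; by the dimension identity above this means $\dim V=d(H^a)$. The key step is then to show that the orthogonal complement $V^\perp$ (taken in $\frak{p}$) is an admissible witness for $H^a$. For each fixed $k\in K$, the transversality $V\cap\Ad(k)\frak{p}_H=\{0\}$ together with $\dim V+\dim\Ad(k)\frak{p}_H=\dim V+d(H)=\dim\frak{p}$ forces the direct-sum decomposition $\frak{p}=V\oplus\Ad(k)\frak{p}_H$. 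Taking orthogonal complements and using that $\Ad(k)$ is an isometry of $\langle\,,\,\rangle$ with $\frak{p}_H^\perp=\frak{p}_{H^a}$, I obtain $\frak{p}=V^\perp\oplus\Ad(k)\frak{p}_{H^a}$, and in particular $V^\perp\cap\Ad(k)\frak{p}_{H^a}=\{0\}$. As this holds for every $k$, we get $V^\perp\cap\Ad(K)\frak{p}_{H^a}=\{0\}$, while $\dim V^\perp=d(G)-d(H^a)$ gives $\dim V^\perp+d(H^a)=d(G)$. Hence $V^\perp$ satisfies conditions (a)$'$ and (b) of Fact~\ref{yoshino's criterion} for $H^a$, so $G_\theta/H^a_\theta$ admits compact Clifford--Klein forms.

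Finally, since the associated pair of $(G,H^a)$ is again $(G,H)$, as observed above, the same argument with the roles of $H$ and $H^a$ interchanged yields the reverse implication, proving the equivalence. The construction is essentially self-dual, so I do not expect a serious obstacle; the only point requiring care is the verification that the orthocomplement behaves well, namely $(\Ad(k)\frak{p}_H)^\perp=\Ad(k)\frak{p}_{H^a}$, which rests precisely on the two structural facts $\frak{p}_H^\perp=\frak{p}_{H^a}$ and the $\Ad(K)$-invariance of the inner product, together with the per-$k$ dimension count that upgrades transversality to a genuine direct-sum decomposition.
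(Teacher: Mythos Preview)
Your proof is correct and follows essentially the same approach as the paper: take a witness $V$ for $H$ from Fact~\ref{yoshino's criterion}(iii), pass to $V^\perp$ using the $\Ad(K)$-invariant Killing form on $\frak{p}$ together with $\frak{p}_H^\perp=\frak{p}_{H^a}$, and invoke the symmetry $(H^a)^a=H$ for the converse. The paper's version is terser---it just asserts that $V^\perp\cap\Ad(K)\frak{p}_{H^a}=\{0\}$ follows from $\Ad$ being unitary and $\frak{p}_H^\perp=\frak{p}_{H^a}$---whereas you make the mechanism explicit via the per-$k$ direct-sum and orthocomplement argument; but the underlying idea is identical.
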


\begin{proof}
It is enough to show ``only if'' part. 
Suppose $s(G,H)=d(G)-d(H)(=d(H^a))$. 
Let $B$ be the restriction on $\mfp$ of the Killing form on $\frak{g}$. 
Take a subspace $V$ in $\frak{p}$ such that 
$\dim V=d(G)-d(H)=d(H^a)$ and $\Ad(K)\frak{p}_H\cap V=\{0\}$. 
By taking the orthogonal complement of $V$, 
we obtain the subspace $V^\perp$ in $\mfp$ satisfying the two conditions, $\dim V^\perp=d(G)-d(H^a)$ and $\Ad(K)\mfp_{H^a}\cap V^\perp=\{0\}$, 
which follow from the facts that the representation $\Ad$ is unitary and that the orthogonal complement of $\mfp_H$ is $\mfp_{H^a}$ with regard to $B$. 
\end{proof}

\section{Calabi--Markus phenomenon}\label{section:CM}
We recall from Kobayashi \cite{koba:reductive} and Kobayashi--Yoshino \cite{kobayoshi} that Calabi--Markus phenomenon is one of the necessary conditions for the existence of compact quotients in the reductive setting, established by Kobayashi \cite{koba:reductive}, and 
that there is a tangential analogue of the Calabi--Markus phenomenon given by Kobayashi--Yoshino \cite{kobayoshi}. 
\begin{fact}[Kobayashi--Yoshino {\cite[Sect. 5.2 (8)]{kobayoshi}}]\label{Calabi-Markus}
If a homogeneous space $G/H$ of reductive type satisfies that 
$\rank_\R G=\rank_\R H$ and $G/H$ is non-compact, 
then the tangential homogeneous space $G_\theta/H_\theta$ 
does not admit compact Clifford--Klein forms. 
\end{fact}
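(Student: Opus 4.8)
The plan is to establish the tangential Calabi--Markus phenomenon (Fact~\ref{Calabi-Markus}) by verifying that the rank equality $\rank_\R G=\rank_\R H$ forces the orbit set $\Ad(K)\frak{p}_H$ to fill up all of $\frak{p}$ in the sense relevant to Fact~\ref{yoshino's criterion}~(iii). More precisely, I would exploit the following standard fact about symmetric/reductive pairs: since $\theta|_H$ is a Cartan involution of $H$, the space $\frak{p}_H=\frak{p}\cap\frak{h}$ is the ``non-compact part'' of $\frak{h}$, and a maximal abelian subspace $\frak{a}_H$ of $\frak{p}_H$ has dimension $\rank_\R H$, while a maximal abelian subspace $\frak{a}$ of $\frak{p}$ has dimension $\rank_\R G$. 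The hypothesis $\rank_\R G=\rank_\R H$ therefore says $\dim\frak{a}=\dim\frak{a}_H$.

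The key geometric input I would use is that $\frak{a}_H$ can be enlarged to a maximal abelian subspace of $\frak{p}$ (because $\frak{a}_H$ is abelian in $\frak{p}$), and under the rank equality this enlargement is already maximal, so we may take $\frak{a}=\frak{a}_H$ after conjugating by an element of $K$. Consequently $\frak{a}\subset\frak{p}_H$, and by the Cartan decomposition $\frak{p}=\Ad(K)\frak{a}$ we obtain
\begin{equation*}
\frak{p}=\Ad(K)\frak{a}\subset\Ad(K)\frak{p}_H.
\end{equation*}
Thus $\Ad(K)\frak{p}_H=\frak{p}$. Now condition (iii)(a)$'$ of Fact~\ref{yoshino's criterion} requires a subspace $V$ with $V\cap\Ad(K)\frak{p}_H=\{0\}$, which with $\Ad(K)\frak{p}_H=\frak{p}$ forces $V=\{0\}$. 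But condition (iii)(b) demands $\dim V=d(G)-d(H)$, and the non-compactness of $G/H$ guarantees $d(G)>d(H)$, i.e.\ $\dim V>0$, a contradiction. Hence no admissible $V$ exists, and by Fact~\ref{yoshino's criterion} the tangential space $G_\theta/H_\theta$ admits no compact Clifford--Klein form.

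The step I expect to be the main obstacle is the reduction $\Ad(K)\frak{p}_H=\frak{p}$, specifically the claim that under $\rank_\R G=\rank_\R H$ one may conjugate $\frak{a}_H$ to coincide with a maximal abelian subspace $\frak{a}$ of $\frak{p}$. The subtlety is that maximality of $\frak{a}_H$ as an abelian subspace of $\frak{p}_H$ does not immediately give maximality in $\frak{p}$; one must argue that any abelian subspace of $\frak{p}$ containing $\frak{a}_H$ has the same dimension $\rank_\R G=\rank_\R H=\dim\frak{a}_H$, hence equals $\frak{a}_H$. This uses the compatibility of the two Cartan decompositions (that $\theta$ restricts to a Cartan involution of $H$, so $\frak{a}_H$ is a genuine maximal split torus of $H$ sitting inside $\frak{p}$) together with the elementary fact that all maximal abelian subspaces of $\frak{p}$ are $\Ad(K)$-conjugate and of common dimension $\rank_\R G$. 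I would also note that the final non-emptiness obstruction $d(G)>d(H)$ follows from non-compactness of $G/H$, since $G/H$ being compact is equivalent to $d(G)=d(H)$ in the reductive setting; this is where the hypothesis that $G/H$ is non-compact is used.
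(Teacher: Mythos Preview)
Your argument is correct and is exactly the standard one: the paper does not spell out a proof of this fact but cites Kobayashi--Yoshino, and the intended reasoning is precisely what you wrote---under $\rank_\R G=\rank_\R H$ a maximal abelian subspace $\frak{a}_H\subset\frak{p}_H$ is already maximal abelian in $\frak{p}$, so $\frak{p}=\Ad(K)\frak{a}\subset\Ad(K)\frak{p}_H$, whence condition~(iii)(a)$'$ of Fact~\ref{yoshino's criterion} forces $V=\{0\}$ while non-compactness of $G/H$ gives $d(G)-d(H)>0$. The ``obstacle'' you flag is not a real difficulty: extending $\frak{a}_H$ to a maximal abelian subspace of $\frak{p}$ and comparing dimensions is all that is needed, and you handle it correctly.
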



\begin{remark}
For a homogeneous space $G/H$ of reductive type, Kobayashi \cite[Cor. 4.4]{koba:reductive} established a necessary and sufficient condition for the Calabi--Markus phenomenon by the real rank condition $\rank_\R G=\rank_\R H$. 
Later, Kobayashi--Yoshino \cite{kobayoshi} showed that an analogous result holds for the tangential homogeneous space $G_\theta/H_\theta$, namely, 
only a finite subgroup of $G_\theta$ can act properly discontinuously on $G_\theta/H_\theta$, if and only if $\rank_\R G=\rank_\R H$ by an easier and analogous proof, see \cite[Sect. 5.2]{kobayoshi}. 
\end{remark}

For a irreducible classical semisimple symmetric pair $(G,H)$, 
we consider the following two conditions A and B. 
\begin{align*}
\text{A}&:\rank_\R G=\rank_\R H,\\
\text{B}&:\text{the associated pair satisfies the condition A.}
\end{align*}
\begin{proposition}\label{example_CM}
Let $(G,H)$ be a symmetric pair which is locally isomorphic to one in Table~4 and 
suppose that $G$ is connected. 
Then $G_\theta/H_\theta$ does not admit compact Clifford--Klein forms. 
\end{proposition}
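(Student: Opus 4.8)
The plan is to prove Proposition~\ref{example_CM} by reducing it to Fact~\ref{Calabi-Markus} (the tangential Calabi--Markus phenomenon) combined with Proposition~\ref{associated duality} (invariance under passage to the associated pair). The strategy rests on the two conditions A and B introduced just before the statement: condition A asserts $\rank_\R G=\rank_\R H$, and condition B asserts that the associated pair $(G,H^a)$ satisfies A. I would first argue that it suffices to show that every pair listed in Table~4 satisfies A \emph{or} B. Indeed, if $(G,H)$ satisfies A, then since $G/H$ is a noncompact irreducible semisimple symmetric space, Fact~\ref{Calabi-Markus} applies directly and $G_\theta/H_\theta$ admits no compact Clifford--Klein form. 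If instead $(G,H)$ satisfies B, meaning the associated pair $(G,H^a)$ satisfies $\rank_\R G=\rank_\R H^a$, then Fact~\ref{Calabi-Markus} shows $G_\theta/H^a_\theta$ admits no compact quotient, and Proposition~\ref{associated duality} transports this conclusion back to $G_\theta/H_\theta$. Thus the entire proposition follows once the rank bookkeeping is checked.

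The core of the argument is therefore a finite, case-by-case verification that each of the symmetric pairs in Table~4 falls under A or under B. First I would record the real ranks of the relevant classical groups in the standard normalizations: for example $\rank_\R SO_0(p,q)=\min(p,q)$, $\rank_\R SU(p,q)=\min(p,q)$, $\rank_\R Sp(p,q)=\min(p,q)$, $\rank_\R SO^*(2n)=\gauss{n/2}$, $\rank_\R SO(n,\C)=\gauss{n/2}$, $\rank_\R SL(n,\R)=n-1$, $\rank_\R SU^*(2n)=n-1$, and the complex cases read off from the corresponding split real forms. For each entry I would compute $\rank_\R G-\rank_\R H$ and, when that difference is nonzero, pass to the associated pair $(G,H^a)$ and compute $\rank_\R G-\rank_\R H^a$ instead. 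The associated pairs are the standard ones recorded in the literature (e.g.\ $SO^*(2(p+q))\supset SO^*(2p)\times SO^*(2q)$ is associated to $SO^*(2(p+q))\supset U(p,q)$, and $SO(p+q,\C)\supset SO(p,\C)\times SO(q,\C)$ is associated to $SO(p+q,\C)\supset SO_0(p,q)$), so each row of Table~4 really packages a symmetric space together with its associate.

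The key technical point, and the main place where care is needed, is the floor-function arithmetic of the real ranks. For a pair such as $(SO^*(2(p+q)),\,SO^*(2p)\times SO^*(2q))$ one has the rank defect $\gauss{(p+q)/2}-\gauss{p/2}-\gauss{q/2}$, which equals $0$ precisely when $p$ or $q$ is even and equals $1$ when both are odd; when the defect is $1$ for $(G,H)$, one must verify that the associated pair closes the gap, i.e.\ that $(G,H^a)$ then satisfies $\rank_\R G=\rank_\R H^a$. The analogous computation governs the $SO(p+q,\C)$ family. So the substantive content is to show that, for the families listed, at least one of the two pairs $(G,H)$, $(G,H^a)$ has matching real rank; equivalently, the rank defect of $(G,H)$ and the rank defect of its associate are never both nonzero for the table's entries. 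I expect this parity-of-floors check to be the main obstacle, purely as bookkeeping, since the noncompactness of $G/H$ is automatic for all the noncompact irreducible symmetric pairs under consideration and needs only a one-line remark. Once the rank equalities are tabulated, the proof is immediate from Facts~\ref{Calabi-Markus} and Proposition~\ref{associated duality} as above.
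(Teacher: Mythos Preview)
Your proposal is correct and follows exactly the paper's approach: the paper offers no separate proof of Proposition~\ref{example_CM} beyond Table~4 itself, whose columns record $\rank_\R G$, $\rank_\R H$, and the designation ``A'' or ``B'', so that the argument is precisely the combination of Fact~\ref{Calabi-Markus} (for rows marked A) and Proposition~\ref{associated duality} plus Fact~\ref{Calabi-Markus} (for rows marked B), together with the rank bookkeeping you describe. The only caveat is that your final paragraph slightly over-anticipates difficulty: in Table~4 the $SO^*(2(p+q))$ and $SO(p+q,\C)$ families already carry the hypothesis ``$p$ or $q$ is even'', so the floor-defect is zero there and the case of both $p,q$ odd is deferred to Table~5 and other methods.
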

\begin{center}
\stepcounter{table}
Table \thetable : Symmetric pairs $(G,H)$ satisfying A or B. 
\begin{tabular}{c|c|c|c|c}
$G$&$H$&$\rank_\R G$&$\rank_\R H$ &A or B\\
\hline
$SL(p+q,\C)$ 
&$S(GL(p,\C)\times GL(q,\C))$&$p+q-1$&$p+q-1$&A\\
$p,q\geq 1$&$SU(p,q)$&&$\min(p,q)$&B\\
\hline
$SL(n,\C)$
&$SL(n,\R)$&$n-1$&$n-1$&A\\
$n\geq 2$&$SO(n,\C)$&&$\lfloor\frac{n}{2}\rfloor$&B\\
\hline
$SL(p+q,\R)$ 
&$S(GL(p,\R)\times GL(q,\R))$&$p+q-1$&$p+q-1$&A\\
$p,q\geq 1$&$SO_0(p,q)$&&$\min(p,q)$&B\\
\hline
$SO(2n,\C)$
&$GL(n,\C)$&$n$&$n$&A\\
$n\geq 2$&$SO^*(2n)$&&$\lfloor\frac{n}{2}\rfloor$&B\\
\hline
$SO_0(n,n)$
&$GL(n,\R)$&$n$&$n$&A\\
$n\geq 1$&$SO(n,\C)$&&$\lfloor\frac{n}{2}\rfloor$&B\\
\hline
$SU(n,n)$ $n\geq 1$
&$SL(n,\C)\times \R^\times $&$n$&$n$&A\\
\hline
$SU(n,n)$
&$Sp(n,\R)$&$n$&$n$&A\\
$n\geq 1$&$SO^*(2n)$&&$\lfloor\frac{n}{2}\rfloor$&B\\
\hline
$SU(p,q)$ $p,q\geq 1$
&$SO_0(p,q)$&$\min(p,q)$&$\min(p,q)$&A\\
\hline
$Sp(n,n)$
&$U^*(2n)$&$n$&$n$&A\\
$n\geq 1$&$Sp(n,\C)$&&$n$&A\\
\hline
$Sp(p,q)$
 $p,q\geq 1$&$U(p,q)$&$\min(p,q)$&$\min(p,q)$&A\\
\hline
$Sp(p+q,\C)$ 
&$Sp(p,\C)\times Sp(q,\C)$&$n$&$n$&A\\
$p,q\geq 1$&$Sp(p,q)$&&$\min(p,q)$&B\\
\hline
$Sp(n,\C)$
&$Sp(n,\R)$&$n$&$n$&A\\
$n\geq 1$&$GL(n,\C)$&&$n$&A\\
\hline
$Sp(n,\R)$ $n\geq 1$
&$GL(n,\R)$&$n$&$n$&A\\
\hline
$Sp(p+q,\R)$ $p,q\geq 1$
&$Sp(p,\R)\times Sp(q,\R)$&$n$&$n$&A\\
&$U(p,q)$&&$\min(p,q)$&B\\
\hline
$SU^*(2n)$ 
&$SU^*(2n)\cap GL(2n,\R)$&$n-1$&$n-1$&A\\
$n\geq 2$&$SO^*(2n)$&&$\lfloor\frac{n}{2}\rfloor$&B\\
\hline
$SU^*(2(p+q))$ 
&$S(U^*(2p)\times U^*(2q))$&$n-1$&$n-1$&A\\
$p,q\geq 1$&$Sp(p,q)$&&$\min(p,q)$&B\\
\hline
$SO^*(2n)$
&$SO(n,\C)$&$\lfloor\frac{n}{2}\rfloor$&$\lfloor\frac{n}{2}\rfloor$&A\\
\hline
$SO^*(4n)$ $n\geq 1$
&$U^*(2n)$&$n$&$n$&A\\
\hline 
$SO^*(2(p+q))$ $p,q\geq 1$& $SO^*(2p)\times SO^*(2q)$ & $\lfloor \frac{p+q}{2}\rfloor$ & $\lfloor\frac{p}{2}\rfloor+\lfloor\frac{q}{2}\rfloor$ & A \\
$p$ or $q$ is even & $U(p,q)$ & &$\min(p,q)$ & B \\
\hline
$SO(p+q, \C)$ $p,q\geq 1$ & $SO(p,\C)\times SO(q,\C)$ & $\lfloor \frac{p+q}{2}\rfloor$ & $\lfloor\frac{p}{2}\rfloor+\lfloor\frac{q}{2}\rfloor$ & A \\
$p$ or $q$ is even & $SO_0(p.q)$ & & $\min(p,q)$ & B
\end{tabular}
\label{tab:excluded}
\end{center}


The remaining cases are listed in Table~5. 
We also enrich the table with methods for the proof of Theorem~\ref{main_theorem1}. 
\begin{center}
\stepcounter{table}
Table \thetable: irreducible semisimple symmetric pairs we consider in the following sections. 
\begin{tabular}{c|c|c}
$G$&$H$&methods\\
\hline
$SO^*(2(2p+2q+2))$ 
&$SO^*(2(2p+1))\times SO^*(2(2q+1))$&(iii), (v)\\
$p,q\geq 0$, $(p,q)\neq (0,0)$&$U(2p+1,2q+1)$& \\
\hline
$SO(2p+2q+2,\C)$ 
&$SO(2p+1,\C)\times SO(2q+1,\C))$&(v)  \\
$p,q\geq 0$, $(p,q)\neq (0,0)$&$SO_0(2p+1,2q+1)$&\\
\hline
$SO_0(p_1+p_2,q_1+q_2)$ 
&$SO_0(p_1,q_1)\times SO_0(p_2,q_2)$& (iii),  (iv), (v) \\
$0\leq p_1\leq p_2, q_1, q_2\geq 1$ && \\
\cline{1-3}
$SU(p_1+p_2,q_1+q_2)$ 
&$S(U(p_1,q_1)\times U(p_2,q_2))$&(iii), (iv), (v)\\
$0\leq p_1\leq p_2, q_1, q_2\geq 1$&& \\
\cline{1-3}
$Sp(p_1+p_2,q_1+q_2)$ 
&$Sp(p_1,q_1)\times Sp(p_2,q_2)$&(iii), (iv) \\
$0\leq p_1\leq p_2, q_1, q_2\geq 1$&&\\
\hline
$SL(2n,\C)$
&$Sp(n,\C)$&(ii) \\
$n\geq 2$&$SU^*(2n)$&\\
\cline{1-3}
$SL(2n,\R)$
&$Sp(n,\R)$&(ii) \\
$n\geq 2$&$SU^*(2n)\cap GL(2n,\R)$&\\
\hline
$SO_0(2p,2q)$ $1\leq p\leq q$
&$U(p,q)$&(iv) \\
\end{tabular}
\end{center}

\section{Applications of Pfister's theorem}\label{section:Pfister}
In this section, 
we give a necessary condition for the existence of compact Clifford--Klein forms for tangential symmetric spaces (Theorem~\ref{application_of_pfister}) 
and apply it to two types of symmetric pairs (Proposition~\ref{example_pfister}). 
We use Pfister's theorem (see Fact~\ref{Fulton's Lemma}) to prove Theorem~\ref{application_of_pfister}. 

\begin{theorem}\label{application_of_pfister}
Let $G/H$ be a semisimple symmetric space and fix an embedding of Lie algebras $\frak{g}\subset \frak{sl}(2n,\K)$, where $\K=\R$ or $\C$. 
If the following two conditions are satisfied, 
then $G_\theta/H_\theta$ does not admit compact Clifford--Klein forms. 
\begin{enumerate}
\item $d(G)-d(H)\geq n$, 
\item For $X\in \frak{a}\subset M(2n,\K)$, if the characteristic polynomial of $X$ over $\K$ is even, then $X$ is contained in $W_G\cdot \frak{a}_H$. 
\end{enumerate}
Here, $W_G$ is the Weyl group of $G$ and $\frak{a}_H:=\frak{a}\cap \frak{h}$. 
\end{theorem}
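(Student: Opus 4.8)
The plan is to argue by contradiction, combining the tangential criterion (Fact~\ref{yoshino's criterion}) with the Pfister/Borsuk--Ulam statement on common zeros of odd-degree forms (Fact~\ref{Fulton's Lemma}). First I would suppose that $G_\theta/H_\theta$ admits a compact Clifford--Klein form and apply Fact~\ref{yoshino's criterion}(ii): there is a subspace $V\subset\frak{p}$ with $\dim V=d(G)-d(H)$ and $\frak{a}(V)\cap\frak{a}(H)=\{0\}$. Choosing $\frak{a}$ to contain a maximal abelian subspace $\frak{a}_H=\frak{a}\cap\frak{h}$ of $\frak{p}_H$, Kostant's theorem gives the identification $\frak{a}(H)=\Ad(K)\frak{p}_H\cap\frak{a}=W_G\cdot\frak{a}_H$, which matches the object appearing in condition (ii) of the theorem. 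By hypothesis (i) we have $\dim V\geq n$.

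Next I would set up the odd coefficients of the characteristic polynomial. Writing $\det(tI-X)=\sum_{i=0}^{2n}(-1)^i c_i(X)\,t^{2n-i}$ for $X\in M(2n,\K)$, each $c_i$ is homogeneous of degree $i$. Since $\frak{g}\subset\frak{sl}(2n,\K)$ we have $c_1\equiv 0$ on $\frak{g}$, so the characteristic polynomial of $X$ is even exactly when the $n-1$ coefficients $c_3,c_5,\dots,c_{2n-1}$ vanish. Because $\frak{p}$ consists of self-adjoint elements for a Cartan involution of the form $\theta(X)=-X^{*}$, every $X\in\frak{p}$ has real eigenvalues, so these $c_{2j+1}$ restrict to genuinely real-valued forms on $\frak{p}$ (this is what keeps the count correct even when $\K=\C$). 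Restricting to $V$ therefore yields $n-1$ homogeneous real forms of odd degree on a space of dimension $\geq n$.

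The main step is then to apply Pfister's theorem (Fact~\ref{Fulton's Lemma}), equivalently the Borsuk--Ulam theorem to the odd map $(c_3,\dots,c_{2n-1})$ restricted to the unit sphere of $V$, whose dimension $\dim V-1\geq n-1$ is at least the target dimension $n-1$: the $n-1$ odd forms admit a common nonzero zero $X_0\in V$, so the characteristic polynomial of $X_0$ is even. I would then pick $k\in K$ with $X_0':=\Ad(k)X_0\in\frak{a}$; since the characteristic polynomial is $\Ad(K)$-invariant, that of $X_0'$ is again even, whence condition (ii) gives $X_0'\in W_G\cdot\frak{a}_H=\frak{a}(H)$. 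On the other hand $X_0'\in\Ad(K)V\cap\frak{a}=\frak{a}(V)$ and $X_0'\neq 0$, contradicting $\frak{a}(V)\cap\frak{a}(H)=\{0\}$. Hence no such $V$ exists and $G_\theta/H_\theta$ admits no compact Clifford--Klein form.

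I expect the principal obstacle to be making the dimension count tight: one must use the tracelessness $c_1\equiv 0$ coming from $\frak{sl}(2n,\K)$ (rather than $\frak{gl}$) to reduce to exactly $n-1$ odd forms, so that the bound $\dim V\geq n$ from hypothesis (i) is precisely what Borsuk--Ulam requires. The secondary delicate point is justifying that the odd coefficients are real-valued on $\frak{p}$, so that the \emph{real} Pfister/Borsuk--Ulam statement applies uniformly for $\K=\R$ and $\K=\C$. The remaining ingredients --- the identity $\frak{a}(H)=W_G\cdot\frak{a}_H$ and the $\Ad(K)$-invariance of the even-characteristic-polynomial condition --- are routine.
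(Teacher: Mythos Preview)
Your proposal is correct and follows essentially the same approach as the paper: reduce via Fact~\ref{yoshino's criterion} to showing that any $V\subset\frak{p}$ of dimension $\geq n$ contains a nonzero element with even characteristic polynomial, then apply Pfister's theorem (Fact~\ref{Fulton's Lemma}) to the $n-1$ odd-degree coefficients $c_3,\dots,c_{2n-1}$ (using $c_1\equiv 0$ from $\frak{g}\subset\frak{sl}$), and conclude via $\Ad(K)$-invariance of the characteristic polynomial and condition~(ii). The only cosmetic differences are that you work with formulation~(ii) of Fact~\ref{yoshino's criterion} and the identification $\frak{a}(H)=W_G\cdot\frak{a}_H$, whereas the paper uses formulation~(iii) together with the inclusion $W_G\cdot\frak{a}_H\subset\Ad(K)\frak{p}_H$, and you are more explicit than the paper about why the odd coefficients are real-valued on $\frak{p}$ when $\K=\C$ (a point the paper leaves tacit).
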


\begin{fact}[{\cite{pfister}}. See also {\cite[Example~13.1(c)]{fulton}}]\label{Fulton's Lemma}
Let $V$ be a real vector space with dimension $n+1$. 
Suppose $f_i : V \rightarrow \R$ ($i=1,\cdots, n$) are homogeneous polynomial functions on $V$ of odd degree. 
Then $\{f_i\}^n_{i=1}$ has common zero points in $V\setminus\{0\}$. 
\end{fact}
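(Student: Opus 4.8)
The plan is to reduce the statement to a single application of the Borsuk--Ulam theorem. Fix a linear identification $V \cong \R^{n+1}$ and let $S^n \subset V$ denote the unit sphere for some inner product. The crucial point is that each $f_i$ is homogeneous of \emph{odd} degree, so $f_i(-x) = -f_i(x)$ for every $x \in V$; in particular the restriction $f_i|_{S^n}$ is a continuous, antipodal-reversing (odd) function on $S^n$.

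First I would assemble these into a single continuous map $F = (f_1, \dots, f_n) : S^n \to \R^n$, which is odd because each of its components is. The Borsuk--Ulam theorem guarantees that any continuous map $S^n \to \R^n$ takes equal values on some antipodal pair, i.e. there is $x \in S^n$ with $F(x) = F(-x)$. Combining this with oddness gives $F(x) = F(-x) = -F(x)$, which forces $F(x) = 0$ and hence $f_1(x) = \cdots = f_n(x) = 0$. Since $x \in S^n \subset V \setminus \{0\}$, this $x$ is the desired common zero away from the origin. (Equivalently, one may invoke the standard corollary that a continuous odd map $S^n \to \R^n$ must vanish somewhere.)

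I expect no serious obstacle once Borsuk--Ulam is brought in; the only point to check carefully is that oddness of the degree is exactly what makes $F$ antipodal-reversing, and this hypothesis cannot be dropped, since a single even-degree form such as $x_0^2 + \cdots + x_n^2$ has no nonzero real zero. I would also note that the conclusion is purely topological: it holds verbatim for \emph{any} continuous odd functions $f_i$, polynomiality playing no role beyond supplying the homogeneity that yields oddness. If instead one wished to follow Pfister's original algebraic route, one would reason through the theory of real forms and the behaviour of odd-degree maps on real projective space, but the Borsuk--Ulam argument is both shorter and self-contained.
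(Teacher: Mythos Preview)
Your argument is correct: homogeneity of odd degree makes each $f_i$ an odd function, so $F=(f_1,\dots,f_n)\colon S^n\to\R^n$ is continuous and odd, and the Borsuk--Ulam theorem forces $F$ to vanish at some point of $S^n$. Nothing is missing.

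As for comparison with the paper: the paper does \emph{not} give a proof of this statement. It is recorded as a Fact with references to Pfister and to Fulton's \emph{Intersection Theory}, and is then used as a black box in the proof of Theorem~\ref{application_of_pfister}. So there is no competing argument to compare against. Your Borsuk--Ulam proof is one of the standard routes; Pfister's original treatment is more algebraic (via the theory of systems of quadratic forms over real fields), while Fulton places the statement in the setting of intersection theory on real projective space. Your topological approach is the shortest self-contained one and is entirely adequate here; the only caveat worth recording is that it imports a nontrivial theorem (Borsuk--Ulam) from outside, whereas the paper simply cites the result and moves on.
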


\begin{proof}[Proof of Theorem~\ref{application_of_pfister}]
From Fact~\ref{yoshino's criterion} and $W_G\cdot \frak{a}_H\subset \Ad(K)\frak{p}_H$, it is enough to prove that for any $\R$-subspace $V$ with dimension $n$ of $\frak{p}$, 
$W_G\cdot \frak{a}_H\cap V\neq \{0\}$ holds. 
By the assumption, it is enough to show that there exists a non-zero element $X\in V$ such that $f_X(x)$ is even, where $f_X$ denotes the characteristic polynomial of $X\in V\subset M(2n,\K)$. 
Let $V$ be a subspace of $\frak{p}$ such that $\dim_\R V=n$. 
We define maps $\tau_i:V\to \R$ ($i=0,1,\cdots,2n$) by 
\[ f_X(x)=\det(xI-X)=\sum_{i=0}^{2n}\tau_i(X)x^i \text{ for }X\in V. \]
Then $\tau_{2n}=1$, $\tau_{2n-1}(X)=\trace (X)=0$ for all $X\in V$ by definition. 
Since $\tau_{2i-1}$ ($i=1,2,\cdots, n-1$) are homogeneous polynomials on $V$ of odd degree, by using the Fact~\ref{Fulton's Lemma}, 
we can take a non-zero element $X\in V$ such that $f_X(x)$ is even. 
\end{proof}

\begin{proposition}\label{example_pfister}
Let $(G, H)$ and $(G, H^a)$ be symmetric pairs which are locally isomorphic to one of the following list and suppose that $G$ is connected. 
Then neither $G_\theta/H_\theta$ nor $G_\theta/H^a_\theta$ 
admit compact Clifford--Klein forms. 
\begin{itemize}
\item $(G,H,H^a)=(SL(2n,\R), Sp(n,\R), SU^*(2n)\cap GL(2n,\R))$ $(n\geq 2)$, 
\item $(G,H,H^a)=(SL(2n,\C), Sp(n,\C), SU^*(2n))$ $(n\geq 2)$.
\end{itemize}
\end{proposition}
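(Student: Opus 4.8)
The plan is to apply Theorem~\ref{application_of_pfister} to the symplectic member of each associated pair and to let Proposition~\ref{associated duality} handle the other member. Since $(G,H)$ and $(G,H^a)$ form an associated pair, Proposition~\ref{associated duality} shows that $G_\theta/H_\theta$ admits a compact Clifford--Klein form if and only if $G_\theta/H^a_\theta$ does; hence it suffices to rule out compact Clifford--Klein forms for a single one of them. I would take $H=Sp(n,\R)$ in the real case and $H=Sp(n,\C)$ in the complex case, because for the symplectic subgroup hypothesis (ii) of Theorem~\ref{application_of_pfister} becomes transparent.

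First I would fix the standard embedding $\frak{g}\subset \frak{sl}(2n,\K)$ with $\K=\R$ (resp. $\K=\C$) and the compatible Cartan involution $\theta(X)=-\trans X$ (resp. $\theta(X)=-X^\ast$), so that $\frak{a}$ is the space of real traceless diagonal matrices, on which the restricted Weyl group $W_G\cong S_{2n}$ acts by permuting the $2n$ diagonal entries. Realizing $Sp(n,\K)$ through the form $J=\left(\begin{smallmatrix}0&I_n\\-I_n&0\end{smallmatrix}\right)$, a direct computation identifies $\frak{a}_H=\frak{a}\cap\frak{h}$ with $\{\diag(a_1,\dots,a_n,-a_1,\dots,-a_n)\}$, so that $\rank_\R H=n$ and $\frak{p}_H$ consists of the blockwise symmetric matrices $\left(\begin{smallmatrix}A&B\\B&-A\end{smallmatrix}\right)$.

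Next I would verify condition (i) by computing the non-compact dimensions: $d(G)-d(H)=n^2-1$ in the real case and $d(G)-d(H)=2n^2-n-1$ in the complex case, both of which exceed $n$ for $n\geq 2$. The heart of the argument is condition (ii). For $X=\diag(\lambda_1,\dots,\lambda_{2n})\in\frak{a}$, the characteristic polynomial $\prod_i (x-\lambda_i)$ is even precisely when it is invariant under $x\mapsto -x$, i.e. when the multiset $\{\lambda_i\}$ is stable under $\lambda\mapsto-\lambda$. Since the degree $2n$ is even and the nonzero eigenvalues then occur in $\pm$ pairs, the eigenvalue $0$ is forced to have even multiplicity, so the eigenvalues group as $\{a_1,-a_1,\dots,a_n,-a_n\}$. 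Applying a suitable permutation from $W_G=S_{2n}$ brings $X$ into the shape $\diag(a_1,\dots,a_n,-a_1,\dots,-a_n)\in\frak{a}_H$, whence $X\in W_G\cdot\frak{a}_H$; this is exactly condition (ii).

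With (i) and (ii) in hand, Theorem~\ref{application_of_pfister} yields the non-existence of compact Clifford--Klein forms for $G_\theta/Sp(n,\K)_\theta$, and Proposition~\ref{associated duality} transfers the conclusion to $G_\theta/H^a_\theta$. I do not anticipate a serious difficulty: the content of (ii) is the elementary fact that an even degree-$2n$ polynomial has its roots in $\pm$ pairs with $0$ of even order, and the only point needing care is choosing the symplectic form so that the diagonal Cartan part $\frak{a}_H$ of $\frak{sp}(n,\K)$ is literally the negation-symmetric diagonal sitting inside $\frak{a}$. It is precisely this coincidence between the ``even characteristic polynomial'' condition and the symplectic splitting $\diag(a,-a)$ that singles out the symplectic member of each associated pair as the one to which Theorem~\ref{application_of_pfister} applies directly.
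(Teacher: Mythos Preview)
Your proposal is correct and follows essentially the same route as the paper: apply Theorem~\ref{application_of_pfister} to the symplectic member $(SL(2n,\K),Sp(n,\K))$, verify condition~(i) via the non-compact dimension count $d(G)-d(H)=n^2-1$ (resp.\ $2n^2-n-1$) $\geq n$ for $n\geq 2$, and verify condition~(ii) via the elementary observation that a real-rooted polynomial $\prod_i(x-\lambda_i)$ of degree $2n$ is even precisely when the multiset $\{\lambda_i\}$ is invariant under negation, which is exactly the paper's Lemma~\ref{p-even}. The only cosmetic difference is that you make the passage to the associated pair $H^a$ explicit via Proposition~\ref{associated duality}, whereas the paper leaves this step implicit in the statement ``our claim follows from\dots''.
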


\begin{proof}
Let $(G,H)$ be a symmetric pair $(SL(2n,\K),Sp(n,\K))$ where $K=\R$ or $\C$. 
Our claim follows from Theorem~\ref{application_of_pfister}, Lemma~\ref{p-even} below and the inequality $n\leq d(G)-d(H)=\begin{cases}n^2-1\ (\K=\R),\\ 2n^2-n-1\ (\K=\C)\end{cases}$ for $n\geq 2$. 
\end{proof}

We consider a symmetric pair $(G, H)=(SL(2n,\K), Sp(n,\K))$, where $\K=\R$ or $\C$. 
We realize a symmetric pair $(SL(2n,\K), Sp(n,\K))$ as follows. 
\begin{align*}
SL(2n,\K)&=\{g\in GL(2n,\K): \det g=1\}, \\
Sp(n,\K)&=\{g\in SL(2n,\K): {}^tgJ_ng=J_n\},
\end{align*}
where $J_n=\begin{pmatrix}0&-I_n\\I_n&0\end{pmatrix}$. 
Then, by taking a Cartan involution $\theta: g\mapsto \trans \overline{g}^{-1}$, we have
\begin{align*}
K&=\begin{cases}
SO(2n)\ (\K=\R),\\
SU(2n)\ (\K=\C),
\end{cases}\\
\mfp&=\Herm_0(2n,\K)=\{X\in M(2n,\K): {}^t\overline{X}=X, \trace X=0\},\\ 
\mfp_H&=\left\{\begin{pmatrix}A&B\\\overline{B}&-\overline{A}\end{pmatrix}:A\in \Herm(n,\K), B\in \Sym(n,\K)\right\}.
\end{align*}
We take a maximal abelian subspace $\frak{a}$ of $\frak{p}$ as follows. 
\[\frak{a}:=\left\{\diag(b_1,\cdots, b_{2n}): b_i\in \R\ (i=1,\cdots, 2n), \sum_{i=1}^{2n}b_i=0\right\}.  \]
Then we have 
\[
\mathfrak{a}_H=\{\operatorname{diag}(a_1,\cdots, a_n, -a_1,\cdots, -a_n):a_i\in \R\ (i=1,\cdots, n)\}. 
\]
\begin{lemma}\label{p-even}
For $X\in \frak{a}$, 
the following conditions are equivalent:
\begin{enumerate}
\item $X\in W_G\cdot \frak{a}_H$,
\item the characteristic polynomial $f_X(x)=\det(xI-X)$ is even. 
\end{enumerate}
\end{lemma}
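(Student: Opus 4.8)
The plan is to translate both conditions into statements about the multiset of diagonal entries of $X$ and then compare them directly. Write $X=\diag(b_1,\ldots,b_{2n})\in\frak{a}$, so that $f_X(x)=\prod_{i=1}^{2n}(x-b_i)$. Since $G=SL(2n,\K)$, the (restricted) Weyl group $W_G$ is the symmetric group $S_{2n}$ acting on $\frak{a}$ by permuting the diagonal entries; in both cases $\K=\R$ and $\K=\C$ the restricted root system is of type $A_{2n-1}$. Hence $W_G\cdot\frak{a}_H$ consists precisely of those diagonal matrices whose multiset of entries $\{b_1,\ldots,b_{2n}\}$ admits a partition into $n$ unordered pairs of the form $\{c,-c\}$, allowing $c=0$. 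This reformulation reduces the lemma to a purely combinatorial equivalence.

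For the characteristic-polynomial side, I would note that $f_X$ has degree $2n$ and that $f_X(-x)=\prod_i(-x-b_i)=\prod_i(x-(-b_i))$. Thus $f_X$ is even if and only if $f_X(-x)=f_X(x)$, which holds if and only if the multiset $\{b_1,\ldots,b_{2n}\}$ is invariant under the negation map $b\mapsto -b$. This makes condition (ii) equivalent to negation-invariance of the spectrum of $X$.

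The implication (i)$\Rightarrow$(ii) is then immediate: a partition into pairs $\{c,-c\}$ visibly produces a negation-invariant multiset. The substantive direction is (ii)$\Rightarrow$(i). Here I would argue as follows: negation-invariance means that for every $c\neq 0$ the multiplicity of $c$ equals the multiplicity of $-c$, so we may match the occurrences of $c$ with those of $-c$ to form pairs $\{c,-c\}$, exhausting all nonzero entries. Since the nonzero entries are thereby paired off, their total number is even; as the grand total is $2n$, the number of zero entries is also even, and these can be grouped into pairs $\{0,0\}=\{0,-0\}$. Collecting all pairs yields the required partition, so $X\in W_G\cdot\frak{a}_H$.

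The lemma is elementary, so there is no deep obstacle; the only point that needs care is the parity bookkeeping for the zero eigenvalues in (ii)$\Rightarrow$(i). One must observe that the evenness of $2n$ forces an even number of zeros once the nonzero entries have been paired, so that the zeros too can be distributed into admissible pairs. Everything else is a direct unwinding of the descriptions of $W_G\cdot\frak{a}_H$ and of evenness of $f_X$.
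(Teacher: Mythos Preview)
Your argument is correct and follows the same approach as the paper: both reduce the lemma to the elementary fact that $(x-b_1)\cdots(x-b_{2n})$ with $b_i\in\R$ is even if and only if some permutation of $(b_1,\ldots,b_{2n})$ has the form $(a_1,\ldots,a_n,-a_1,\ldots,-a_n)$, using that $W_G\simeq\frak{S}_{2n}$. The paper states this as a one-line observation, while you spell out the multiset and parity bookkeeping (in particular for the zero entries), but the content is the same.
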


\begin{proof}
It is clear from the fact that a polynomial $(x-b_1)\cdots (x-b_{2n})$ $(b_1,\cdots, b_{2n}\in \R)$ is even if and only if 
$(b_{\sigma(1)},\cdots,b_{\sigma(2n)})=(a_1,\cdots, a_n, -a_1,\cdots, -a_{n})$ for some $a_1,\cdots, a_n\in \R$ and $\sigma\in \frak{S}_{2n}\simeq W_G$. 
\end{proof}

\section{Maximality of non-compactness}\label{section:maximality}
In this section, we apply the idea of Kobayashi's comparison theorem \cite{kob92} to the tangential homogeneous space $G_\theta/H_\theta$. 
As a result, we show the non-existence of compact Clifford--Klein forms for six types of tangential symmetric spaces (Proposition~\ref{example_maximality}).

\begin{fact}[Kobayashi's comparison theorem {\cite[Thm.~1.5]{kob92}}]\label{maximality_kob}
Let $G/H$ be a homogeneous space of reductive type. 
If there exist a closed subgroup $H'$ reductive in $G$ 
satisfying the following two conditions, 
then $G/H$ does not admit compact Clifford--Klein forms. 
\begin{enumerate}
\item $\frak{a}_{H'}\subset W_G\cdot\frak{a}_H$, 
\item $d(H')>d(H)$. 
\end{enumerate}
Here, $W_G:=N_G(\frak{a})/Z_G(\frak{a})$ is the Weyl group of $G$, and $\frak{a}_H:=\frak{a}\cap \frak{h}$. 
\end{fact}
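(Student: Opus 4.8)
The plan is to argue by contradiction. Suppose $G/H$ admits a compact Clifford--Klein form $\Gamma\backslash G/H$, so that the discrete group $\Gamma$ acts on $G/H$ properly discontinuously and cocompactly. First I would replace $\Gamma$ by a torsion-free finite-index subgroup via Selberg's lemma; this preserves proper discontinuity, cocompactness and the cohomological dimension, so I may assume $\Gamma$ is torsion-free.

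The argument then rests on two inputs. The first is a \emph{transfer of properness} from $G/H$ to $G/H'$ supplied by condition (i). Kobayashi's criterion for a properly discontinuous action of a discrete group on a reductive homogeneous space depends only on the ``$\frak{a}$-data'' $\frak{a}(H)=\frak{a}\cap\Ad(K)\frak{p}_H=W_G\cdot\frak{a}_H$ of the isotropy subgroup, and it is monotone: if $\frak{a}(H')\subset\frak{a}(H)$, then every $\Gamma$ acting properly discontinuously on $G/H$ also acts properly discontinuously on $G/H'$. Since (i) reads $\frak{a}_{H'}\subset W_G\cdot\frak{a}_H$, hence $W_G\cdot\frak{a}_{H'}\subset W_G\cdot\frak{a}_H$, I obtain a properly discontinuous action of $\Gamma$ on $G/H'$ as well.

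The second input is Kobayashi's cohomological-dimension estimate: for a torsion-free discrete group $\Gamma$ acting properly discontinuously on a reductive homogeneous space $G/H$ one has $\operatorname{cd}(\Gamma)\le d(G)-d(H)$, with equality exactly when $\Gamma\backslash G/H$ is compact. This comes from the Cartan decomposition, under which $G/H$ is diffeomorphic to a vector bundle of rank $d(G)-d(H)$ over the compact base $K/K_H$; the compact base contributes nothing to the cohomological dimension of $\Gamma$, while the fibre accounts for the bound, and cocompactness forces the bound to be attained. Applying the equality case to the action on $G/H$ gives $\operatorname{cd}(\Gamma)=d(G)-d(H)$, and the inequality to the (possibly non-cocompact) action on $G/H'$ gives $\operatorname{cd}(\Gamma)\le d(G)-d(H')$. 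Together these yield $d(H')\le d(H)$, contradicting condition (ii).

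I expect the cohomological-dimension step to be the main obstacle: the properness transfer is comparatively soft once the monotone Cartan-projection criterion for discrete groups is available, whereas pinning down $\operatorname{cd}(\Gamma)$ --- in particular the equality in the cocompact case and the inequality in the merely proper case --- requires the homotopy-theoretic analysis of $\Gamma\backslash G/H$ through its bundle structure over $K/K_H$.
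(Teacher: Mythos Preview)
The paper does not supply its own proof of this statement: it is recorded as a \emph{Fact} quoted from \cite{kob92}, and the only comment the paper adds is that the tangential analogue (Fact~\ref{corollary_of_kob92}) is proved ``the same as in \cite{kob92}''. So there is nothing in the paper to compare your argument against line by line.

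That said, your sketch faithfully reconstructs Kobayashi's original proof in \cite{kob92}: the monotone transfer of proper discontinuity via the Cartan projection (so that $\frak a_{H'}\subset W_G\cdot\frak a_H$ forces $\Gamma$ to act properly discontinuously on $G/H'$ as well), combined with the cohomological-dimension inequality $\operatorname{cd}(\Gamma)\le d(G)-d(H)$ and its equality case for cocompact $\Gamma$, is exactly the mechanism used there. One small refinement: your heuristic for the $\operatorname{cd}$ bound (``the compact base $K/K_H$ contributes nothing'') is a bit loose, since $K/K_H$ is not contractible; Kobayashi's actual argument runs through the contractibility of $G/K$ and a comparison of the two quotients, but the conclusion and the numerical contradiction $d(H')\le d(H)$ are as you state.
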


\begin{fact}[a tangential analogue of Kobayashi's comparison theorem \cite{kobayoshi2}]\label{corollary_of_kob92}
If a homogeneous space $G/H$ of reductive type satisfies the assumption of Fact~\ref{maximality_kob}, 
then the tangential homogeneous space $G_\theta/H_\theta$ 
does not admit compact Clifford--Klein forms. 
\end{fact}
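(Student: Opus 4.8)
The plan is to deduce the non-existence directly from the tangential criterion of Fact~\ref{yoshino's criterion}, reproducing at the level of the subspace $V\subset\frak{p}$ the dimension obstruction that underlies Fact~\ref{maximality_kob}. Let $H'$ be the reductive subgroup furnished by the hypothesis, so that $\frak{a}_{H'}\subset W_G\cdot\frak{a}_H$ and $d(H')>d(H)$. First I would argue by contradiction: assume $G_\theta/H_\theta$ admits a compact Clifford--Klein form, and use the equivalence (i)$\Leftrightarrow$(iii) in Fact~\ref{yoshino's criterion} to produce a subspace $V\subset\frak{p}$ satisfying $V\cap\Ad(K)\frak{p}_H=\{0\}$ and $\dim V=d(G)-d(H)$.

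The key step is to transport the properness-type condition from $H$ to $H'$, that is, to establish the inclusion $\Ad(K)\frak{p}_{H'}\subseteq\Ad(K)\frak{p}_H$. For this I would use two standard facts. Every element of $\frak{p}_{H'}$ is $\Ad(K\cap H')$-conjugate into a fixed maximal abelian subspace $\frak{a}_{H'}$ of $\frak{p}_{H'}$, so $\Ad(K)\frak{p}_{H'}=\Ad(K)\frak{a}_{H'}$. On the other hand, $W_G\cdot\frak{a}_H\subset\Ad(K)\frak{p}_H$ (the inclusion already used in the proof of Theorem~\ref{application_of_pfister}). Combining these with the hypothesis $\frak{a}_{H'}\subset W_G\cdot\frak{a}_H$ gives $\frak{a}_{H'}\subset\Ad(K)\frak{p}_H$, and applying $\Ad(K)$ to both sides yields $\Ad(K)\frak{p}_{H'}=\Ad(K)\frak{a}_{H'}\subseteq\Ad(K)\frak{p}_H$.

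Granting this inclusion, the argument closes quickly. Since $V\cap\Ad(K)\frak{p}_H=\{0\}$, a fortiori $V\cap\Ad(K)\frak{p}_{H'}=\{0\}$, and in particular, because $\frak{p}_{H'}\subset\Ad(K)\frak{p}_{H'}$ (as $e\in K$), we obtain $V\cap\frak{p}_{H'}=\{0\}$. Two subspaces of $\frak{p}$ with trivial intersection satisfy $\dim V+\dim\frak{p}_{H'}\leq\dim\frak{p}$, whence $\dim V\leq d(G)-d(H')$. But $d(H')>d(H)$ forces $d(G)-d(H')<d(G)-d(H)=\dim V$, a contradiction. Hence no subspace $V$ as in Fact~\ref{yoshino's criterion}(iii) can exist, and therefore $G_\theta/H_\theta$ admits no compact Clifford--Klein form.

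The main obstacle I anticipate is the bookkeeping in the second paragraph: making the identity $\Ad(K)\frak{p}_{H'}=\Ad(K)\frak{a}_{H'}$ rigorous requires that $\frak{a}_{H'}$ be genuinely maximal abelian in $\frak{p}_{H'}$ and that the symbols $\frak{a}_{H'}$, $\frak{a}_H$ appearing in the hypothesis be interpreted consistently with the single fixed maximal abelian subspace $\frak{a}\subset\frak{p}$ used in Fact~\ref{yoshino's criterion}. Everything else — the passage from the reductive-type invariant $d(H')$ to a purely linear-algebraic dimension bound on $V$ — is exactly the tangential shadow of Kobayashi's comparison theorem and becomes routine once the inclusion $\Ad(K)\frak{p}_{H'}\subseteq\Ad(K)\frak{p}_H$ is in hand.
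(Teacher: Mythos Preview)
Your proposal is correct and is precisely the argument the paper has in mind: the paper's own ``proof'' consists of the single sentence ``The proof is the same as in \cite{kob92},'' and what you have written is exactly the tangential transcription of Kobayashi's comparison argument via Fact~\ref{yoshino's criterion}. The technical caveat you flag---that $\frak{a}_{H'}$ must be maximal abelian in $\frak{p}_{H'}$ and sit inside the fixed $\frak{a}$---is a standard normalization (one conjugates $H'$ so that a maximal abelian subspace of $\frak{p}_{H'}$ lies in $\frak{a}$) and is implicit in the hypothesis as stated.
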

The proof is the same as in \cite{kob92}. 

\begin{proposition}\label{example_maximality}
Let $(G, H)$ be a symmetric pair which is locally isomorphic to one of the following list and suppose that $G$ is connected. 
Then the tangential symmetric space $G_\theta/H_\theta$ does not 
admit compact Clifford--Klein forms. 
\begin{itemize}
\item $(G,H)=(SO^*(2(p+q)), SO^*(2p)\times SO^*(2q))$ $(2\leq p,q)$,
\item $(G,H)=(SO^*(2(p+q)), U(p,q))$ $(2\leq p,q)$, 
\item $(G,H)=(SO_0(p,q),SO_0(p_1,q_1)\times SO_0(p_2,q_2))$ $(0<p_1,p_2,q_1,q_2)$,
\item $(G,H)=(SU(p,q),S(U(p_1,q_1)\times U(p_2,q_2)))$ $(0<p_1,p_2,q_1,q_2)$, 
\item $(G,H)=(Sp(p,q),Sp(p_1,q_1)\times Sp(p_2,q_2))$ $(0<p_1,p_2,q_1,q_2)$, 
\item $(G,H,H^a)=(SO(2(p+q)-2,\C),SO(2p-1,\C)\times SO(2q-1,\C), SO_0(2p-1,2q-1))$ $(2\leq p\leq q)$.
\end{itemize}
\end{proposition}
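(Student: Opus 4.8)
The plan is to apply the tangential comparison theorem, Fact~\ref{corollary_of_kob92}: for each pair it suffices to produce a reductive subgroup $H'$ of $G$ meeting the two hypotheses of Fact~\ref{maximality_kob}, that is, $\frak{a}_{H'}\subset W_G\cdot\frak{a}_H$ and $d(H')>d(H)$. I would first use Proposition~\ref{associated duality} to cut down the list: $(SO^*(2(p+q)),U(p,q))$ is the associated pair of $(SO^*(2(p+q)),SO^*(2p)\times SO^*(2q))$, and the final item is already presented with its associated pair $H^a=SO_0(2p-1,2q-1)$, so in each associated class it is enough to treat one representative. This reduces everything to pairs in which $H=H_1\times H_2$ (or $S(H_1\times H_2)$) sits in $G$ in the evident block-diagonal way.

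For such a product pair the idea is to \emph{merge the two blocks into a single block of the same matrix family as $G$}, keeping the real rank fixed and enlarging the non-compact dimension. Writing $r_i=\rank_\R H_i$ and $r=r_1+r_2=\rank_\R H$, I take $H'$ to be the standardly embedded single-block subgroup of the same type as $G$, of real rank exactly $r$ and of the largest size the ambient signature permits: $SO_0(r,\max(p,q))$ in $SO_0(p,q)$, $SU(r,\max(p,q))$ in $SU(p,q)$, $Sp(r,\max(p,q))$ in $Sp(p,q)$, $SO^*(2s)$ with $\lfloor s/2\rfloor=r$ and $s$ maximal in $SO^*(2(p+q))$, and the analogous $SO(2r+1,\C)$ in $SO(2(p+q)-2,\C)$. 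Each such $H'$ is reductive in $G$, and being a standard block its split subspace $\frak{a}_{H'}$ is a coordinate subspace of $\frak{a}$ of dimension $r$, the same as $\frak{a}_H$; since the Weyl group $W_G$ of these classical groups is a (possibly even) signed-permutation group, which acts transitively on the set of $r$-dimensional coordinate subspaces, the inclusion $\frak{a}_{H'}\subset W_G\cdot\frak{a}_H$ holds automatically.

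It then remains to check $d(H')>d(H)$, which is an elementary count. For the $SO_0$, $SU$ and $Sp$ families a block of type $(a,b)$ has non-compact dimension $\kappa\,ab$ with $\kappa=1,2,4$ respectively, and $ab=\min(a,b)\cdot\max(a,b)$; hence $d(H)=\kappa\bigl(r_1\max(p_1,q_1)+r_2\max(p_2,q_2)\bigr)$ and $d(H')=\kappa\,r\max(p,q)$, and because $p_1,p_2,q_1,q_2$ are all positive one has $\max(p_i,q_i)<\max(p,q)$, so the claim reduces to the strict inequality $r_1\max(p_1,q_1)+r_2\max(p_2,q_2)<(r_1+r_2)\max(p,q)$. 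For $SO^*(2m)$ one uses $d=m(m-1)$ and for $SO(m,\C)$ one uses $d=\binom{m}{2}$, and the same merging again gains the cross terms.

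The hard part will be the honest verification of the first condition: I must fix the restricted root system and Weyl group of each $G$ and the precise coordinate form of $\frak{a}_H$ and $\frak{a}_{H'}$ in order to know that $\frak{a}_{H'}$ lands in a single $W_G$-translate of $\frak{a}_H$---delicate for $SO_0(p,q)$ and $SO^*$ with $p=q$, where $W_G$ realizes only \emph{even} sign changes. A secondary point is the parity and equality bookkeeping for the $SO^*$ and complex $SO$ families: when the signature forces $s=p+q$, or when $p=q$, the largest admissible block is $H'=G$ and the argument collapses to the Calabi--Markus phenomenon (Fact~\ref{Calabi-Markus}); I should confirm that in all remaining cases a proper subgroup $H'\neq G$ with strictly larger non-compact dimension actually exists.
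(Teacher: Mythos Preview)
Your approach is correct and is essentially the same as the paper's: the paper's proof simply invokes Fact~\ref{corollary_of_kob92} together with Proposition~\ref{associated duality} for the $SO^*$ associated pair, and then cites \cite[Examples~1.7, 1.9]{kob92} and \cite[Remark~3.5.8, Corollary~3.5.9]{kobayoshi} for the verification of the two hypotheses of Fact~\ref{maximality_kob} in each case, rather than redoing those computations. What you have sketched is precisely the content of those references. One minor difference: your ``single large block of real rank $r$'' choice of $H'$ is not always literally the subgroup used in \cite{kob92} (Kobayashi sometimes takes a block times a compact factor, e.g.\ $U(p_1,q;\K)\times U(p_2;\K)$), but your choice satisfies the same two conditions, so the argument goes through. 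The caveats you flag about realizing $\frak a_H$ and $\frak a_{H'}$ as coordinate subspaces and the type-$D$ even-sign issue are genuine but routine, and are exactly what is carried out in \cite{kob92}.
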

\begin{proof}
This comes from Fact~\ref{corollary_of_kob92} and the fact that 
($SO^*(2(p+q))$, $U(p,q)$) is the associated pair of ($SO^*(2(p+q))$, $SO^*(2p)\times SO^*(2q)$). 
See {\cite[Example~1.7, 1.9]{kob92}}, {\cite[Remark~3.5.8, Corollary~3.5.9]{kobayoshi}}. 
\end{proof}

Fact~\ref{corollary_of_kob92} can be generalized as follows: 
\begin{fact}[Corollary to Kobayashi--Yoshino {\cite[Thm.~5.3.2]{kobayoshi}}]
If there exists a linear subspace $W$ of $\frak{p}$ such that 
$W\subset \Ad(K)\frak{p}_H$ and $\dim W>\dim\frak{p}_H$, 
then $G_\theta/H_\theta$ does not admit compact Clifford--Klein forms. 
\end{fact}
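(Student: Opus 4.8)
The plan is to derive the non-existence directly from the tangential criterion of Kobayashi--Yoshino, using the equivalence between condition~(i) and condition~(iii) of Fact~\ref{yoshino's criterion}. Recall that condition~(iii) asserts the existence of a subspace $V\subset\frak{p}$ satisfying (a)' $V\cap\Ad(K)\frak{p}_H=\{0\}$ and (b) $\dim V+d(H)=d(G)$, where $d(G)=\dim\frak{p}$ and $d(H)=\dim\frak{p}_H$. Thus $G_\theta/H_\theta$ admits a compact Clifford--Klein form if and only if such a $V$ exists, and to prove the Fact it suffices to show that the hypothesis on $W$ rules out the existence of any such $V$.

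First I would suppose, toward a contradiction, that a subspace $V\subset\frak{p}$ satisfying (a)' and (b) exists. Condition~(b) pins down its dimension: $\dim V=d(G)-d(H)=\dim\frak{p}-\dim\frak{p}_H$. Next, since $W\subset\Ad(K)\frak{p}_H$ by hypothesis and $V\cap\Ad(K)\frak{p}_H=\{0\}$ by (a)', every $x\in V\cap W$ lies in $V\cap\Ad(K)\frak{p}_H=\{0\}$, so $V\cap W=\{0\}$ and the sum $V+W$ is direct. Here it is essential that $W$ is itself a \emph{linear} subspace, even though the cone $\Ad(K)\frak{p}_H$ need not be, since the dimension count in the next step uses $\dim(V+W)=\dim V+\dim W$.

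The final step is a dimension count inside $\frak{p}$. From $V\oplus W\subset\frak{p}$ we obtain $\dim V+\dim W\le\dim\frak{p}=d(G)$; substituting $\dim V=d(G)-d(H)$ gives $\dim W\le d(H)=\dim\frak{p}_H$, which contradicts the assumption $\dim W>\dim\frak{p}_H$. Hence no $V$ satisfying (a)' and (b) exists, and $G_\theta/H_\theta$ admits no compact Clifford--Klein form.

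I do not expect a genuine obstacle here: the argument is a single transversality-versus-dimension count, and the only points requiring care are invoking the correct form of the criterion (the one phrased with $\Ad(K)\frak{p}_H$) and correctly recording $d(G)=\dim\frak{p}$, $d(H)=\dim\frak{p}_H$. It is worth remarking that this Fact subsumes Fact~\ref{corollary_of_kob92}: given $H'$ as there, every $X\in\frak{p}_{H'}$ is $\Ad(K)$-conjugate into $\frak{a}_{H'}$, so the condition $\frak{a}_{H'}\subset W_G\cdot\frak{a}_H$ together with the $\Ad(K)$-invariance of $\Ad(K)\frak{p}_H$ forces $\frak{p}_{H'}\subset\Ad(K)\frak{p}_H$; thus $W:=\frak{p}_{H'}$ is a linear subspace with $\dim W=d(H')>d(H)=\dim\frak{p}_H$ of exactly the required kind.
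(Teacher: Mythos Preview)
Your argument is correct and is exactly the dimension-counting derivation the paper has in mind: the paper does not write out a proof at all but simply records the statement as a ``Corollary to Kobayashi--Yoshino [Thm.~5.3.2]'', i.e., an immediate consequence of Fact~\ref{yoshino's criterion}. Your added remark showing how this subsumes Fact~\ref{corollary_of_kob92} (via $\frak{p}_{H'}\subset\Ad(K)\frak{p}_H$ from $\frak{a}_{H'}\subset W_G\cdot\frak{a}_H$) is also accurate and matches the paper's assertion that this Fact generalizes Fact~\ref{corollary_of_kob92}.
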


\section{Non-triviality of homogeneous spaces of reductive type as vector bundles}\label{section:non-triviality}

\subsection{General method}
In this subsection, 
we review the following necessary condition for the existence of compact Clifford--Klein forms of tangential homogeneous spaces (Fact~\ref{non-triviality}), which is due to Kobayashi--Yoshino \cite{kobayoshi2}, and 
give some examples which are proved not to admit compact Clifford Klein forms by using Fact~\ref{non-triviality} (Proposition~\ref{examples_non-triviality}). 
\begin{fact}[Kobayashi--Yoshino{\cite[Thm. 18]{kobayoshi2}}]\label{non-triviality}
Let $G/H$ be a homogeneous space of reductive type. 
If the associated vector bundle $K\times_{K_H}\frak{p}/\frak{p}_H\to K/K_H$ is not a trivial bundle, 
then the tangential homogeneous space $G_\theta/H_\theta$ does not admit compact Clifford--Klein forms. 
\end{fact}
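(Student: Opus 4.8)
The plan is to derive the non-triviality obstruction directly from the linear-algebra criterion of Fact~\ref{yoshino's criterion}(iii), by recognizing the bundle $E:=K\times_{K_H}(\frak{p}/\frak{p}_H)$ as a quotient of a \emph{trivial} bundle. I would argue the contrapositive: assuming $G_\theta/H_\theta$ admits a compact Clifford--Klein form, I will produce a trivialization of $E$.

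By Fact~\ref{yoshino's criterion}(iii), such a form yields a subspace $V\subset\frak{p}$ with $V\cap\Ad(K)\frak{p}_H=\{0\}$ and $\dim V=d(G)-d(H)=\dim\frak{p}-\dim\frak{p}_H$. Since $\Ad(K)\frak{p}_H=\bigcup_{k\in K}\Ad(k)\frak{p}_H$, the intersection condition says $V\cap\Ad(k)\frak{p}_H=\{0\}$ for every $k\in K$; combined with the dimension count this gives the fibrewise direct-sum decomposition
\[ \frak{p}=V\oplus\Ad(k)\frak{p}_H \qquad (k\in K). \]

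The key structural step is the observation that, although $\frak{p}_H$ is only a $K_H$-module, $\frak{p}$ itself is a $K$-module, so $K\times_{K_H}\frak{p}$ is \emph{trivial}: the map $[k,X]\mapsto(kK_H,\Ad(k)X)$ is a well-defined isomorphism $K\times_{K_H}\frak{p}\xrightarrow{\sim}K/K_H\times\frak{p}$. Under it the sub-bundle $\mathcal{H}:=K\times_{K_H}\frak{p}_H$ becomes the sub-bundle of $K/K_H\times\frak{p}$ whose fibre over $kK_H$ is $\Ad(k)\frak{p}_H$ (well-defined because $\Ad(k_0)\frak{p}_H=\frak{p}_H$ for $k_0\in K_H$, using $\frak{p}_H=\frak{p}\cap\frak{h}$), and the quotient bundle is exactly $E$. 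The constant subspace $V$ then determines a trivial sub-bundle $K/K_H\times V$, and the displayed decomposition says it is a fibrewise complement to $\mathcal{H}$. Hence the composite $K/K_H\times V\hookrightarrow K/K_H\times\frak{p}\twoheadrightarrow E$ is a fibrewise isomorphism, i.e.\ a bundle isomorphism, so $E$ is trivial. Taking the contrapositive: if $E$ is non-trivial, no such $V$ exists and $G_\theta/H_\theta$ admits no compact Clifford--Klein form.

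The routine verifications are that $\Ad(k_0)$ preserves $\frak{p}_H$ for $k_0\in K_H$ and that the trivializing maps are smooth; both are immediate. I expect the only delicate point to be the bookkeeping that packages the pointwise statement $\frak{p}=V\oplus\Ad(k)\frak{p}_H$ into a genuine \emph{global} bundle splitting — specifically, that it is the \emph{constant} complement $V$ (rather than some $k$-varying complement) which descends through the $K_H$-quotient to a well-defined trivialization, which is exactly what lets the argument close in one line instead of requiring a partition-of-unity patching.
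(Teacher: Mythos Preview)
Your proof is correct and is essentially the same as the paper's: the paper deduces the fact from Fact~\ref{yoshino's criterion} together with Lemma~\ref{inj bundle map} (applied with $(\sigma,V)=(\Ad,\frak{p})$, $W_1=\frak{p}_H$, $W_2=V$), whose bundle map $[k,v]\mapsto [k,\Ad(k^{-1})v+\frak{p}_H]$ is exactly the trivialization you obtain by first identifying $K\times_{K_H}\frak{p}\simeq K/K_H\times\frak{p}$ via $[k,X]\mapsto(kK_H,\Ad(k)X)$ and then composing with the projection. Your slightly more geometric packaging (constant complement inside an ambient trivial bundle) and the paper's direct construction are the same argument.
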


\begin{proof}
This follows from the Fact~\ref{yoshino's criterion} and Lemma~\ref{inj bundle map} stated below by taking $(\sigma, V)=(\Ad, \frak{p})$ and $W_1=\frak{p}_H$. 
\end{proof}

\begin{fact}[Kobayashi {\cite[Lem.~2.7]{koba:reductive}}]
Let $G/H$ be a homogeneous space of reductive type. 
Then, there is a diffeomorphism $G/H\simeq K\times_{K_H}\mfp/\mfp_H$ as a manifold. 
\end{fact}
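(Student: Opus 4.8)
The plan is to realize the stated diffeomorphism through the geometry of the Riemannian symmetric space $M:=G/K$, using the Cartan decompositions of $G$ and $H$ to set up a candidate map and then the nonpositive curvature of $M$ to prove bijectivity. First I would fix a nondegenerate $\Ad(G)$-invariant symmetric bilinear form $B$ on $\frak{g}$ and pass to the associated positive-definite, $\Ad(K)$-invariant inner product $B_\theta(X,Y)=-B(X,\theta Y)$; its restriction to $\mfp$ is $\Ad(K)$-invariant, and since $\frak{p}_H$ is $\Ad(K_H)$-stable I obtain an $\Ad(K_H)$-invariant splitting $\mfp=\frak{p}_H\oplus\mathfrak{q}$ with $\mathfrak{q}:=\frak{p}_H^{\perp}\cap\mfp$. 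As $K_H$-modules $\mathfrak{q}\cong \mfp/\frak{p}_H$, so it suffices to prove $G/H\simeq K\times_{K_H}\mathfrak{q}$. Recalling the Cartan decompositions $K\times\mfp\xrightarrow{\sim}G$, $(k,X)\mapsto k\exp X$, and $K_H\times\frak{p}_H\xrightarrow{\sim}H$ (the latter because $\theta|_H$ is a Cartan involution of $H$), I would define $\Phi\colon K\times\mathfrak{q}\to G/H$ by $\Phi(k,Y)=k\exp(Y)H$. A one-line check using $m\exp(\Ad(m^{-1})Y)=\exp(Y)m$ for $m\in K_H\subset H$ shows $\Phi(km,\Ad(m^{-1})Y)=\Phi(k,Y)$, so $\Phi$ descends to $\overline{\Phi}\colon K\times_{K_H}\mathfrak{q}\to G/H$. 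A dimension count gives $\dim(K\times_{K_H}\mathfrak{q})=\dim K-\dim K_H+\dim\mfp-\dim\frak{p}_H=\dim G-\dim H$, as required.

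The core is to show $\overline{\Phi}$ is bijective, and here I would work in $M=G/K$, a Hadamard (complete, simply connected, nonpositively curved) manifold with base point $o:=eK$ and $T_oM\cong\mfp$. Since $H$ is reductive in $G$ and $K_H=H\cap K$ is maximal compact in $H$, the orbit $N:=H\cdot o\cong H/K_H$ is a closed, totally geodesic, convex submanifold with $T_oN=\frak{p}_H$, so its normal space at $o$ is exactly $\mathfrak{q}$. For surjectivity, given a coset $g_*H$ I would consider the convex set $g_*N=g_*H\cdot o$: properness of the $H$-action forces the distance from $o$ to attain its minimum at a unique footpoint $g_*h_0\cdot o$. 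Replacing $g_*$ by the representative $g:=g_*h_0$ (same coset), the minimizing geodesic from $o$ to $g\cdot o$ meets $gN$ perpendicularly at $g\cdot o$; translating by the isometry $g^{-1}$, the geodesic from $g^{-1}o$ to $o$ is perpendicular to $N$ at $o$, so its initial velocity lies in $(\frak{p}_H)^{\perp}=\mathfrak{q}$. Writing $g^{-1}o=\exp(w)\cdot o$ with $w\in\mathfrak{q}$ then yields $g=k\exp(Y)$ with $k\in K$ and $Y:=-w\in\mathfrak{q}$, whence $g_*H=k\exp(Y)H$ is in the image.

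For injectivity I would run the same footpoint principle in reverse: if $Y\in\mathfrak{q}$ then $k\exp(Y)\cdot o$ is perpendicular to $k\exp(Y)N$ (translate by $\exp(-Y)k^{-1}$ and use $Y\perp\frak{p}_H$), so it is the unique nearest point of the orbit to $o$. Hence $k_1\exp(Y_1)H=k_2\exp(Y_2)H$ forces $k_1\exp(Y_1)\cdot o=k_2\exp(Y_2)\cdot o$ in $M$; setting $u:=k_2^{-1}k_1$, the element $\exp(-Y_2)u\exp(Y_1)$ lies in $K$ (same footpoint) and in $H$ (same coset), hence in $K\cap H=K_H$. Uniqueness of the Cartan decomposition of $G$ then pins down $u\in K_H$ and $Y_2=\Ad(u)Y_1$, which is precisely the $K_H$-relation defining $K\times_{K_H}\mathfrak{q}$. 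Finally, for smoothness of $\overline{\Phi}^{-1}$ I would note that the nearest-point projection onto $N$ (equivalently, the inverse of the normal exponential map, which is a diffeomorphism onto $M$ for a complete convex submanifold of a Hadamard manifold) depends smoothly on the point; this exhibits $\overline{\Phi}^{-1}$ as a smooth map, so a smooth bijection with smooth inverse, i.e. a diffeomorphism.

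The step I expect to be the main obstacle is the rigorous justification of the footpoint machinery: that the translated orbit $gN$ is closed and its distance-to-$o$ function is proper (so a minimizer exists) and strictly convex (so it is unique with a perpendicular geodesic). This is exactly where the reductivity of $H$ (properness of the $H$-action on $G/K$, and $N$ being totally geodesic) and the nonpositive curvature of $M$ are indispensable; for general reductive $G$ one must also record that $M=G/K$ is a product of a symmetric space of noncompact type with a Euclidean factor, hence still Hadamard, so the convexity arguments apply verbatim.
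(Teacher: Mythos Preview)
The paper does not supply a proof of this statement: it is recorded as a \emph{Fact} with a citation to Kobayashi \cite[Lem.~2.7]{koba:reductive} and used as a black box. So there is no argument in the paper to compare yours against.

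Your proposal is a correct and complete proof, and it is essentially the standard one (the Mostow-type decomposition $G=K\exp(\mathfrak{q})H$ read off from the geometry of $G/K$). The key ingredients you use --- that $N=H\cdot o$ is a closed totally geodesic submanifold of the Hadamard manifold $G/K$, that the normal exponential map of such a submanifold is a global diffeomorphism, and hence that the nearest-point projection is well-defined, unique, and smooth --- are exactly what drives Kobayashi's original argument. Your surjectivity and injectivity steps are correct as written; the one place that could be tightened is the smoothness of $\overline{\Phi}^{-1}$, where you could alternatively note that $d\overline{\Phi}$ is an isomorphism at $[e,0]$ (since $\frak{g}/\frak{h}\cong(\frak{k}/\frak{k}_H)\oplus\mathfrak{q}$) and then use $K$-equivariance together with the fiberwise translations $[k,Y]\mapsto k\exp(Y)\cdot[e,0]$ to propagate this to every point, but your normal-exponential argument is equally valid.
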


The following Lemma~\ref{inj bundle map} is used in the proof of Fact~\ref{non-triviality}. 
\begin{lemma}\label{inj bundle map}
Let $K$ be a Lie group, $K_H$ a closed subgroup of $K$ 
and $(\sigma,V)$ a finite dimensional representation of $K$. 
Let $W_1$ be a $\sigma(K_H)$-invariant subspace of $V$ 
and $W_2$ a subspace of $V$ satisfying $\sigma(K)W_1\cap W_2=\{0\}$. 
Then there exists an injective bundle map 
$K/K_H \times W_2 \hookrightarrow K\times _{K_H} V/W_1$ over $K/K_H$.
In particular, in the case where $\dim W_1+\dim W_2=\dim V$, 
the vector bundle $K\times_{K_H} V/W_1$ is trivial. 
\end{lemma}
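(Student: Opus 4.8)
The plan is to exhibit an explicit bundle map and then observe that the hypothesis $\sigma(K)W_1\cap W_2=\{0\}$ is precisely the condition guaranteeing fiberwise injectivity. First I would define $\Phi\colon K/K_H\times W_2\to K\times_{K_H}(V/W_1)$ by
\[
\Phi(kK_H,\,w):=\bigl[k,\ \sigma(k)^{-1}w+W_1\bigr],
\]
where $[k,\,v+W_1]$ denotes the class of $(k,\,v+W_1)$ in the associated bundle, formed with respect to the $K_H$-action $(k,u)\mapsto(kh,\sigma(h)^{-1}u)$ on $K\times(V/W_1)$. Since $W_1$ is $\sigma(K_H)$-invariant, $\sigma(h)$ descends to $V/W_1$, so this associated bundle is well defined. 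I would check that $\Phi$ is independent of the representative $k$: replacing $k$ by $kh$ with $h\in K_H$ gives $\bigl[kh,\,\sigma(h)^{-1}\sigma(k)^{-1}w+W_1\bigr]=\bigl[k,\,\sigma(k)^{-1}w+W_1\bigr]$ by the defining relation of the associated bundle. The map clearly covers the identity on $K/K_H$, is linear in $w$ on each fiber, and is smooth because $k\mapsto\sigma(k)^{-1}$ is smooth.

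Next I would verify fiberwise injectivity. Over a point $kK_H$, the fiber of the target is identified with $V/W_1$ via $v+W_1\mapsto[k,\,v+W_1]$, under which $\Phi$ restricts to the linear map $w\mapsto\sigma(k)^{-1}w+W_1$. Its kernel consists of those $w\in W_2$ with $\sigma(k)^{-1}w\in W_1$, that is, $w\in\sigma(k)W_1\cap W_2\subseteq\sigma(K)W_1\cap W_2=\{0\}$. Hence the kernel is trivial and $\Phi$ is injective on each fiber, giving the desired injective bundle map.

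For the final assertion, suppose $\dim W_1+\dim W_2=\dim V$. Then the rank of the target bundle is $\dim(V/W_1)=\dim V-\dim W_1=\dim W_2$, which equals the rank of the source trivial bundle $K/K_H\times W_2$. An injective bundle map between two vector bundles of the same finite rank is a fiberwise linear isomorphism, hence a bundle isomorphism, so $K\times_{K_H}(V/W_1)$ is trivial. I expect the only delicate point to be fixing the $K_H$-action conventions so that $\Phi$ is genuinely well defined on $K/K_H$; once the formula is pinned down correctly, everything reduces to the elementary observation that injectivity of the fiber maps is literally the hypothesis $\sigma(K)W_1\cap W_2=\{0\}$, and the triviality statement is then just a rank count.
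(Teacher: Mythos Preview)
Your proof is correct and essentially identical to the paper's: both define the map $(k,w)\mapsto[k,\sigma(k^{-1})w+W_1]$, with the paper phrasing it as a $K_H$-equivariant map $K\times W_2\to K\times V/W_1$ before passing to the quotient, while you check well-definedness on $K/K_H$ directly. Your explicit verification of fiberwise injectivity and the rank count for triviality are slightly more detailed than the paper's, but the argument is the same.
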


\begin{remark}
In the above Lemma~\ref{inj bundle map}, 
the coefficient field of vector spaces $V, W$ can be considered as both $\R$ and $\C$. 
Moreover, we may just assume that $K$ is a topological group instead of a Lie group. 
\end{remark}

\begin{proof}[Proof of Lemma~\ref{inj bundle map}]
We define a bundle map $\tau$ over $K$ by 
\[ \tau: K\times W_2\to K\times V/W_1,\ (k,w_2)\mapsto (k,\sigma(k^{-1})w_2+W_1). \]
Then $\tau$ is injective and $K_H$-equivariant. 
Here, the right $K_H$-actions are given as follows:
\begin{align*}
(K\times W_2) \times K_H &\to K\times W_2,\quad ((k,w_2),h)\mapsto (kh, w_2),\\
(K\times V/W_1)\times K_H&\to K\times V/W_1, ((k,v+W_1),h)\mapsto (kh,\sigma(h^{-1})v+W_1).
\end{align*}
Therefore, we get the induced bundle map $\tilde{\tau}: K/K_H\times W_2\to K\times_{K_H}V/W_1$ over $K/K_H$, which is the desired injective bundle map. 
\end{proof}

In this subsection, we show that the tangential symmetric spaces associated with the following symmetric pairs do not admit compact Clifford--Klein forms. 
\begin{proposition}\label{examples_non-triviality}
Let $p,q_1$ and $q_2$ be positive integers. 
Suppose that 
$(G, H)$ a symmetric pair which is locally isomorphic to one of the following list and that $G$ is connected. 
Then $G_\theta/H_\theta$ does not 
admit compact Clifford--Klein forms. 
\begin{itemize}
\item $(G,H)=(SO_0(p,q_1+q_2), SO(q_1)\times SO_0(p,q_2))$ $(q_1\geq 2$ and $q_2\geq 2)$,
\item $(G,H)=(SU(p,q_1+q_2), S(U(q_1)\times U(p,q_2)))$ $(q_1\geq 2$ or $q_2\geq 2)$, 
\item $(G,H)=(Sp(p,q_1+q_2), Sp(q_1)\times Sp(p,q_2))$ $(p\geq 1$, $q_1, q_2\geq 1)$, 
\item $(G,H)=(SO_0(2p,2q), U(p,q))$ $(2\leq p, q)$. 
\end{itemize}
\end{proposition}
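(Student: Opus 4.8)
The plan is to verify, in each of the four families, the hypothesis of Fact~\ref{non-triviality}: that the real vector bundle $K\times_{K_H}\mfp/\mfp_H\to K/K_H$ is non-trivial. The first, uniform step is to fix the matrix realizations of Section~\ref{section:Pfister} and read off $K$, $K_H$, $\mfp$ and $\mfp_H$; in every case $K_H$ contains the whole compact factor acting on the ``larger'' side, so the base $K/K_H$ is a classical compact symmetric space and the fibre $\mfp/\mfp_H$ is an explicit $K_H$-representation. Since these bundles are orientable (the structure group $K_H$ is connected, so $w_1=0$), the natural invariant to detect non-triviality---and the one surviving passage to direct sums---is the first Pontrjagin class $p_1$; the whole argument then reduces to a cohomology computation on the base showing $p_1\neq 0$.

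For the first three families the base is a Grassmannian. In the real case $(SO_0(p,q_1+q_2),SO(q_1)\times SO_0(p,q_2))$ one gets $K=SO(p)\times SO(q_1+q_2)$, $K_H=SO(p)\times SO(q_1)\times SO(q_2)$, and $\mfp\cong\R^{p}\otimes\R^{q_1+q_2}$ with $\mfp_H\cong\R^{p}\otimes\R^{q_2}$, so $\mfp/\mfp_H\cong\R^{p}\otimes\R^{q_1}$ and $K/K_H$ is the oriented Grassmannian of $q_1$-planes in $\R^{q_1+q_2}$. Under this identification the bundle is $\gamma^{\oplus p}$ for the tautological rank-$q_1$ bundle $\gamma$, so $p_1(\gamma^{\oplus p})=p\,p_1(\gamma)$, which is non-zero precisely when the Grassmannian is large enough to carry $p_1(\gamma)$, i.e. when $q_1\geq 2$ and $q_2\geq 2$ (for $q_1=1$ or $q_2=1$ the base is a sphere and $p_1(\gamma)=0$). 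The complex family $(SU,\,S(U\times U))$ and the quaternionic family $(Sp,\,Sp\times Sp)$ are handled identically: $\mfp/\mfp_H\cong\C^{p}\otimes\C^{q_1}$ (resp. $\Ha^{p}\otimes\Ha^{q_1}$), the base is the complex (resp. quaternionic) Grassmannian, and the bundle is $p$ copies of the tautological bundle. Here the quaternionic Grassmannian always carries a non-zero $p_1$ of its tautological bundle---already $\Ha P^1=S^4$ does---so no extra hypothesis is needed, whereas in the complex case the smallest base $\C P^1$ (occurring when $q_1=q_2=1$) has trivial $H^4$, which is exactly why the hypothesis ``$q_1\geq 2$ or $q_2\geq 2$'' is imposed.

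The family $(SO_0(2p,2q),U(p,q))$ needs more care. Here $K=SO(2p)\times SO(2q)$, $K_H=U(p)\times U(q)$, and $\mfp\cong\operatorname{Hom}_\R(\R^{2q},\R^{2p})$; choosing the complex structures fixed by $U(p)$ and $U(q)$ splits $\operatorname{Hom}_\R$ into its complex-linear and complex-antilinear parts, with $\mfp_H$ the linear part and $\mfp/\mfp_H$ the antilinear one. Thus $K/K_H=SO(2p)/U(p)\times SO(2q)/U(q)$, and $\mfp/\mfp_H$ is the realification of the external tensor product $E_p\boxtimes E_q$ of the tautological complex bundles over the two factors. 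I would again compute $p_1$ of this real bundle; it is non-zero exactly when $p\geq 2$ and $q\geq 2$, the condition in the statement. This is consistent with Fact~\ref{corollary from L}, whose Table~1 shows that $SO_0(2,2q)/U(1,q)$ \emph{does} admit compact Clifford--Klein forms: for $p=1$ the factor $SO(2)/U(1)$ collapses to a point, $E_1$ is trivial, and the bundle becomes trivial.

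The main obstacle is precisely the non-vanishing of $p_1$, and the essential subtlety is that Fact~\ref{non-triviality} concerns the \emph{real} bundle: the complex first Chern class $c_1$ is always available and non-zero, but it does not by itself obstruct real triviality (over $\C P^1$, for instance, the realification of $\gamma^{\oplus p}$ is trivial when $p$ is even, being detected only by $w_2$). One must therefore work with a class living in $H^4$, namely $p_1$, and verify its non-vanishing by a genuine computation in the cohomology of the oriented real, complex and quaternionic Grassmannians and of $SO(2n)/U(n)$. The numerical hypotheses in the statement are exactly the conditions under which the relevant degree-four cohomology of the base is non-trivial; the excluded small cases either fall to another method (Adams' theorem, method (v)) or genuinely admit compact quotients. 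Once $p_1\neq 0$ is established in each family, Fact~\ref{non-triviality} immediately gives the non-existence of compact Clifford--Klein forms for $G_\theta/H_\theta$.
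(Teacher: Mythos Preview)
Your proposal is correct and takes essentially the same approach as the paper: invoke Fact~\ref{non-triviality} and detect non-triviality by showing $p_1(K\times_{K_H}\mfp/\mfp_H)\neq 0$, after identifying this bundle with $p$ copies of the tautological bundle over the appropriate Grassmannian (first three families, via Lemma~\ref{lemma:vector_bundle}) or with the tensor-product bundle $\C^p\otimes\C^q$ over $SO(2p)/U(p)\times SO(2q)/U(q)$ (fourth family). The paper carries out the non-vanishing of $p_1$ via an explicit Chern--Weil computation together with Cartan's description of the kernel of the Chern--Weil map (Facts~\ref{moritamethod} and~\ref{fact of ker omega}), which is precisely the ``genuine computation in the cohomology'' you allude to at the end and which recovers exactly the numerical conditions you state.
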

\begin{proof}
This follows from Facts~\ref{non-triviality} and \ref{pont_is_obst}, and Propositions~\ref{first Pont} and \ref{Pont of SO(2p,2q)/SU(p,q)}. 
\end{proof}

To show the non-triviality of real vector bundle, 
we prove the first Pontrjagin class does not vanish. 
Indeed, we have  
\begin{fact}[See \cite{dupont} for example]\label{pont_is_obst}
Let $E\to M$ be a real vector bundle. 
If the $i$-th Pontrjagin class $p_i(E)\in H^{4i}_{DR}(M,\R)$ does not vanish for some $i\geq 1$, 
then the bundle $E\to M$ is not trivial. 
\end{fact}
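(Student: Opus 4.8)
The plan is to exploit that the Pontrjagin classes, being characteristic classes, are invariants of the isomorphism type of $E$ and in particular vanish on trivial bundles; the stated implication is then just the contrapositive of this vanishing. Since the target group is de Rham cohomology $H^{4i}_{DR}(M,\R)$, the most self-contained route is through Chern--Weil theory.

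First I would fix an arbitrary linear connection $\nabla$ on $E$ with curvature $2$-form $\Omega$ (with values in $\operatorname{End}(E)$). The Pontrjagin classes are represented by closed $4i$-forms built from $\Ad$-invariant polynomials in $\Omega$; concretely, up to normalization, $p_i(E)$ is the class of the degree-$4i$ component of $\det(I-\tfrac{1}{2\pi}\Omega)$, a universal polynomial in the entries of $\Omega$ whose only constant term sits in degree $0$. A key input, which I would cite rather than reprove, is that these de Rham classes are independent of the choice of $\nabla$.

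Next I would specialize to the trivial bundle $E=M\times\R^k$ and choose the canonical product connection, i.e.\ exterior differentiation in the trivializing frame. Its curvature vanishes identically, $\Omega\equiv 0$. Because every Pontrjagin form of positive degree is a polynomial in the entries of $\Omega$ with vanishing constant term, all of them are identically zero, so $p_i(M\times\R^k)=0$ in $H^{4i}_{DR}(M,\R)$ for every $i\ge 1$. By connection-independence the same holds for any bundle isomorphic to a trivial one. Taking the contrapositive yields exactly the assertion: if $p_i(E)\neq 0$ for some $i\ge 1$, then $E$ cannot be trivial.

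An equally short alternative would be to invoke naturality directly: a trivial bundle is the pullback $c^{*}(\mathrm{pt}\times\R^k)$ along the constant map $c\colon M\to\mathrm{pt}$, whence $p_i(E)=c^{*}p_i(\mathrm{pt}\times\R^k)$ lies in the image of $c^{*}\colon H^{4i}_{DR}(\mathrm{pt},\R)\to H^{4i}_{DR}(M,\R)$, and $H^{4i}_{DR}(\mathrm{pt},\R)=0$ for $i\ge 1$. I do not expect a genuine obstacle here: the whole content is the standard functoriality and connection-independence of Chern--Weil theory, which is precisely what the reference \cite{dupont} supplies, and the statement itself is a formal consequence once those two properties are granted.
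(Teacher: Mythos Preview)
Your argument is correct and is precisely the standard Chern--Weil justification; both the curvature-vanishing route and the naturality-via-pullback route are valid. However, the paper itself does not supply a proof of this fact: it is stated as a \emph{Fact} with the citation ``See \cite{dupont} for example'' and is used without further comment. So there is nothing in the paper to compare against beyond the implicit appeal to the same Chern--Weil theory you have written out.
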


In the next subsection, we show the following Propositions~\ref{first Pont} and \ref{Pont of SO(2p,2q)/SU(p,q)}:
\begin{proposition}\label{first Pont}
Let $p, q_1, q_2$ be positive integers. 
\begin{enumerate}
\item[(a)] Let $(G$, $H)$$=$$(SO_0(p,q_1+q_2)$, $SO(q_1)\times SO_0(p,q_2))$. 
Then the first Pontrjagin class of the vector bundle $K\times_{K_H}\frak{p}/\frak{p}_H\to K/K_H$ does not vanish if and only if $q_1\geq 2$ and $q_2\geq 2$. 
\item[(b)] Let $(G$, $H)$$=$$(SU(p,q_1+q_2)$, $S(U(q_1)\times U(p,q_2)))$. 
Then the first Pontrjagin class of the vector bundle $K\times_{K_H}\frak{p}/\frak{p}_H\to K/K_H$ does not vanish if and only if $q_1\geq 2$ or $q_2 \geq 2$. 
\item[(c)] Let $(G,H)=(Sp(p,q_1+q_2),Sp(q_1)\times Sp(p,q_2))$. 
The first Pontrjagin class of the vector bundle $K\times_{K_H}\frak{p}/\frak{p}_H\to K/K_H$ does not vanish. 
\end{enumerate}
\end{proposition}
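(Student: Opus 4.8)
The plan is to identify the bundle $K\times_{K_H}\frak{p}/\frak{p}_H$ explicitly in each of the three cases as a Whitney sum of copies of a single tautological bundle over a Grassmannian, and then reduce the nonvanishing of $p_1$ to that of the tautological bundle alone. Writing $\K=\R,\C,\Ha$ for cases (a),(b),(c) respectively and letting $G_\K$ denote $SO,U,Sp$, the splitting $\K^{q_1+q_2}=\K^{q_1}\oplus\K^{q_2}$ gives $K=G_\K(p)\times G_\K(q_1+q_2)$ and $K_H=G_\K(p)\times G_\K(q_1)\times G_\K(q_2)$ (with the evident determinant condition when $G_\K=U$). The isotropy module is $\frak{p}=M(p,q_1+q_2;\K)$ and $\frak{p}_H=M(p,q_2;\K)$, so $\frak{p}/\frak{p}_H\cong M(p,q_1;\K)$ as a $K_H$-module, with $G_\K(q_2)$ acting trivially; consequently $K/K_H\cong\operatorname{Gr}_{q_1}(\K^{q_1+q_2})$ (the oriented Grassmannian $\widetilde{\operatorname{Gr}}_{q_1}(\R^{q_1+q_2})$ when $\K=\R$).

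Next I would trivialize the $G_\K(p)$-factor. Since this common factor sits in both $K$ and $K_H$ and acts trivially on the base, the map $[(s,m),v]\mapsto([m],s\cdot v)$ trivializes $K\times_{K_H}\K^p$; the residual structure group on $M(p,q_1;\K)$ reduces to $G_\K(q_1)$, and its action is just the standard action on the $q_1$ columns, with the $p$ rows now independent. Hence $E:=K\times_{K_H}(\frak{p}/\frak{p}_H)\cong S^{\oplus p}$, the $p$-fold Whitney sum of the tautological $\K$-bundle $S$ over $\operatorname{Gr}_{q_1}(\K^{q_1+q_2})$. Because real Pontrjagin classes are multiplicative under Whitney sum with $\R$ coefficients, $p_1(E)=p\cdot p_1(S)$ (viewing $S$ as a real bundle in the complex and quaternionic cases); as $p\geq1$ and the coefficients form a field of characteristic zero, the proposition reduces to deciding when $p_1(S)\neq0$ in $H^4(\operatorname{Gr}_{q_1}(\K^{q_1+q_2});\R)$.

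Finally I would settle this vanishing question by degenerating or restricting to small model spaces. For the vanishing half: when $q_1=1$ the Grassmannian is a sphere and $S$ is a $\K$-line bundle, so $p_1(S)=0$ in the real case, while $q_1=q_2=1$ forces $H^4(\C P^1)=0$ in the complex case; when $q_2=1$ the relation $S\oplus Q=\underline{\K^{q_1+q_2}}$ with $Q$ a $\K$-line bundle forces $p_1(S)=-p_1(Q)=0$ in the real case. For the nonvanishing half I would restrict along the inclusion fixing a complementary plane, $\K P^{q_2}=\operatorname{Gr}_1(\K^{1+q_2})\hookrightarrow\operatorname{Gr}_{q_1}(\K^{q_1+q_2})$ (and its dual obtained by swapping $q_1,q_2$ via $S\leftrightarrow Q$), under which $S$ pulls back to (tautological $\K$-line)$\,\oplus\,$(trivial), so that $\iota^*p_1(S)$ equals $p_1$ of the tautological line bundle on projective space: this is $h^2\neq0$ on $\C P^{q_2}$ for $q_2\geq2$, and the nonzero generator on $\Ha P^{q_2}$ for every $q_2\geq1$ (including $\Ha P^1=S^4$), which yields the unconditional statement (c).

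The one situation not reached by a projective-space restriction is $\K=\R$ with $q_1,q_2\geq2$, where the sphere $\operatorname{Gr}_1(\R^{1+q_2})$ only carries a line bundle. There I would instead restrict to $\widetilde{\operatorname{Gr}}_2(\R^4)\cong S^2\times S^2$ via $W\mapsto W\oplus\R^{q_1-2}$ (an embedding that requires exactly $q_1,q_2\geq2$), so that $S$ pulls back to $S'\oplus\underline{\R^{q_1-2}}$ with $S'$ the tautological oriented $2$-plane bundle, and use that for a rank-$2$ oriented real bundle $p_1(S')=e(S')^2$, which is the nonzero class $\pm2\,ab\in H^4(S^2\times S^2;\R)$. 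I expect this last nonvanishing step — pinning down $p_1$ of the tautological bundle on $\widetilde{\operatorname{Gr}}_2(\R^4)$, and more generally ensuring that each restriction map detects a genuinely nonzero real class rather than a torsion class killed over $\R$ — to be the main obstacle, since it is where the characteristic-class bookkeeping must be carried out with care.
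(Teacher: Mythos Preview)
Your reduction to the tautological bundle is exactly what the paper does: Lemma~6.1 gives the identification $K\times_{K_H}\frak{p}/\frak{p}_H\cong U(q;\K)\times_{U(q_1;\K)\times U(q_2;\K)}\frak{p}/\frak{p}_H$, and the proof then writes this as $E^{\oplus p}$ and uses Whitney's sum formula to get $p_1=p\cdot p_1(E)$, precisely as you do. So through the first two paragraphs your argument and the paper's coincide.

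Where you diverge is in deciding when $p_1(E)\neq 0$. The paper proceeds algebraically via Chern--Weil theory: it expresses $p_1(E)$ in terms of the Chern classes (for $\K=\C$, $p_1(E)=c_1(E)^2-2c_2(E)$), writes this as the image under the Chern--Weil map $\omega$ of an explicit element of $S(\frak{k}_H^*)^{K_H}$, and then invokes Cartan's description of $\ker\omega$ (Fact~6.6) as the ideal generated by restrictions of positive-degree $W$-invariants. The vanishing question becomes an explicit ideal-membership check in a polynomial ring. Your approach is instead geometric: you pull back along embeddings of projective spaces (and, in the real case, of $\widetilde{\operatorname{Gr}}_2(\R^4)\cong S^2\times S^2$) where $p_1$ of the tautological bundle is known, and use $S\oplus Q$ trivial for the dual cases. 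Both are correct; the paper's method is more uniform and mechanical (and scales to higher $p_i$), while yours is more elementary in that it avoids Chern--Weil theory and Fact~6.6 entirely, at the cost of needing separate model spaces for each $\K$.

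Your worry about the $\widetilde{\operatorname{Gr}}_2(\R^4)$ step is unfounded: for an oriented real $2$-plane bundle one has $p_1=e^2$, the Euler class of $S'$ is $a\pm b$ in $H^2(S^2\times S^2)$, and its square $\pm 2ab$ generates $H^4$; there is no torsion issue since all the target groups you restrict to ($H^4$ of $\C P^n$, $\Ha P^n$, $S^2\times S^2$) are free.
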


\begin{proposition}\label{Pont of SO(2p,2q)/SU(p,q)}
Let $(G$, $H)=(SO_0(2p,2q)$, $U(p,q))$ $(1\leq p\leq q)$. 
Then the first Pontrjagin class of the vector bundle $K\times_{K_H}\mfp/\mfp_H\to K/K_H$ does not vanish if and only if $p\geq 2$. 
\end{proposition}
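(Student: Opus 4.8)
The plan is to realize the bundle $K\times_{K_H}\mfp/\mfp_H\to K/K_H$ as the realification of a complex vector bundle and to read off $p_1$ from Chern classes.

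First I would set up the compact data. For $G=SO_0(2p,2q)$ one has $K=SO(2p)\times SO(2q)$, and for $H=U(p,q)$ one has $K_H=U(p)\times U(q)$, so $K/K_H=SO(2p)/U(p)\times SO(2q)/U(q)$. Let $\rho_p$, $\rho_q$ be the standard complex representations of $U(p)$, $U(q)$, and write $(\,\cdot\,)^\R$ for realification. The isotropy representation $\mfp\cong M(2p\times 2q,\R)$ restricts to $K_H$ as $\rho_p^\R\otimes_\R\rho_q^\R$, while $\mfp_H$, being the tangent representation of $U(p,q)/(U(p)\times U(q))$, is $(\rho_p\otimes_\C\overline{\rho_q})^\R$. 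Using the real $K_H$-module decomposition $\rho_p^\R\otimes_\R\rho_q^\R\cong(\rho_p\otimes_\C\rho_q)^\R\oplus(\rho_p\otimes_\C\overline{\rho_q})^\R$, I would conclude that $\mfp/\mfp_H\cong(\rho_p\otimes_\C\rho_q)^\R$ as $K_H$-modules. Consequently the bundle in question is the realification of the complex bundle $\mathcal E:=\mathcal V_p\otimes_\C\mathcal V_q$, where $\mathcal V_p$, $\mathcal V_q$ are the tautological rank-$p$, rank-$q$ bundles pulled back from the Hermitian symmetric factors $X_p:=SO(2p)/U(p)$, $X_q:=SO(2q)/U(q)$.

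Next I would compute $p_1(\mathcal E^\R)=c_1(\mathcal E)^2-2c_2(\mathcal E)$. Applying the splitting principle to the tensor product $\mathcal V_p\otimes\mathcal V_q$ (whose Chern roots are the sums $x_i+y_j$ of the roots of the factors) and collecting terms, a short bookkeeping gives
\[
p_1(\mathcal E^\R)=q\,p_1(\mathcal V_p^\R)+2\,c_1(\mathcal V_p)\,c_1(\mathcal V_q)+p\,p_1(\mathcal V_q^\R).
\]
Now $\mathcal V_p^\R=K\times_{K_H}\rho_p^\R$ is trivial: $\rho_p^\R$ is the restriction to $K_H$ of the standard representation of $SO(2p)$ (with $SO(2q)$ acting trivially), and an associated bundle built from a $K_H$-representation that extends to all of $K$ is trivial. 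Hence $p_1(\mathcal V_p^\R)=0$, and likewise $p_1(\mathcal V_q^\R)=0$, leaving
\[
p_1(\mathcal E^\R)=2\,c_1(\mathcal V_p)\,c_1(\mathcal V_q),
\]
the overall sign being immaterial for the question of vanishing.

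Finally I would decide when $c_1(\mathcal V_p)\,c_1(\mathcal V_q)\ne0$ in $H^4(K/K_H;\R)$. For $n\ge2$ the space $X_n=SO(2n)/U(n)$ is a simply connected compact irreducible Hermitian symmetric space of type DIII, whose holomorphic tangent bundle is $\Lambda^2\mathcal V_n$ (up to conjugation); thus $c_1(T^{1,0}X_n)=\pm(n-1)\,c_1(\mathcal V_n)$, which is nonzero because such a space is Fano, so $c_1(\mathcal V_n)\ne0$ for $n\ge2$, while for $n=1$ the space $X_1$ is a point and $c_1(\mathcal V_1)=0$. Since $X_p$ and $X_q$ are simply connected, the Künneth isomorphism embeds $H^2(X_p)\otimes H^2(X_q)$ into $H^4(X_p\times X_q)$, so $c_1(\mathcal V_p)\,c_1(\mathcal V_q)\ne0$ precisely when both factors are nonzero, i.e. when $p\ge2$ and $q\ge2$; under the standing hypothesis $1\le p\le q$ this is equivalent to $p\ge2$, as asserted. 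The main obstacle is the first step: correctly identifying $\mfp/\mfp_H$ as the realified tensor product $(\rho_p\otimes_\C\rho_q)^\R$ — in particular verifying the realification splitting and pinning down $\mfp_H$ as $(\rho_p\otimes_\C\overline{\rho_q})^\R$. Once that identification is secured, the Chern-class bookkeeping and the Fano/Künneth non-vanishing argument are routine.
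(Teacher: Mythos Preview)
Your argument is correct and shares with the paper the same identification of the bundle as the realification of $\mathcal V_p\otimes_\C\mathcal V_q$ over $SO(2p)/U(p)\times SO(2q)/U(q)$, and the same reduction of $p_1$ to $c_1^2-2c_2$. The difference lies in how nonvanishing is decided. The paper stays inside the Chern--Weil formalism: it writes $p_1(E)=\omega(p_1\otimes 1+1\otimes p_1+2\,c_1\otimes c_1)$ and appeals to Cartan's description of $\ker\omega$ (Fact~\ref{fact of ker omega}) to determine when this class is zero. You instead observe that $\mathcal V_p^\R$ and $\mathcal V_q^\R$ are trivial (their structure group extends from $K_H$ to $K$), so the two outer terms already vanish in cohomology and only $2\,c_1(\mathcal V_p)\,c_1(\mathcal V_q)$ survives; you then settle its nonvanishing via the Fano property of $SO(2n)/U(n)$ for $n\ge 2$ together with K\"unneth. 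Your route sidesteps Fact~\ref{fact of ker omega} at the cost of a small input from Hermitian symmetric space theory, while the paper's route remains entirely within the Chern--Weil machinery it has already set up; the two are logically equivalent, since your triviality observation is exactly the statement that $p_1\otimes 1$ and $1\otimes p_1$ lie in the image of the restriction map and hence in $\ker\omega$.
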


\begin{remark}
Propositions~\ref{first Pont} and \ref{Pont of SO(2p,2q)/SU(p,q)} are probably well-known to the topologists. 
However, the author could not find an appropriate reference, so we give an outline of the proofs. 
\end{remark}

\begin{remark}
In the reductive case, Kobayashi--Ono \cite{kobayashi-ono} discovered an obstruction to the existence of compact quotients using de Rham cohomology and characteristic classes. 
This idea was repeatedly used in Morita \cite{morita} and Tholozan \cite{tholozan}. 
They also did some computations similar (but not directly linked) to the proofs of Propositions \ref{first Pont} and \ref{Pont of SO(2p,2q)/SU(p,q)}. 
\end{remark}

By using the following fact, 
we can easily calculate the Pontrjagin class of associated bundles. 
\begin{fact}[See \cite{dupont} for example.]\label{moritamethod}
Let $G$ be a connected compact Lie group, 
$\varpi:P\rightarrow M $ a principal $G$-bundle, 
$\rho : G\rightarrow SO(V)$ a representation of $G$ and 
$E:=P\times _G V$ the associated bundle. 
Then for any $f\in S^k(\frak{so}(V)^*)^{SO(V)}$, 
the following equality holds. 
\[ [f(R)]= \omega \circ d\rho^* (f)\in H^{2k}_{DR}(M,\R), \]
where $R$ is a curvature on $E$ 
and $\omega : S(\frak{g}^*)^G\rightarrow H^*_{DR}(M,\R)$ is the Chern--Weil map. 
\end{fact}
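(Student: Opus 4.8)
The plan is to prove this by reducing it to the \emph{naturality} (functoriality) of the Chern--Weil homomorphism with respect to the change of structure group induced by $\rho$. Since $\rho$ maps into $SO(V)$, the associated bundle $E=P\times_G V$ is an oriented Euclidean vector bundle, and $f(R)$ is by construction a Chern--Weil representative of the corresponding characteristic class of $E$. The essential point is that both sides of the asserted identity are Chern--Weil forms built from a \emph{single} curvature, related only by the fixed linear map $d\rho$.

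First I would recall that, by standard Chern--Weil theory, the de Rham class $[f(R)]$ is independent of the chosen connection on $E$; hence I am free to compute it using a connection induced from $P$. So fix a principal connection on $P\to M$ and write $\Omega$ for its $\frak{g}$-valued curvature $2$-form, which descends to a form on $M$ valued in the adjoint bundle $\Ad(P):=P\times_G\frak{g}$. The representation $\rho:G\to SO(V)$ extends the structure group: one forms the principal $SO(V)$-bundle $Q:=P\times_G SO(V)$ (with $G$ acting through $\rho$ and left translation), and one checks $E\cong Q\times_{SO(V)}V$, so that $Q$ is precisely the oriented orthonormal frame bundle encoding the $SO(V)$-structure of $E$. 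The connection on $P$ induces a principal connection on $Q$, whose associated connection on $E$ I take as the one computing $R$.

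The key step is the curvature identity $R=d\rho(\Omega)$. It holds because $d\rho:\frak{g}\to\frak{so}(V)$ is $G$-equivariant, intertwining $\Ad_G$ on $\frak{g}$ with $\Ad_{SO(V)}\circ\rho$ on $\frak{so}(V)$; hence it descends to a bundle map $\Ad(P)\to\Ad(Q)=P\times_G\frak{so}(V)$ carrying the curvature of $P$ to the curvature of $Q$, and unwinding the definitions identifies this $\Ad(Q)$-valued $2$-form with $R$ through the isomorphism $\Ad(Q)\cong\operatorname{End}(E)^{\mathrm{skew}}$. Granting this, for $f\in S^k(\frak{so}(V)^*)^{SO(V)}$ the definition of the pullback $d\rho^*f\in S^k(\frak{g}^*)^G$, namely $(d\rho^*f)(X)=f(d\rho(X))$, yields the pointwise identity of differential forms $f(R)=f(d\rho(\Omega))=(d\rho^*f)(\Omega)$ on $M$. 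Since the right-hand side is by definition the Chern--Weil form of the $G$-bundle $P$ representing $\omega(d\rho^*f)$, passing to cohomology gives $[f(R)]=\omega\circ d\rho^*(f)$.

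The main obstacle I expect is the careful bookkeeping behind $R=d\rho(\Omega)$: one must verify the $G$-equivariance of $d\rho$ so that it genuinely descends to the bundle map $\Ad(P)\to\Ad(Q)$, confirm that the connection induced on $Q$ (equivalently on $E$) is the one whose curvature enters $f(R)$, and reconcile the normalization conventions relating the $\frak{so}(V)$-valued curvature of the frame bundle $Q$ with the curvature endomorphism $R$ of $E$. Once these identifications are in place, the remaining manipulation of invariant polynomials is purely formal.
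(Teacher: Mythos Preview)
Your argument is correct and is precisely the standard naturality proof of this statement: fix a connection on $P$, push the curvature forward by $d\rho$ to obtain the curvature of the induced connection on the associated frame bundle $Q$, and then use that $d\rho^*f$ evaluated on $\Omega$ equals $f$ evaluated on $d\rho(\Omega)=R$.

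The paper itself does not prove this statement; it records it as a \emph{Fact} with a reference to Dupont's textbook, so there is nothing to compare against. Your write-up supplies exactly the argument one finds in such references. The only refinement I would suggest is that your phrase ``the pointwise identity of differential forms $f(R)=f(d\rho(\Omega))=(d\rho^*f)(\Omega)$'' deserves one more line of justification: invariant polynomials on a Lie algebra are evaluated on curvature by polarization, so one should note that $d\rho$ is linear and hence commutes with the multilinear extension used to insert $2$-form arguments. This is routine, but since you flag the bookkeeping behind $R=d\rho(\Omega)$ as the main obstacle, it is worth recording that the same equivariance also guarantees $d\rho^*f\in S^k(\frak{g}^*)^G$ lands in the $G$-invariants, which is what makes $\omega(d\rho^*f)$ well defined.
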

To determine whether or not the Pontrjagin class vanishes, we use the following: 
\begin{fact}[{\cite{cartan}}, See also {\cite{supersymmetry}}]\label{fact of ker omega}
Let $K$ be a connected compact Lie group and $K_H$ its closed connected subgroup.
Let $\omega:S(\mathfrak{t}_{H}^*)^{W_H} \rightarrow H_{DR}^*(K/K_H;\R)$ be the Chern--Weil map. Then $\operatorname{ker}\omega$ can be written as follows.
\[
\operatorname{ker}\omega=
(\text{ideal generated by } \bigoplus _{k=1}^\infty \operatorname{Im}(\operatorname{rest} :S^k(\mathfrak{t}^*)^W\rightarrow S^k(\mathfrak{t}_H^*)^{W_H})\ \text{in}\ S(\mathfrak{t}_H^*)^{W_H}), 
\]
where $\operatorname{rest} $ is the restriction map, and $\frak{t}$, $\frak{t}_H$ are Lie algebras of the maximal tori of $K$, $K_H$, respectively. $W$, $W_H$ are the Weyl groups of $K$, $K_H$, respectively. 
\end{fact}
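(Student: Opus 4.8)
The plan is to realize $\omega$ as the map in cohomology induced by the classifying map of the principal $K_H$-bundle $K\to K/K_H$, and then to extract $\ker\omega$ from a universal fibration. Attached to the inclusion $K_H\hookrightarrow K$ there is the fibration $K/K_H\to BK_H\xrightarrow{p}BK$, whose fiber inclusion $\iota\colon K/K_H\hookrightarrow BK_H$ is, up to homotopy, the classifying map of $K\to K/K_H$ (the restriction of the universal $K_H$-bundle to the fiber over the basepoint is exactly $K\to K/K_H$). By Borel's theorem $H^*(BK_H;\R)\cong S(\mathfrak{t}_H^*)^{W_H}$ and $H^*(BK;\R)\cong S(\mathfrak{t}^*)^W$ (ordinary real cohomology, agreeing with de Rham cohomology on the manifold $K/K_H$), and under these identifications $p^*$ is the restriction map $\operatorname{rest}$. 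Since the Chern--Weil map of the universal $K_H$-bundle is precisely the Borel isomorphism, $\omega$ becomes $\iota^*$, restriction to the fiber, and the problem is reduced to a question about this fibration.

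One inclusion is then immediate. If $x=\operatorname{rest}(y)=p^*(y)$ for some $y\in H^{>0}(BK;\R)$, then $\omega(x)=\iota^*p^*(y)=(p\circ\iota)^*(y)$, and $p\circ\iota$ is the constant map to the basepoint of $BK$, so $(p\circ\iota)^*$ vanishes on positive degrees; hence $\omega(x)=0$. Because $\omega$ is an algebra homomorphism, the entire ideal generated by $\bigoplus_{k\geq1}\operatorname{Im}(\operatorname{rest}\colon S^k(\mathfrak{t}^*)^W\to S^k(\mathfrak{t}_H^*)^{W_H})$ lies in $\ker\omega$.

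The reverse inclusion is the crux, and here I would run the Eilenberg--Moore spectral sequence of the fibration (valid since $BK$ is simply connected), which converges to $H^*_{DR}(K/K_H;\R)$ with $E_2\cong\operatorname{Tor}_{S(\mathfrak{t}^*)^W}(S(\mathfrak{t}_H^*)^{W_H},\R)$, the module structures being $\operatorname{rest}$ and the augmentation. The edge homomorphism is exactly $\omega$, and its image is the surviving $0$-column $E_\infty^{0,*}$, a subquotient of $E_2^{0,*}=S(\mathfrak{t}_H^*)^{W_H}\otimes_{S(\mathfrak{t}^*)^W}\R$, whose kernel as a quotient of $S(\mathfrak{t}_H^*)^{W_H}$ is precisely the stated ideal. \emph{The main obstacle is to guarantee that $E_\infty^{0,*}=E_2^{0,*}$}, i.e.\ that no higher differential enlarges $\ker\omega$ beyond this ideal. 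This degeneration is forced by working over a field of characteristic zero together with the polynomiality of $H^*(BK;\R)=S(\mathfrak{t}^*)^W$ (equivalently, formality of $K/K_H$); granting it, $\ker\omega$ is exactly the kernel of $S(\mathfrak{t}_H^*)^{W_H}\twoheadrightarrow E_2^{0,*}$, namely the ideal.

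Alternatively, and closer to Cartan's original treatment, I would bypass the spectral sequence and compute $H^*_{DR}(K/K_H;\R)$ directly from the Cartan model built on the Weil algebra $W(\mathfrak{k}_H)$: a choice of connection realizes $\omega$ as the map induced on basic cohomology, and transgressing the polynomial generators of $S(\mathfrak{t}^*)^W$ produces exactly the relations cutting out the ideal. This route trades the degeneration argument for the bookkeeping of the Weil algebra and the Chevalley restriction isomorphism $\operatorname{Inv}(\mathfrak{k}_H^*)\cong S(\mathfrak{t}_H^*)^{W_H}$. In either approach the decisive ingredients are the same: Borel's computation of $H^*(BK_H)$ and $H^*(BK)$, the identification of $\omega$ with the fiber restriction $\iota^*$, and the polynomiality of $S(\mathfrak{t}^*)^W$ in characteristic zero.
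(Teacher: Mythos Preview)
The paper does not prove this statement: it is quoted as a classical fact with references to Cartan \cite{cartan} and \cite{supersymmetry}, and is then used as a black box in the computations of Propositions~\ref{first Pont} and~\ref{Pont of SO(2p,2q)/SU(p,q)}. So there is no proof in the paper to compare against.

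Your sketch is a correct outline of the standard argument, and your two routes are exactly the two classical ones. The identification of $\omega$ with the fiber restriction $\iota^*$ via Borel's isomorphisms is right, and the easy inclusion is fine. For the hard inclusion, the Eilenberg--Moore route is valid, but your justification of the degeneration is a bit loose: polynomiality of $H^*(BK;\R)$ alone does not force $E_2=E_\infty$, and ``formality of $K/K_H$'' is not literally equivalent to that polynomiality. What you need is formality of the map $BK_H\to BK$ over $\R$ (both spaces have polynomial cohomology concentrated in even degrees, hence are formal, and the map is formal by Deligne--Griffiths--Morgan--Sullivan type arguments), which then gives $H^*(K/K_H;\R)\cong\operatorname{Tor}_{H^*(BK)}(H^*(BK_H),\R)$ and the degeneration you want. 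Your second route, the Cartan--Weil model, is precisely how the cited reference \cite{cartan} proves it and avoids this subtlety by computing $H^*_{DR}(K/K_H)$ directly as the cohomology of a Koszul-type complex; that is the more self-contained path here.
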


\subsection{Calculation of first Pontrjagin class for Grassmannian manifolds}\label{calc of Pont}
In this subsection, we give outlines of proofs of Propositions~\ref{first Pont} and \ref{Pont of SO(2p,2q)/SU(p,q)}. 
Here we use Fact~\ref{moritamethod} and \ref{fact of ker omega} to check whether the first Pontrjagin class vanishes or not. 
\begin{lemma}\label{lemma:vector_bundle}
Let $\K=\R$, $\C$ or $\Ha$. 
Under the notation as in Proposition~\ref{first Pont}, 
for $G=U(p,q;\K)$, we have the following $U(q)$-equivariant vector bundle isomorphism:
\begin{align*}
K\times _{K_H}\frak{p}/\frak{p}_H\simeq U(q;\K)\times _{U(q_1;\K)\times U(q_2;\K)}\frak{p}/\frak{p}_H. 
\end{align*}
Here, $q=q_1+q_2$. 
See \cite[Sect. 7.1]{kobayashiOshima} for notation. 
\end{lemma}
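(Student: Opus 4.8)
```latex
The plan is to prove the claimed $U(q)$-equivariant vector bundle isomorphism by exhibiting both sides as associated bundles over the same base and matching them explicitly. First I would make the groups concrete: write $G=U(p,q;\K)$ with the standard involution $\sigma$ defining $H$, take the Cartan involution $\theta$ so that $K=U(p;\K)\times U(q;\K)$ and $K_H=U(p;\K)\times U(q_1;\K)\times U(q_2;\K)$ (using $q=q_1+q_2$), and identify $\frak{p}$ with the off-diagonal block $\Herm$-type space on which $K$ acts by the obvious two-sided multiplication. The key observation is that the $U(p;\K)$-factor is common to both $K$ and $K_H$, so it contributes a trivializing direction: the quotient $K/K_H$ collapses to the Grassmannian $U(q;\K)/(U(q_1;\K)\times U(q_2;\K))$, and the $U(p;\K)$-action on $\frak{p}/\frak{p}_H$ is free of holonomy in the sense that it can be absorbed into the fiber.

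The central step is to verify that the associated bundle $K\times_{K_H}(\frak{p}/\frak{p}_H)$ depends only on how $U(q_1;\K)\times U(q_2;\K)$ acts on $\frak{p}/\frak{p}_H$, not on the full $K_H$. Concretely, I would decompose $\frak{p}$ and $\frak{p}_H$ into their $U(p;\K)$-isotypic pieces and check that the $U(p;\K)$-action on the quotient $\frak{p}/\frak{p}_H$ is by a representation that is constant along $K/K_H$. Then the standard reduction-of-structure-group argument gives
\[
K\times_{K_H}(\frak{p}/\frak{p}_H)\simeq \bigl(U(p;\K)\times U(q;\K)\bigr)\times_{U(p;\K)\times U(q_1;\K)\times U(q_2;\K)}(\frak{p}/\frak{p}_H),
\]
and since the $U(p;\K)$-factor acts identically on both sides of the fibered product, it cancels, yielding
\[
K\times_{K_H}(\frak{p}/\frak{p}_H)\simeq U(q;\K)\times_{U(q_1;\K)\times U(q_2;\K)}(\frak{p}/\frak{p}_H).
\]
I would phrase the cancellation as a map of associated bundles that is fiberwise the identity on $\frak{p}/\frak{p}_H$ and covers the projection $K/K_H\to U(q;\K)/(U(q_1;\K)\times U(q_2;\K))$, then check it is a well-defined bundle isomorphism.

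The $U(q)$-equivariance I would track throughout: the residual $U(q;\K)$-action on both total spaces is the natural left translation, and the identification map is by construction compatible with it, since the cancelled $U(p;\K)$-factor commutes with $U(q;\K)$. The main obstacle I anticipate is purely bookkeeping rather than conceptual: one must verify that the $U(p;\K)$-representation on the fiber $\frak{p}/\frak{p}_H$ is genuinely independent of the $U(q_1;\K)\times U(q_2;\K)$-position, i.e. that the block structure of $\frak{p}_H\subset\frak{p}$ splits cleanly along the $q=q_1+q_2$ partition. Keeping the three cases $\K=\R,\C,\Ha$ uniform adds notational weight but no new difficulty, since the block decomposition of $\frak{p}$ is formally identical in each case (with $\Herm$ and $\Sym$ replaced by their $\K$-analogues as in the realization already used for $Sp(n,\K)$). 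I would therefore carry out the argument once with $\K$ generic, citing \cite[Sect.~7.1]{kobayashiOshima} for the explicit matrix realizations.
```
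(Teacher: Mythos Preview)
Your approach is correct and is exactly the natural argument; the paper itself omits the proof entirely, writing only ``Since the proof is easy, we omit it.'' The essential point you identify---that $U(p;\K)$ is a common direct factor of both $K=U(p;\K)\times U(q;\K)$ and $K_H=U(p;\K)\times U(q_1;\K)\times U(q_2;\K)$, so it cancels from the associated-bundle construction via the map $[(k_1,k_2,v)]\mapsto[(k_2,k_1\cdot v)]$---is precisely what makes this ``easy,'' and your check of well-definedness and $U(q;\K)$-equivariance is the right verification.

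One small comment on presentation: phrases like ``free of holonomy'' and ``the $U(p;\K)$-representation on the fiber is independent of the $U(q_1;\K)\times U(q_2;\K)$-position'' slightly obscure the mechanism. The cancellation does \emph{not} require $U(p;\K)$ to act trivially on $\frak p/\frak p_H$ (it does not: it acts by left multiplication on $M(p,q_1;\K)$). What matters is only that $K_H$ is a genuine direct product $K_1\times L$ with $K_1=U(p;\K)$ a full factor of $K$; the general identity $(K_1\times K_2)\times_{K_1\times L}V\simeq K_2\times_L V$ then holds for any $K_1\times L$-module $V$, because the $K_1$- and $L$-actions commute. Stating it this way removes the ``bookkeeping obstacle'' you anticipate.
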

Since the proof is easy, we omit it.

\begin{proof}[Proof of Proposition~\ref{first Pont} (outline)]
Let $E$ be the tautological vector bundle of the Grassmannian manifold 
$U(q;\K)/U(q_1;\K)\times U(q_2;\K)$, that is,
\[E:=U(q;\K)\times_{U(q_1;\K)\times U(q_2;\K)} \K^{q_1}.   \]
From Lemma~\ref{lemma:vector_bundle}, we have a natural isomorphism 
$K\times _{K_H}\frak{p}/\frak{p}_H \simeq E^{\oplus p}$, 
hence $p_1(K\times _{K_H}\frak{p}/\frak{p}_H)=p\cdot p_1(E)$
by Whitney's sum formula. 
Therefore, it suffices to verify whether $p_1(E)$ vanishes or not. 

We consider only the case $\K=\C$. The other cases $\K=\R$ or $\Ha$ are similar. 
Since $E\otimes_\R \C$ is isomorphic to a complex vector bundle $E\oplus \overline{E}$, 
the Pontrjagin class of $E$ is determined by the Chern class of $E$, in particular, 
\[ p_1(E)=c_1(E)^2 -c_2(E). \]
Define invariant polynomials $c_k\in S^k(\frak{u}(n)^*)^{U(n)}$ by 
\[\det\left(\lambda I_n -\frac{X}{2\pi\sqrt{-1}}\right)=\lambda^n +c_1(X)\lambda^{n-1} +\cdots +c_n(X) \]
for $X\in \frak{u}(n)$. 
The normalization is given in such a way that $c_k$ yields the $k$-th Chern class. 
Then one has the algebra isomorphisms 
$S(\frak{k}^*)^K\simeq \R[c_1,\cdots, c_{q_1+q_2}]$ and 
$S(\frak{k}_H^*)^{K_H}\simeq \R[c_1,\cdots, q_1]\otimes \R[c_1,\cdots, q_2]$, 
and the restriction $S(\frak{k}^*)^K\to S(\frak{k}_H^*)^{K_H}$
is given by 
\[c_k\mapsto \sum_{i+j=k}c_i\otimes c_j.  \]
Via Chern--Weil homomorphism $\omega$, one has
\[c_1(E)^2-2c_2(E)=\omega ( (c_1^2-2c_2)\otimes 1). \]
One can easily check that $(c_1^2-2c_2)\otimes 1\not \in \ker \omega$ if and only if $p_1\geq 2$ or $p_2\geq 2$ by using Fact~\ref{fact of ker omega}.
\end{proof}

\begin{proof}[Proof of Proposition~\ref{Pont of SO(2p,2q)/SU(p,q)} (outline)]
Let $E$ be an associated bundle on $K/K_H=(SO(2p)\times SO(2q))/(U(p)\times U(q))$ defined by the tensor product of the natural representations, that is, 
\[ E:=K\times _{K_H} \C^p\otimes \C^q. \]
Then we have a $K$-invariant vector bundle isomorphism $K\times_{K_H}\frak{p}/\frak{p}_H\simeq E$. 
In the same way with the proof of Proposition~\ref{first Pont}, 
it is enough to verify whether $p_1(E)$ vanishes or not. 

Define invariant polynomials $p_k\in S^k(\frak{o}(n)^*)^{O(n)}$ by 
\begin{align*}
\det \left(\lambda I_n+\frac{1}{2\pi}X\right)&=\lambda^n +q_1(X)\lambda^{n-1}+\cdots + q_n(X),\\
p_k&=q_{2k}
\end{align*}
for $X\in \frak{o}(n)$. 
Moreover, we define an invariant polynomial $e_n\in S^n(\frak{o}(2n)^*)^{SO(2n)}$ by 
\[ e_n(X)=\frac{1}{2^n n!}\left(\frac{1}{2\pi}\right)^n \sum_{\sigma\in \frak{S}_{2n}}(\operatorname{sgn} \sigma) X_{\sigma(1)\sigma(2)}\cdots X_{\sigma(2n-1)\sigma(2n)}. \]
We define invariant polynomials $c_k\in S^k(\frak{u}(n)^*)^{U(n)}$ as in the proof of Proposition~\ref{first Pont}. 
Then we have 
\begin{align*}
S(\frak{k}^*)^K&\simeq (\R[p_1,\cdots, p_p]\oplus e_p \R[p_1,\cdots, p_p])\otimes (\R[p_1,\cdots, p_q]\oplus e_p \R[p_1,\cdots, p_q]), \\
S(\frak{k}_H^*)^{K_H}&\simeq \R[c_1,\cdots, c_p]\otimes \R[c_1,\cdots, c_q]. 
\end{align*}
Via Chern--Weil homomorphism, one has 
\begin{align*}
p_1(E)&=c_1(E)^2-2c_2(E)\\
&=\omega( (c_1\otimes 1+1\otimes c_1)(c_1\otimes 1+1\otimes c_1)-2(c_2\otimes 1+1\otimes c_2))\\
&=\omega(p_1\otimes 1+1\otimes p_1+2 c_1\otimes c_1). 
\end{align*}
One can easily check that $ p_1\otimes 1+ 1\otimes p_1 +2c_1\otimes c_1\not \in \ker \omega$ if and only if $p\geq 2$ by using Fact~\ref{fact of ker omega}. 
\end{proof}

\section{Applications of Adams' theorem}\label{section:adams}
Kobayashi--Yoshino \cite{kobayoshi} found a surprising relationship between the existence problem of compact Clifford--Klein forms of tangential homogeneous spaces and Adams' results \cite{adams} on vector fields on spheres. 
This section pursues their idea. 
\begin{proposition}[Kobayashi--Yoshino {\cite[Prop.~5.5.1]{kobayoshi}}, Yoshino \cite{yoshino}]\label{application_of_adams}
Let $(G, H)$ and $(G, H^a)$ be symmetric pairs which are locally isomorphic to one of the following list and suppose that $G$ is connected. 
Then neither $G_\theta/H_\theta$ nor $G_\theta/H^a_\theta$ 
admit compact Clifford--Klein forms. 
\begin{itemize}
\item $(G,H,H^a)=(SO_0(p,q+1),SO_0(p,q),SO_0(p,1)\times SO(q))$ for $q\geq \rho(p,\R)$, where $\rho(p,\R)$ denotes the Hurwitz--Radon number (see Definition~\ref{Adams thm}), 
\item $(G,H=H^a)=(SU(p,2),S(U(p,1)\times U(1)))$ $(p$ is odd$)$, 
\item $(G,H,H^a)=(SO^*(2(2p)),SO^*(2(2p-1))\times SO^*(2), U(2p-1,1))$ $(p\geq 3)$,
\item $(G,H,H^a)=(SO(2p,\C), SO(2p-1,\C), SO_0(2p-1,1))$ $(1\leq p$ and $p\neq 1,2,4)$.
\end{itemize}
\end{proposition}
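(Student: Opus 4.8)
The plan is to reduce every line to Fact~\ref{yoshino's criterion} and then to the Hurwitz--Radon--Adams bound on families of matrices all of whose nonzero linear combinations are nonsingular. Throughout, by Proposition~\ref{associated duality} it suffices to treat one member of each associated pair $(G,H)$, $(G,H^a)$, and I would always pick the representative for which $\frak{p}_H$ is easiest to describe. By Fact~\ref{yoshino's criterion}, $G_\theta/H_\theta$ admits a compact Clifford--Klein form if and only if there is a subspace $V\subset\frak{p}$ with $\dim V=d(G)-d(H)$ and $V\cap\Ad(K)\frak{p}_H=\{0\}$; so in each case the task is to show that no such $V$ exists.

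First I would fix the matrix model of $(\frak{p},K,\frak{p}_H)$ line by line. In each case $\frak{p}$ is a space of real/complex matrices, or of skew forms, with $K$ acting by the natural two-sided (or conjugation) action, and the orbit set $\Ad(K)\frak{p}_H$ is exactly the \emph{rank-deficient locus}: the non-injective matrices of rank $\le q$ in $M(p,q+1;\R)$ for the first line, the rank-$\le 1$ matrices in $M(p,2;\C)$ for $SU(p,2)$, and the degenerate (non-invertible) skew forms for $SO^*(4p)$ and $SO(2p,\C)$. Hence $V\cap\Ad(K)\frak{p}_H=\{0\}$ says precisely that every nonzero element of $V$ has maximal rank. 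Choosing a linear isomorphism $V\cong\R^{k}$ with $k=d(G)-d(H)$ turns such a $V$ into a nonsingular bilinear map, and forgetting the complex/skew structure yields a nonsingular bilinear map $\R^{k}\times\R^{n}\to\R^{n}$, where $n=\dim_\R$ of the relevant fibre. By Hurwitz--Radon--Eckmann and Adams \cite{adams} (equivalently, Definition~\ref{Adams thm} together with the fact that the maximal number of $n\times n$ real matrices with every nonzero combination invertible equals $\rho(n,\R)$), the existence of such a map forces $k\le\rho(n,\R)$.

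The proof is then finished by the arithmetic of $\rho$, using $\rho(n,\R)=n\iff n\in\{1,2,4,8\}$ and $\rho(n,\R)\ge 4\iff 4\mid n$. For the first line $n=p$, $k=q+1$, so existence forces $q+1\le\rho(p,\R)$; this is exactly Fact~\ref{intro_adams}, and nonexistence for $q\ge\rho(p,\R)$ follows. For $SO(2p,\C)$ one computes $d(G)-d(H)=2p-1$ with $n=2p$, forcing $2p-1\le\rho(2p,\R)$, which is impossible unless $\rho(2p,\R)=2p$, i.e. unless $p\in\{1,2,4\}$. For $SO^*(4p)$ one gets $d(G)-d(H)=4p-2$ with $n=4p$, forcing $4p-2\le\rho(4p,\R)$, which fails for every $p\ge 3$. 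For $SU(p,2)$ one gets $d(G)-d(H)=2p$ and a nonsingular bilinear map $\R^{2p}\times\R^{4}\to\R^{2p}$, so $n=2p$, $k=4$, and existence forces $4\le\rho(2p,\R)$, i.e. $4\mid 2p$, i.e. $p$ even; hence for $p$ odd no such $V$ exists. These thresholds match the stated ranges, and the excluded small values are consistent with the existence results in Table~1 (e.g. $SO^*(8)$ at $p=2$, $SO(8,\C)$ at $p=4$).

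I expect the main obstacle to be the second step: verifying in each matrix model that $\Ad(K)\frak{p}_H$ is genuinely the full rank-deficient (or degenerate-skew-form) locus, and bookkeeping the real dimensions so as to extract the correct pair $(k,n)$. In particular, for $SU(p,2)$ and $SO^*(4p)$ the $K$-action interlocks the complex structure with the rank condition, so one must check that forgetting this structure preserves nonsingularity; this is automatic, since injectivity/invertibility of the underlying real map is implied. The genuinely deep input, Adams' solution of the vector-fields-on-spheres problem, is invoked as a black box, and the remaining labour is this reduction together with the elementary computation of $\rho(2p,\R)$ and $\rho(4p,\R)$.
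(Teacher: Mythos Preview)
Your proposal is correct and follows precisely the line of argument of the references \cite{kobayoshi} and \cite{yoshino} to which the paper defers: reduce via Fact~\ref{yoshino's criterion} to finding a subspace of matrices (or skew forms) all of whose nonzero elements have maximal rank, reinterpret this as a nonsingular bilinear map, and invoke the Adams/Hurwitz--Radon bound. The identification of $\Ad(K)\frak{p}_H$ with the rank-deficient locus is accurate in all four cases, and the dimension counts $d(G)-d(H)=p,\;2p,\;4p-2,\;2p-1$ are correct.

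Two small points worth tightening. First, your symbol $k$ drifts: in the general paragraph you set $k=\dim V$, but in the case-by-case analysis (e.g.\ $SU(p,2)$) you use $k$ for the \emph{other} factor of the bilinear map. The argument is unaffected because in every case one factor has the same real dimension as the target, which is what makes the Adams bound applicable; it would be cleaner to state this explicitly. Second, the implications ``$2p-1\le\rho(2p,\R)\Rightarrow p\in\{1,2,4\}$'' and ``$4p-2\le\rho(4p,\R)\Rightarrow p\le 2$'' use not only $\rho(n,\R)=n\iff n\in\{1,2,4,8\}$ but also that $\rho(2p,\R)$ never equals $2p-1$ and $\rho(4p,\R)$ never equals $4p-2$ or $4p-1$; this follows from a short parity/congruence check on $8\alpha+2^\beta$, which you should include.
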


For the proof, 
see \cite{kobayoshi} for the first case and  \cite{yoshino} for the other cases. 

\begin{definition}[{\cite{adams,Hurwitz}}]\label{Adams thm}
For a positive integer $n$, 
when we write $n=2^k(2\ell+1)$, $k=4\alpha+\beta$ 
($k, \ell, \alpha, \beta\in \Z_{\geq 0}$, $0\leq \beta \leq 3$) uniquely, 
the Hurwitz--Radon number $\rho(n,\R)$ of $n$ is defined by 
\begin{align*}
\rho(n,\R)&:=8\alpha+2^\beta. 
\end{align*}
\end{definition}

\section*{Acknowledgments}
I would like to express my gratitude to my supervisor Taro Yoshino for constructive comments and warm encouragement. 
I would also like to thank Prof. Toshiyuki Kobayashi for giving me insightful advice. 
Moreover, I would also like to thank Yosuke Morita and Takayuki Okuda for valuable comments and discussions. 
I wish to thank the anonymous referee for the careful reading and helpful suggestions. 
This work was supported by the Program for Leading Graduate Schools, MEXT, Japan.

\end{document}